\newcommand{\TT}{\ensuremath{\mathsf{\tiny{T}}}}
\newcommand{\T}{^{\TT}}
\crefname{theorem}{theorem}{Theorems}
\Crefname{Theorem}{Theorem}{Theorems}
\newaliascnt{lemma}{theorem}
\crefname{lemma}{lemma}{lemmas}
\Crefname{Lemma}{Lemma}{Lemmas}
\newaliascnt{corollary}{theorem}
\crefname{corollary}{corollary}{corollaries}
\Crefname{Corollary}{Corollary}{Corollaries}
\newaliascnt{proposition}{theorem}
\crefname{proposition}{proposition}{propositions}
\Crefname{Proposition}{Proposition}{Propositions}
\newaliascnt{definition}{theorem}
\crefname{definition}{definition}{definitions}
\Crefname{Definition}{Definition}{Definitions}
\newaliascnt{remark}{theorem}
\crefname{remark}{remark}{remarks}
\Crefname{Remark}{Remark}{Remarks}
\crefname{example}{example}{examples}
\Crefname{Example}{Example}{Examples}
\crefname{figure}{figure}{figures}
\Crefname{Figure}{Figure}{Figures}
\newtheorem{assumption}{\textbf{A}\hspace{-3pt}}
\Crefname{assumption}{\textbf{A}\hspace{-3pt}}{\textbf{A}\hspace{-3pt}}
\crefname{assumption}{\textbf{A}}{\textbf{A}}
\title[Orthogonal Directions Constrained Gradient Method]{Orthogonal Directions Constrained Gradient Method: from non-linear equality constraints to Stiefel manifold}
\begin{document}

\maketitle

\begin{abstract}
We consider the problem of minimizing a non-convex function over a smooth manifold $\cM$. We propose a novel algorithm, the \emph{Orthogonal Directions Constrained Gradient Method} (\algo)  which only requires computing  a projection onto a vector space. \algo\ is infeasible but the iterates are constantly pulled towards the manifold, ensuring the convergence of \algo\ towards $\cM$. 
\algo\ is much simpler to implement than the classical methods which require the computation of a retraction.  Moreover, we show that \algo\ exhibits the  near-optimal oracle complexities $\cO(1/\varepsilon^2)$ and $\cO(1/\varepsilon^4)$ in the deterministic and stochastic cases, respectively. Furthermore, we establish that, under an appropriate choice of the projection metric, our method recovers the \texttt{landing} algorithm of \citet{ablin2022fast}, a recently introduced algorithm for optimization over the Stiefel manifold. As a result, we significantly extend the analysis of \citet{ablin2022fast}, establishing near-optimal rates both in deterministic and stochastic frameworks. Finally, we perform numerical experiments which shows the efficiency of \algo\ in a high-dimensional setting.

\begin{keywords}%
  constrained optimization, non-convex optimization, Riemannian optimization, stochastic optimization, Stiefel manifold
\end{keywords}
\end{abstract}

\section{Introduction}




Given a continuously differentiable function $f\colon \bbR^n \rightarrow \bbR$, we consider the following optimization problem:
\begin{equation}\label{eq:main_opt_prob}
  \min_{x \in \cM} f(x) , \quad \textrm{ with } \cM := \{x \in \bbR^n : h(x) = 0 \} \, ,
\end{equation}
where $h: \bbR^n \rightarrow \bbR^{n_h}$ is continuously differentiable, non-convex, $n_h>0$ represents the number of constraints and $\cM$ denotes the feasible set. Optimization problems with nonlinear constraints naturally arise in a number of areas in machine learning,  with a specific emphasis on matrix manifold optimization (see \cite{li2019orthogonal,yang2007globally,sato2021riemannian}). Examples include independent component analysis \citep{hyvarinen2009independent,ablin2018faster}, Procrustes estimation  \citep{bojanczyk1999procrustes,turaga2008statistical,turaga2011statistical}
and the orthogonally normalized neural networks in deep learning \citep{bengio_unitaryNN16,li2019orthogonal,bansal18,qi2020deep}.

When the projection to $\cM$ is computationally tractable, projected gradient method -- in which a gradient descent step on $f$ is combined with the projection to $\cM$ --  is often the preferred option. The convergence guarantees for projected gradient methods are similar to those for an unconstrained gradient descent. Moreover, projected gradients are a first-order procedure and efficiently handle the stochastic case where only one estimator of $\nabla f$ is known (see, e.g., \cite{gha_lan13, gha_lan16}). When $\cM$ is a submanifold, a typical approach is to determine a search direction in the tangent space and then apply a retraction (see e.g. \cite{absil2012projection, bonn_riem13,boumal2019global,boumal2020intromanifolds,sato2021riemannian}). Similarly, retraction-based gradient algorithms have optimal convergence rates in both deterministic and stochastic settings (see \cite{riem_first_zha16, riem_sto_zha19}). These methods are feasible, i.e., the iterates always belong to $\cM$. In most cases, however, computing the retraction is expensive and requires solving a nontrivial optimization problem.

Infeasible methods (i.e. the iterates do not remain on $\cM$) such as augmented Lagrangian and proximally guided methods seek a solution to~\eqref{eq:main_opt_prob} by solving a sequence of optimization problems (see \cite{zichong2020rate, lin_ma_xu22, xie_wright_PAL19, hong17a}). Here, the iterates are not  feasible but are gradually pushed towards $\cM$. Nevertheless, each of the optimization problems in the inner loop might be computationally involved. Moreover, these methods are sensitive to the choice of hyperparameters both in theory (often the sub-problems in the inner loop are required to be convex) and in practice.  

In this work, we propose \algo, which stands for \emph{Orthogonal Directions Constrained Gradient Method}, a new class of algorithms that are both easy to implement and computationally inexpensive while retaining the good convergence properties of gradient descent. \algo\ realizes a trade-off between two opposite goals: minimizing $f$ and guaranteeing feasibility  of solutions. In order to set up the stage, we define for each $x \in \bbR^n$ \emph{i)}  $\nabla H(x) := (1/2) \nabla (\norm{h(x)}^2)$, and \emph{ii)}  $V(x)$ the vector space  orthogonal  to $\operatorname{span}(\{\nabla h_i(x) \}_{i=1}^{n_h})$. Then, a vanilla version of \algo\ produces iterates as follows:
\begin{equation}\label{eq:alg_int} x_{k+1} = x_k - \gamma_k \nabla H(x_k) - \gamma_k \nabla_V f(x_k) \, ,
\end{equation}
where $\gamma_k > 0$ is a step size and $\nabla_V f$ denotes the orthogonal projection of $\nabla f$ onto $V(x)$. Since $\nabla H(x)$ is orthogonal to $V(x)$ by construction, the iterates, even if allowed to be infeasible, are constantly shifted in the direction of $\cM$. Moreover, $-\nabla_V f$ strives to be as close as possible to $-\nabla f$, which is the direction of descent for $f$, and thus tends to minimize $f$; see \Cref{fig:orth_dir}.

\begin{figure}[h]
  \centering{

  \resizebox{60mm}{!}{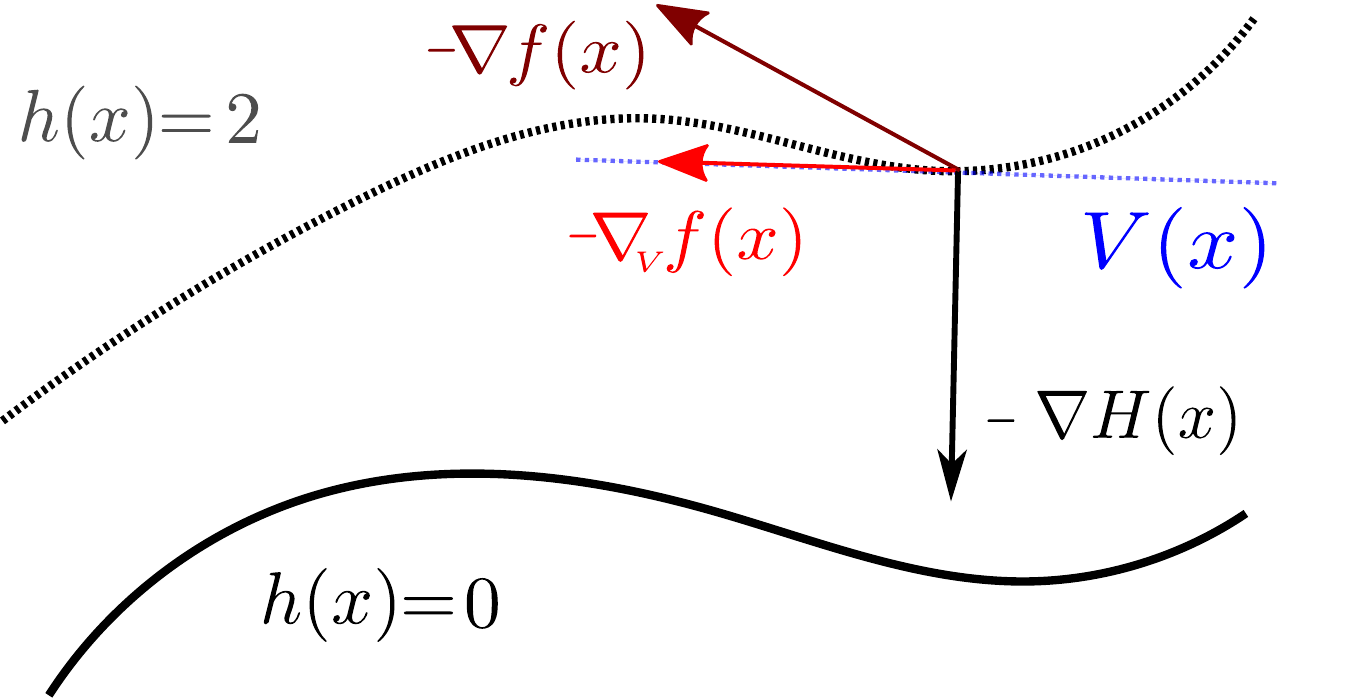}\hspace{30pt}
\resizebox{60mm}{!}{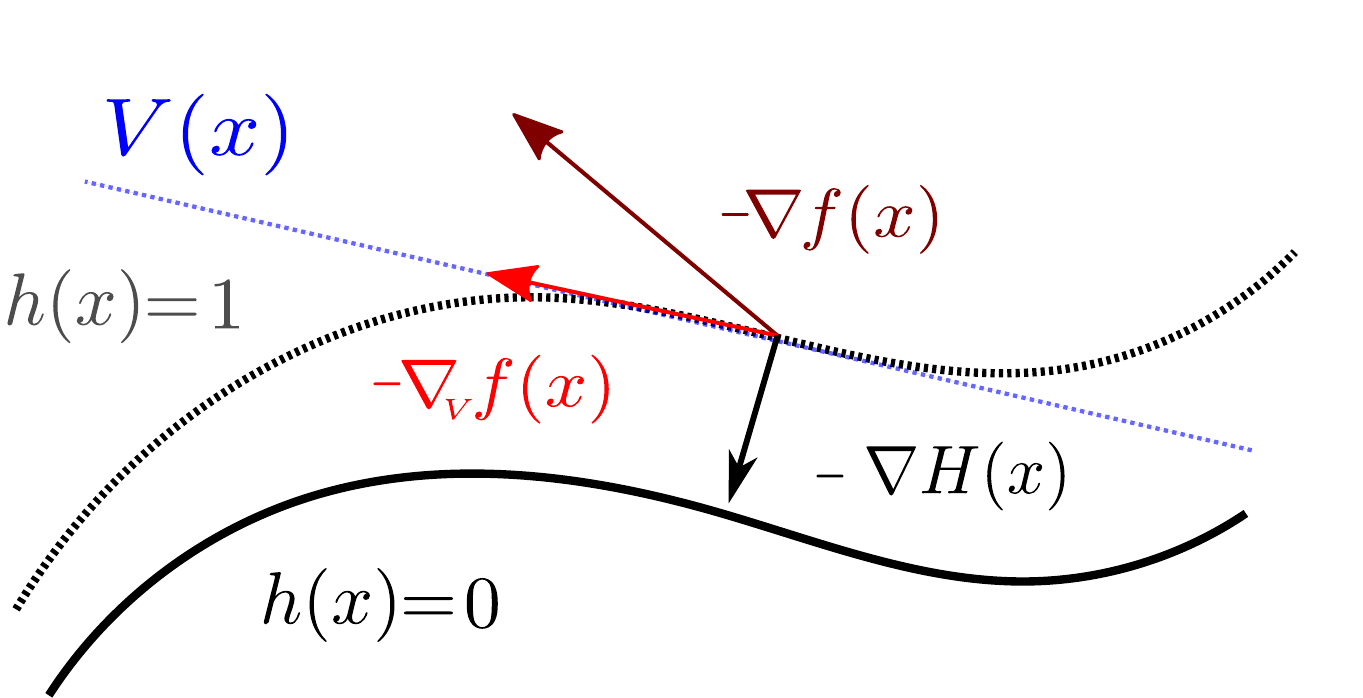} 
\caption{Construction of the orthogonal directions}\label{fig:orth_dir}
  }
\end{figure}

\algo\ is a first-order algorithm that only requires a projection onto the vector space $V(x)$ at each iteration. It is scalable, simple to implement, and can be easily generalized to the stochastic setting. In addition, we provide \algo\ with strong theoretical guarantees: we establish convergence bounds that are equivalent to those of (unconstrained) gradient descent in both deterministic and stochastic settings: $\cO(\varepsilon^{-2})$ and $\cO(\varepsilon^{-4})$, respectively. We also present \redalgo\, a computationally cheaper version of \algo\ where $V(x)$ is replaced by a hyperplane orthogonal to $\nabla H(x)$. The advantage \redalgo\ is that the projection (i.e. the computation of $\nabla_V f(x)$) now comes essentially for free, which has the potential to efficiently solve high-dimensional problems, $n-n_h \gg 1$.
This version of \algo\ is inherently non-smooth and we obtain a $\cO(\varepsilon^{-3})$ convergence rate in the deterministic setting and $\cO(\varepsilon^{-4})$ in the stochastic setting. 

\algo\ is closely related to two recently proposed methods. First, the algorithm developed in \citet{muehlebach2022constraints}, when applied to equality constraints, is a special instance of \algo. Our convergence finite-time complexity analysis extends \citet{muehlebach2022constraints} to the non-convex setting (see also \cite{stmjm_ifac22, askew_lsm22}). 
Second, \algo\ is closely related to the \texttt{landing} algorithm proposed by \citet{ablin2022fast,gao2022optimization}. The \texttt{landing} algorithm deals with the case where $\cM$ is the Stiefel (or orthogonal) manifold: it avoids retractions and requires only a few matrix multiplications at each iteration. For the orthogonal manifold case (and in the deterministic setting), \cite{ablin2022fast} provides convergence guarantees, however, with a suboptimal convergence rate. Following \cite{gao2022optimization}, we show that by choosing an appropriate metric for the projection on $V(x)$, we obtain a closed-form solution for $\nabla_V f$, and we recover \texttt{landing} as a specific instance of \algo. As a consequence, when $\cM$ is the Stiefel manifold, we significantly extend the analysis of \cite{ablin2022fast} by establishing near-optimal rates both in the deterministic and stochastic framework. In particular, we show that \texttt{landing} indeed converges to $\cM$, which was only conjectured in \cite{ablin2022fast}.\\
\vspace{5pt}
\noindent\textbf{Main contributions.}
\vspace{-10pt}
\begin{itemize}
    \item We propose \algo, a novel family of algorithms that do not require projections or retractions to the feasible set $\cM$.
    \item We establish convergence rates that coincide with the one of gradient descent in the non-convex setting: $\cO(\varepsilon^{-2})$ in the deterministic and $\cO(\varepsilon^{-4})$ in the stochastic cases; see \Cref{sec:Main}.
    \item We propose \redalgo\ which significantly decreases the computational cost per iteration. The cost of this computational reduction is a slightly degraded convergence rate: $\cO(\varepsilon^{-3})$ in the deterministic case and $\cO(\varepsilon^{-4})$ in the stochastic case; see \Cref{sec:reduced}.
    \item We introduce \geomalgo, a geometry-aware version of \algo, which is applicable when an underlying geometrical structure of the problem is available. In particular, the \texttt{landing} method of \cite{ablin2022fast} is a particular version of \geomalgo. Convergence guarantees of \geomalgo\ are identical to the one \algo; see \Cref{sec:stiefel}.
    \item We perform various numerical experiments on high-dimensional problems that highlight the claim on efficiency of our method; see \Cref{sec:numerics}.
\end{itemize}

\noindent\textbf{Notations.} For a smooth function $f : \bbR^n \rightarrow \bbR$, $\nabla f(x) \in \bbR^n$ denotes its gradient. For a smooth function $h: \bbR^n \rightarrow \bbR^{n_h}$, we denote $\nabla h(x) \in \bbR^{n \times n_h}$ the matrix in which the $i$-th column is $\nabla h_i(x)$. Given a matrix $A$, $\ker A$ denotes its kernel.
 Given a probability space $(\Omega, \cA, \bbP)$ and a filtration $(\mcF_k)$, $\bbE_k[\cdot]$ is denoted as $\bbE[\cdot | \mcF_k]$. $P_V$ denotes the orthogonal projector on the linear subspace $V$.

\noindent \textbf{Submanifolds.} A set $\cM \subset \bbR^n$ is called a submanifold of dimension $n-n_h$, with $n_h \leq n$, if for every point $x\in \cM$ there is a neighborhood $U \subset \bbR^n$ of $x$ and a smooth function $h: U \rightarrow \bbR^{n_h}$ such that $h^{-1}(0) = U \cap \cM$ and $\nabla h$ is of full rank on $U$. 
The tangent plane of $\cM$ at $x$ is $\cT_{x}\cM = \ker (\nabla h(x)^{\top})$.
For a smooth function $f : \bbR^n \rightarrow \bbR$ and $x \in \cM$, $\Grad f(x) = P_{\cT_x \cM} \nabla f(x)$ denotes the Riemannian gradient of $f$ at $x$ in the case when the Riemannian metric is inherited from the ambient space.
More generally, for  $(\cM, g)$ a manifold equipped with a Riemannian metric $g$, $\Grad_{\cM} f(x) \in \cT_x \cM$ denotes the Riemannian gradient: a vector in the tangent plane such that for any $\xi \in \cT_x \cM$,  $g_x(\Grad_{\cM} f(x), \xi) = \nabla f(x)^{\top} \xi$.

\section{Problem formulation and preliminaries}\label{Sec:pf}
We  consider submanifolds of $\bbR^n$ defined by a single function $h\colon \bbR^n \rightarrow \bbR^{n_h}$. A point $x \in \cM$ is a \emph{critical point} of~\eqref{eq:main_opt_prob} if:
\begin{equation}
\label{eq:definition-Grad}
  \Grad f(x) := P_{\cT_x \cM} \nabla f(x) = 0 \, .
\end{equation}
In particular, any local minimum of~\eqref{eq:main_opt_prob} is a critical point. To each $x \in \bbR^n$ we associate a vector space $V(x)= \{ v \in \bbR^n : \nabla h(x)^{\top} v = 0\}$. Note that, for any $x \in \cM$,  $V(x)= \cT_{x} \cM$. If $x$ is such that $\nabla h(x)$ has full rank, then $V(x)$ is the tangent plane of the manifold $\{ y \in \bbR^n : h(y) = h(x)\}$ (perhaps restricted to some neighborhood of $x$). Therefore, $V(x)$ extends the tangent plane outside of $\cM$.
The \emph{orthogonal directions field} is defined as:
\begin{align}\label{eq:orth_field}
\OF (x) = - \nabla h(x) A(x) h(x) - \nabla_V f(x)\, ,
\end{align}
where for all $x \in \bbR^n$, $\nabla_V f(x)$ is the orthogonal projection of $\nabla f(x)$ onto $V(x)$ and $A(x) \in \bbR^{n_h \times n_h}$ is chosen such that $\nabla h(x)^{\top}\nabla h(x) A(x)$ is a symmetric positive definite matrix. As we will see in the next sections, such an assumption enforces  that the directions along $\OF(x)$ tend to decrease $\norm{h}$. Note also that the term in $\nabla h(x)$ is orthogonal to $\nabla_V f(x)$ by construction. Before discussing the possible choices of $x \mapsto A(x)$, we show in the following lemma, that $\OF(x)$ is a meaningful way to measure the closeness of $x$ to a critical point. In particular, it is consistent with the notions of $\varepsilon$-1o point of \cite{xie_wright_PAL19} and $\varepsilon$-KKT point of \cite{birgin2018augmented,haeser2019optimality}. 
\begin{lemma}\label{lm:OF_crit}
For $x\in \bbR^n$ and $\varepsilon >0$, denote $\lambda$ the minimal singular value of $\nabla h(x) A(x)$. If $\norm{\OF(x)} \leq \varepsilon$, then $\norm{\nabla_V f(x)} \leq \varepsilon$ and $\norm{h(x)} \leq \lambda^{-1}\varepsilon $. In particular, if $\OF(x) = 0$, then $x$ is a critical point of Problem~\ref{eq:main_opt_prob}.
\end{lemma}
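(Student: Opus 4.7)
The plan is to exploit the orthogonality of the two terms that make up $\OF(x)$, followed by a standard singular-value lower bound.

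First I would observe that by definition $\nabla_V f(x) \in V(x) = \ker(\nabla h(x)^{\top})$, while the term $\nabla h(x) A(x) h(x)$ lies in the column span of $\nabla h(x)$. These two subspaces are orthogonal, so Pythagoras gives
\begin{equation*}
  \norm{\OF(x)}^2 = \norm{\nabla h(x) A(x) h(x)}^2 + \norm{\nabla_V f(x)}^2.
\end{equation*}
From the hypothesis $\norm{\OF(x)} \leq \varepsilon$ this immediately yields $\norm{\nabla_V f(x)} \leq \varepsilon$ and $\norm{\nabla h(x) A(x) h(x)} \leq \varepsilon$, handling the first claim.

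Next, I would bound $\norm{h(x)}$ using the minimal singular value $\lambda$ of $\nabla h(x) A(x)$. By the variational characterization of singular values, $\norm{\nabla h(x) A(x) v} \geq \lambda \norm{v}$ for every $v \in \bbR^{n_h}$, so applying this with $v = h(x)$ and combining with the previous bound gives $\lambda \norm{h(x)} \leq \norm{\nabla h(x) A(x) h(x)} \leq \varepsilon$, i.e. $\norm{h(x)} \leq \lambda^{-1}\varepsilon$. Here one needs $\lambda > 0$, which follows from the standing assumption that $\nabla h(x)^{\top} \nabla h(x) A(x)$ is symmetric positive definite: if $\nabla h(x) A(x) v = 0$ for some $v \neq 0$, then $\nabla h(x)^{\top} \nabla h(x) A(x) v = 0$, contradicting positive definiteness. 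Hence $\nabla h(x) A(x)$ is injective and $\lambda > 0$.

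For the last assertion, suppose $\OF(x) = 0$. Taking $\varepsilon = 0$ in the above, we obtain $h(x) = 0$ and $\nabla_V f(x) = 0$. Since $x \in \cM$, the relation $V(x) = \cT_x \cM$ holds, so the projection onto $V(x)$ coincides with the projection onto $\cT_x \cM$; therefore $\Grad f(x) = P_{\cT_x \cM} \nabla f(x) = \nabla_V f(x) = 0$, which by~\eqref{eq:definition-Grad} means $x$ is a critical point of~\eqref{eq:main_opt_prob}. There is essentially no obstacle: the only point that requires a small argument is extracting $\lambda > 0$ from the positive-definiteness assumption, and everything else is Pythagoras plus a singular-value inequality.
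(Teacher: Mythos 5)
Your proof is correct and follows exactly the intended route: the paper does not spell out a proof of this lemma, but the sentence immediately preceding it ("the term in $\nabla h(x)$ is orthogonal to $\nabla_V f(x)$ by construction") is precisely the Pythagorean decomposition you use, and the rest is the standard singular-value bound plus the identification $V(x) = \cT_x\cM$ on $\cM$. Your added remark that positive definiteness of $\nabla h(x)^{\top}\nabla h(x) A(x)$ forces $\lambda > 0$ is a correct and worthwhile detail that the statement leaves implicit.
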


\begin{example}[Vanilla orthogonal directions field]\label{ex:vanil}
The first natural example is to put $A(x) \equiv \alpha \cI$, where $\cI \in \bbR^{n_h \times n_h}$ is the identity matrix. In this case, denoting $H(x) = 1/2 \norm{h(x)}^2$, it holds that $\OF(x) = - \alpha \nabla H(x) - \nabla_V f(x)$. An adaptive version of the method is obtained by choosing $A(x) =\alpha(x) \cI$, with  $\alpha : \bbR^{n} \rightarrow \bbR_{+}$ a strictly positive function.
\end{example}

\begin{example}[MJ orthogonal directions field]\label{ex:mj_flow}
For $x$ such that $\nabla h(x)$ is of full rank, another natural example is obtained by setting
$A(x)  = \alpha (\nabla h(x)^{\top} \nabla h(x))^{-1}$, where $\alpha >0$.
In this case, it turns out that $\OF$ is an instance of \cite{muehlebach2022constraints}. Denote $V_{\alpha}(x) := \{v \in \bbR^n : \nabla h(x)^{\top} v = - \alpha h(x)\}$. Note that for $x \in \cM$, $V_{\alpha}(x) = V(x) = \cT_{x} \cM$ and that $V_{\alpha}(x)$ is non-empty as soon as $\nabla h(x)$ is of full rank. A direct calculation (see \Cref{lm:aff_proj}) shows that
  \begin{equation}\label{eq:MJ_flow}
    \OF(x) = \argmin_{v \in V_{\alpha}(x)}\frac{1}{2} \norm{v + \nabla f(x)}^2  \, .
  \end{equation}
Since the computational cost of the projection on $V_{\alpha}$ and $V$ is similar, it might be interesting to compute this vector field by directly solving~\eqref{eq:MJ_flow}. However, we will see in~\Cref{sec:stiefel} that for important examples of Stiefel and orthogonal manifolds we can modify the geometry of the ambient space to obtain a computationally tractable projection onto $V$.
\end{example}

 \section{Main results}\label{sec:Main}
 \subsection{Continuous-time flow}\label{subsec:cont_time}
 In this section, we analyze the ordinary differential equation $\dot{\sx}(t) = \OF(\sx(t))$.  In all the remainder, we fix $r_1 >0$ and $K \subset \bbR^{n}$ with $ K = \{ x \in \bbR^n : \norm{h(x)} \leq r_1\}$.
Consider the following assumption:
 \begin{assumption}\label{hyp:cont_model}
   \begin{enumerate}[label=\roman*), nosep,leftmargin=15pt]
     \item\label{hyp:k_comp} The set $K$ is compact and $\nabla h$ is of full rank on $K$.
     \item\label{hyp:k_fullr}  It holds that $\nabla h^{\top} \nabla h A \in \bbR^{n_h \times n_h}$ is symmetric positive definite on $K$.
     \item\label{hyp:A_loclip} The function $A : K \rightarrow \bbR^{n_h \times n_h}$ can be extended to a locally Lipschitz continuous function on some neighborhood of $K$.
     \item\label{hyp:hA_eigenv}  There is $\alpha_m >0$ such that  $\inf_{x \in K} \lambda_m(x) > \alpha_m$, where $\lambda_m(x)$ is the minimal eigenvalue of $\nabla h^{\top}(x) \nabla h(x) A(x)$
   \end{enumerate}
 \end{assumption}
Note that as soon as $\cM$ is compact, there is always some $r_1 > 0$ such that \Cref{hyp:cont_model}-\ref{hyp:k_comp} holds. Moreover, \Cref{hyp:cont_model}-\ref{hyp:k_fullr}--\ref{hyp:A_loclip} are satisfied for the matrices $A$ given in~\Cref{ex:vanil,ex:mj_flow}.   As is often the case, to analyze the trajectory of an ordinary differential equation we need to find an energy (or Lyapunov) function. For $M > 0$, we define $\Lambda_M :\bbR^{n} \rightarrow \bbR$ as:
\begin{equation}\label{eq:def_LambdaM} \Lambda_M = f + M \norm{h} \, .
\end{equation}
The following theorem is our first main result, it shows that for $M$ large enough, $\Lambda_M$ decreases along any trajectory. This observation immediately implies the convergence of any bounded trajectory to the set of critical points. 
\begin{theorem}\label{th:cont_time}
  Assume \Cref{hyp:cont_model}.
  For any $x_0$ such that $\norm{h(x_0)} \leq r_1$ there is $\sx:\bbR_{+} \rightarrow \bbR^n$ a unique solution to 
  \begin{equation}
      \label{eq:orth_flow}
      \dot{\sx}(t) = \OF(\sx(t))
  \end{equation}
  starting at $x_0$. In addition, it holds that:
  \begin{enumerate}[nosep]
    \item For any $t \geq 0$, $\norm{h(\sx(t))} \leq \rme^{-\alpha_m t} \norm{h(x_0)}$,  where $\alpha_m$ is defined in \Cref{hyp:cont_model}-\ref{hyp:hA_eigenv}.
  \item For all $M \geq \overline{M}= M_1/\alpha_m$, with $M_1 = \sup_{x \in K} \norm{A^{\top} \nabla h^{\top}(\nabla f - \nabla h A h)}$, we get
  \begin{equation*}
    \inf_{0 \leq t \leq T} \norm{\OF(\sx(t))}^2= \inf_{0 \leq t \leq T} \norm{\dot{\sx}(t)}^2 \leq \frac{1}{T} \int_{0}^{T} \norm{\dot{\sx}(t)}^2 \rmd t \leq \frac{\Lambda_{M}(\sx(0)) - \Lambda_{M}(\sx(T))}{T} \, .
  \end{equation*}
  \item Let $x^*$ be in the limit set of $\sx$, i.e. there is $t_n \rightarrow + \infty$ such that $\sx(t_n) \rightarrow x^*$. Then $x^*$ is a critical point of \eqref{eq:main_opt_prob}.
  \end{enumerate}
\end{theorem}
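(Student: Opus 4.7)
The plan is to tackle the three claims in sequence, with essentially all of the difficulty concentrated in a sharp Lyapunov calculation for Claim 2. Before anything else, I would check that $\OF$ is locally Lipschitz on a neighborhood of $K$: thanks to \Cref{hyp:cont_model}-\ref{hyp:k_comp}, the orthogonal projector $P_{V(x)} = I - \nabla h(x)(\nabla h(x)^\top \nabla h(x))^{-1}\nabla h(x)^\top$ is smooth in $x$ near $K$, and combined with \ref{hyp:A_loclip} this makes $\OF$ locally Lipschitz. Picard--Lindelöf then gives a unique maximal solution. To obtain Claim 1 and simultaneously extend the solution globally, I would compute along the trajectory
\[
\tfrac{d}{dt}\tfrac{1}{2}\|h(\sx)\|^2 = h^\top \nabla h^\top \dot{\sx} = -h^\top (\nabla h^\top \nabla h A)\, h,
\]
where the contribution of $\nabla_V f$ vanishes because $\nabla h^\top \nabla_V f = 0$. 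Assumption \ref{hyp:hA_eigenv} then yields $\tfrac{d}{dt}\|h\|^2 \leq -2\alpha_m\|h\|^2$, and Grönwall gives $\|h(\sx(t))\| \leq e^{-\alpha_m t}\|h(x_0)\|$; this forces $\sx(t)\in K$ for all $t\geq 0$, hence global existence.

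For Claim 2, I would separate the trivial case $h(x_0)=0$ (the trajectory stays on $\cM$, where $\nabla h A h = 0$ and $\dot{f} = -\|\nabla_V f\|^2 = -\|\OF\|^2$). When $h(x_0)\neq 0$, Claim 1 forces $\|h(\sx(t))\|>0$ for all finite $t$, so $\Lambda_M$ is smooth along the trajectory. Using the chain rule together with $\nabla h^\top \nabla_V f = 0$, I would obtain
\[
\dot{\Lambda}_M = -\nabla f^\top \nabla h A h - \|\nabla_V f\|^2 - M\,\frac{h^\top B h}{\|h\|}, \qquad B := \nabla h^\top \nabla h A.
\]
The key step, which is the main obstacle, is the algebraic identity
\[
\|\nabla h A h\|^2 - \nabla f^\top \nabla h A h = -\bigl(A^\top \nabla h^\top(\nabla f - \nabla h A h)\bigr)^\top h,
\]
derived by factoring $(\nabla h A h)^\top(\nabla h A h - \nabla f)$. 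Its right-hand side is bounded by $M_1\|h\|$ via Cauchy--Schwarz and the definition of $M_1$. Since $\|\OF\|^2 = \|\nabla h A h\|^2 + \|\nabla_V f\|^2$ by orthogonality, adding this to the descent identity gives
\[
\dot{\Lambda}_M + \|\OF(\sx)\|^2 \leq (M_1 - M\alpha_m)\|h\| \leq 0
\]
whenever $M \geq \overline{M} = M_1/\alpha_m$. Integrating on $[0,T]$ and using $\inf \leq \text{avg}$ produces Claim 2 verbatim.

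For Claim 3, compactness of $K$ and continuity of $\Lambda_M$ give $\int_0^\infty \|\OF(\sx(t))\|^2\,dt \leq \Lambda_M(x_0) - \inf_K \Lambda_M < \infty$. If $\sx(t_n)\to x^*$, Claim 1 forces $h(x^*)=0$, so $x^*\in\cM$. I would then close the argument by the standard contradiction: were $\OF(x^*)\neq 0$, continuity of $\OF$ would yield a ball around $x^*$ on which $\|\OF\|^2$ is bounded below, and uniform boundedness of $\dot{\sx}$ on $K$ would force each visit of $\sx$ to that ball to contribute a fixed positive mass to the integral, contradicting its finiteness. By \Cref{lm:OF_crit}, $\OF(x^*)=0$ identifies $x^*$ as a critical point.

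The delicate ingredient is the sharpness of the descent in Claim 2: the naive Cauchy--Schwarz bound $|\nabla f^\top \nabla h A h| \leq \|A^\top \nabla h^\top \nabla f\|\|h\|$ would give only a descent in $\|\nabla_V f\|^2$, which would control the first-order stationarity $\Grad f$ but not the constraint violation $h$. The trick of completing the square in the cross-term, which isolates the specific norm $\|A^\top \nabla h^\top(\nabla f - \nabla h A h)\|$ defining $M_1$, is what upgrades the right-hand side to the full $\|\OF\|^2$ and, ultimately, what makes the subsequent discrete-time rates in Section 3 come out near-optimal.
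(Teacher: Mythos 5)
Your proof is correct and, for the existence/uniqueness argument, Claim 1, and the Lyapunov computation of Claim 2, it is essentially identical to the paper's: your identity $\norm{\nabla h A h}^2 - \nabla f^{\top}\nabla h A h = -\bigl(A^{\top}\nabla h^{\top}(\nabla f - \nabla h A h)\bigr)^{\top} h$ is exactly the paper's bound on $(\OF+\nabla f)^{\top}\OF$ in~\eqref{eq:err_f}, just derived by direct factoring rather than via the projection formula of \Cref{lm:aff_proj}. Two small remarks. First, for Claim 3 you replace the paper's appeal to LaSalle's invariance principle with a self-contained argument (finiteness of $\int_0^{\infty}\norm{\OF(\sx(t))}^2\,\rmd t$ plus uniform boundedness of $\dot\sx$ on $K$ forcing each visit to a neighborhood of a non-critical limit point to contribute fixed mass); this is a standard and valid alternative, and arguably more elementary. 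Second, your justification that $h(x_0)\neq 0$ implies $\norm{h(\sx(t))}>0$ for all finite $t$ does not follow from Claim 1, which is only an \emph{upper} bound; it follows instead from the reverse Gr\"onwall inequality $\frac{\dif}{\dif t}\norm{h}^2 \geq -2\lambda_{\max}\norm{h}^2$ (with $\lambda_{\max}$ the largest eigenvalue of $\nabla h^{\top}\nabla h A$ on $K$), or from backward uniqueness for the scalar ODE satisfied by $\norm{h(\sx)}^2$. Your care in handling the non-smoothness of $\Lambda_M$ on $\cM$ is a point where you are actually more explicit than the paper.
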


\begin{proof}
 The existence and uniqueness of a local solution of \eqref{eq:orth_flow} follows from the fact that $\OF$ is locally Lipschitz continuous. As we shall see, such a solution must lie in $K$, which is compact by \Cref{hyp:cont_model}. This implies that the domain of a local solution can be extended to $\bbR_{+}$. Indeed, let $\sx$ be such a solution. Since for all $v \in V$, it holds that $\nabla h^{\top} v = 0$, we get using \Cref{hyp:cont_model}-\ref{hyp:hA_eigenv}:
\begin{equation}\label{eq:h_decrease_intm}
\frac{\dif}{ \dif t} \norm{h(\sx)}^2 = - 2 h^{\top}(\sx) \nabla h^{\top}(\sx) \nabla h(\sx) A(\sx) h(\sx)  \leq -2 \alpha_m \norm{h(\sx)}^2 \, ,
\end{equation}
 and Grönwall's lemma implies that $\norm{h(\sx(t))} \leq \rme^{-\alpha_m t} \norm{h(\sx(0))} $, for $t \geq 0$.
 Therefore, any local solution stays away from the boundary of $K$ and can be extended to a global solution for which the first claim holds. We now prove the second claim. Denote $D_h = (\nabla h^{\top} \nabla h)^{-1}$. In order to simplify the notations we omit the dependence on $x$ (see Lemma~\ref{lm:aff_proj}), and get
\begin{equation}\label{eq:interm_OFA}
\OF= - \nabla f +  \nabla h \left(  D_h \nabla h^{\top} \nabla f - A h \right)  \, ,
\end{equation}
where $D_h := (\nabla h^{\top} \nabla h)^{-1}$. This implies $\nabla h^{\top} \OF = - \nabla h^{\top} \nabla hA h$. Therefore, we have
\begin{align}\label{eq:err_f}
    \begin{split}
      \norm{(\OF + \nabla f)^{\top} \OF} &= \norm{\left(  D_h\nabla h^{\top} \nabla f - A h \right)^{\top} \nabla h^{\top} \OF}
      \\
      &\leq \norm{ h^{\top} A^{\top} \nabla h^{\top} \nabla h Ah - \nabla f^{\top} \nabla h A h}  \leq M_1 \norm{h} \, .
    \end{split}
\end{align}
Finally, if $\sx \not \in \cM$, we have
\begin{equation}\label{eq:f_decrease}
  \frac{\dif}{\dif t}f(\sx) = \nabla f(\sx)^{\top} \dot{\sx} = - \norm{\dot{\sx}}^2 + (\dot{\sx} + \nabla f(\sx))^{\top}\dot{\sx}(t) \leq - \norm{\dot{\sx}}^2 + M_1 \norm{h(\sx)} \, .
\end{equation}
Therefore, using~\eqref{eq:h_decrease_intm} and \eqref{eq:f_decrease} we obtain
\begin{equation}\label{eq:strict_lyap}
  \frac{\dif}{\dif t} \Lambda_M(\sx) \leq - \norm{\dot{\sx}}^2  \leq - \norm{\nabla_{V}f(\sx)}^2\, ,
\end{equation}
where the last inequality comes from the fact that the projection of $\dot{\sx}(t)$ onto $V$ is  $\nabla_V f$.
Integrating the last inequality we obtain the second claim for $\sx$.

To establish the third claim, we notice that $\OF \neq 0$ as soon as $x \notin \cM$ or $x \in \cM$ and $\Grad (f) \neq 0$. Equation~\eqref{eq:strict_lyap} then shows that $\Lambda_M$ is a strict Lyapunov function for the ODE~\eqref{eq:orth_flow} and the set of critical points of \eqref{eq:main_opt_prob}. In particular, LaSalle's invariance principle (see e.g. \cite[Theorem 2.17]{har_dynsyst91}) then implies that any limit point of $\sx$ must be contained in the set of critical points of \eqref{eq:main_opt_prob}.
\end{proof}
\vspace{-10pt}
\subsection{Algorithm}\label{sec:det_alg}
In this section we analyze the algorithms provided by the discretization of ODE~\eqref{eq:orth_flow} both in the deterministic and stochastic settings.
  Consider a filtered probability space $(\Omega, \mcF, \{\mcF_k, k >0\},  \bbP)$. Fix $x_0 \in K$ and let $(\eta_{k})_{k \geq 1}$ be a sequence of random variables adapted to $(\mcF_k)$. Our method, \algo, produces iterates as follows:
 \begin{equation}\label{eq:orth_alg}
     x_{k+1} = x_k + \gamma_k v_k + \gamma_k \eta_{k+1} , \quad{} \textrm{ with } v_k = \OF(x_k)
 \end{equation} 
 and with $(\gamma_k)$ a sequence of positive step sizes. The perturbation $(\eta_k)$ allows to capture the case where $\nabla f(x)$ (and hence $\nabla_V f(x)$) is unknown. This covers both streaming data and finite-sum problems in machine learning; see \citep{lan2020first}.
Recall that $\bbE_k$ denotes the conditional expectation given $\mcF_k$ and consider the following assumptions.
\begin{assumption}\label{hyp:disc_model}
\begin{enumerate}[label=\roman*), nosep]
    \item\label{hyp:fh_Lipgrad}The function $f$ (respectively $h$) has $L_f$ (respectively $L_h$) Lipschitz gradients on $K$.
    \item\label{hyp:iter_bound}The iterates $(x_k)$ remain in $K$, $\bbP$-almost surely.
    \item\label{hyp:zer_mean} For every $k \in \bbN$, it holds that $\eta_{k+1} \in V(x_k)$ and $\bbE_k[\eta_{k+1}] = 0$.
    \item\label{hyp:var_bound} There is a constant $\sigma \geq 0$ such that for all $k \in \bbN$, $\bbE_k[\norm{\eta_{k+1}}^2] \leq \sigma^2$.
\end{enumerate}
\end{assumption}
 \begin{example}\label{ex:SA}
 In the stochastic approximation framework, it is assumed that there is a probability space $(\Xi, \mcT, \mu)$ and a $\mu$-integrable function $g: \bbR^n \times \Xi \rightarrow \bbR^n$ such that for each $x \in \bbR^n$, $\int g(x, s) \mu(\rmd s)  = \nabla f(x)$. Let $(\xi_k)_{k \geq 1}$ be a sequence of i.i.d random variables defined on $(\Omega, \mcF, \bbP)$, taking values in $\Xi$ and such that the distribution of $\xi_k$ is $\mu$. We consider the following recursion
 \begin{equation*} 
 x_{k+1} = x_k - \gamma_k \nabla h(x_k) A(x_k) h(x_k) - \gamma_k g_V(x_k, \xi_{k+1}) \, , \end{equation*}
where $g_V(x, \xi)$ denotes the orthogonal projection of $g(x, \xi)$ onto $V(x)$. Thus, if we denote $\eta_{k+1} :=\nabla_V f(x_k) - g_V(x_k, \xi_{k+1})$ and $\mcF_k:= \sigma(\xi_1, \dots, \xi_k)$, we obtain \eqref{eq:orth_alg}. Note also that in this case $\eta_{k+1} \in V(x_k)$, $\bbE_k[\eta_{k+1}] = 0$, and if for some $\sigma >0$, it holds that $\sup_{x \in \bbR^n} \bbE[\norm{g(x,\xi) - \nabla f(x)}^2] \leq \sigma^2$, then $\bbE_k[\norm{\eta_{k+1}}^2] \leq \sigma^2$. 
 \end{example}
The deterministic setting is recovered by setting $\sigma = 0$. If $A$ is defined only on $K$ (see  \Cref{ex:mj_flow}), then \Cref{hyp:disc_model}-\ref{hyp:iter_bound} is required for the recursions to be properly defined. However, for $A$ as in~\Cref{ex:vanil}, this assumption is not needed. Nevertheless, it is necessary for our convergence analysis, and we show in \Cref{proof:safe_step}, that, under mild assumptions, if the step-sizes are small enough \Cref{hyp:disc_model}-\ref{hyp:iter_bound} is automatically satisfied.

The following theorem is the discrete counterpart of \Cref{th:cont_time}. It shows that \algo\ converges to the set of the critical points essentially at the same rate than (unconstrained) gradient descent. 

\begin{theorem}\label{th:gen_rates}
  Assume \Cref{hyp:cont_model}--\ref{hyp:disc_model}. For any $M \geq \overline{M}$, where $\overline{M}$ is defined in \Cref{th:cont_time}, denote $D_M := \Lambda_M(x_0) - \inf_{x \in K} \Lambda_M(x)$ and let $\gamma \leq \gamma_{\max}= \min\left(\alpha_m^{-1}, (L_f + M L_h)^{-1} \right)$. Then, the following holds.
  \begin{enumerate}[nosep,leftmargin=15pt]
      \item If $\sigma = 0$, and for all $k$, $\gamma_k \equiv \gamma$, then:
      \begin{equation}\label{eq:det_rates}
  \inf_{ 0 \leq k \leq N-1}  \norm{\OF(x_k)}^2= \inf_{0\leq k \leq N-1} \norm{v_k}^2 \leq \frac{2 D_M}{N\gamma} \, .
  \end{equation}
   Furthermore, it holds that $\OF(x_k) \rightarrow 0$ and any accumulation point $x^*$ of $(x_k)$ is a critical point of Problem~\eqref{eq:main_opt_prob}.
  \item Otherwise, fix some constant $\bar D >0$, $N >0$ and $\gamma := \min(\gamma_{\max}, \bar D(\sigma \sqrt{N})^{-1})$. If $\gamma_k \equiv \gamma$, and $\hat k$ is uniformly sampled in $\{0, \dots, N-1\}$, then:
  \begin{equation}\label{eq:sto_rate}
  \bbE\left[\norm{\OF(x_{\hat k})}^2\right] \leq \frac{2D_M(L_f + M L_h+ \alpha_m)}{N} + \frac{\sigma}{\sqrt{N}}\left( \bar D (L_f + M L_h) + \frac{2 D_M}{\bar D}\right) \, .
  \end{equation}
  \end{enumerate}
\end{theorem}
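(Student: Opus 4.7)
The plan is to mirror the continuous-time argument of \Cref{th:cont_time} by establishing a discrete-time descent inequality for the same Lyapunov function $\Lambda_M = f + M\|h\|$. I would write $w_k := v_k + \eta_{k+1}$ so that $x_{k+1} = x_k + \gamma w_k$, and apply the descent lemma to $f$ together with the second-order Taylor expansion to $h$, both valid on the compact set $K$ by \Cref{hyp:disc_model}-\ref{hyp:fh_Lipgrad}. The crucial identity is $\nabla h(x_k)^\top w_k = \nabla h(x_k)^\top v_k = -B_k h(x_k)$, where $B_k := \nabla h(x_k)^\top \nabla h(x_k) A(x_k)$, since $\eta_{k+1} \in \ker \nabla h(x_k)^\top$ by \Cref{hyp:disc_model}-\ref{hyp:zer_mean}. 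Therefore $h(x_{k+1}) = (I - \gamma B_k)h(x_k) + R_k$ with $\|R_k\| \leq \tfrac{L_h \gamma^2}{2}\|w_k\|^2$, and under the prescribed step-size constraint the spectrum of $I - \gamma B_k$ is bounded in operator norm by $1 - \gamma\alpha_m$ (recall $B_k$ is symmetric positive definite with minimum eigenvalue $\geq \alpha_m$), yielding the sharp contraction
\begin{equation}\label{eq:pf_h_contract}
\|h(x_{k+1})\| \leq (1-\gamma\alpha_m)\|h(x_k)\| + \tfrac{L_h\gamma^2}{2}\|w_k\|^2.
\end{equation}
For $f$ I would decompose $\nabla f(x_k)^\top v_k = -\|v_k\|^2 + (\nabla f(x_k)+v_k)^\top v_k$ and invoke \eqref{eq:err_f} to bound the last term by $M_1\|h(x_k)\|$, then take conditional expectation (which kills the linear-in-$\eta$ term by \Cref{hyp:disc_model}-\ref{hyp:zer_mean} and produces $\bbE_k\|w_k\|^2 \leq \|v_k\|^2 + \sigma^2$). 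Adding the $f$-descent to $M$ times \eqref{eq:pf_h_contract} yields
\begin{equation*}
\bbE_k[\Lambda_M(x_{k+1})] - \Lambda_M(x_k) \leq -\gamma\|v_k\|^2 + \gamma(M_1 - M\alpha_m)\|h(x_k)\| + \tfrac{(L_f+ML_h)\gamma^2}{2}(\|v_k\|^2+\sigma^2),
\end{equation*}
so $M \geq \bar M = M_1/\alpha_m$ kills the $\|h\|$-term exactly as in the continuous proof and $\gamma \leq (L_f+ML_h)^{-1}$ absorbs half of the $\|v_k\|^2$ contribution into the descent, producing
\begin{equation}\label{eq:pf_lyap_disc}
\bbE_k[\Lambda_M(x_{k+1})] \leq \Lambda_M(x_k) - \tfrac{\gamma}{2}\|v_k\|^2 + \tfrac{(L_f+ML_h)\gamma^2}{2}\sigma^2.
\end{equation}

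Both assertions then follow by standard telescoping. Summing \eqref{eq:pf_lyap_disc} from $k = 0$ to $N-1$, taking total expectation, and using that $\Lambda_M$ is bounded below on $K$ produces $\tfrac{\gamma}{2}\sum_{k=0}^{N-1}\bbE\|v_k\|^2 \leq D_M + \tfrac{N(L_f+ML_h)\gamma^2}{2}\sigma^2$. When $\sigma=0$ this yields \eqref{eq:det_rates} via $\inf_k\|v_k\|^2 \leq N^{-1}\sum_k \|v_k\|^2$; moreover summability $\sum\|v_k\|^2 < \infty$ forces $\|v_k\| \to 0$, so by continuity of $\OF$ together with \Cref{lm:OF_crit} every cluster point of $(x_k)$ is a critical point of \eqref{eq:main_opt_prob}. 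For $\sigma>0$, sampling $\hat k$ uniformly in $\{0,\ldots,N-1\}$ gives $\bbE\|v_{\hat k}\|^2 \leq 2D_M/(\gamma N) + (L_f+ML_h)\gamma\sigma^2$, and inserting $\gamma = \min(\gamma_{\max}, \bar D/(\sigma\sqrt N))$ together with the elementary bounds $\gamma^{-1} \leq \gamma_{\max}^{-1} + \sigma\sqrt N/\bar D$ and $\gamma_{\max}^{-1} = \max(\alpha_m, L_f+ML_h) \leq \alpha_m + L_f + ML_h$ matches \eqref{eq:sto_rate} term by term.

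The delicate step is the sharp contraction \eqref{eq:pf_h_contract}. A direct discrete version of $\tfrac{\dif}{\dif t}\|h\|^2 \leq -2\alpha_m\|h\|^2$ only yields $\bbE_k\|h(x_{k+1})\|^2 \leq (1-2\gamma\alpha_m)\|h(x_k)\|^2 + O(\gamma^2\|w_k\|^2)$, from which extracting a bound on $\|h\|$ via $\sqrt{a+b}\leq\sqrt a + \sqrt b$ would introduce an $O(\gamma\|w_k\|)$ perturbation too large to be absorbed into the $-\gamma\|v_k\|^2$ descent. Working directly with the operator-norm estimate on $I-\gamma B_k$ is what keeps the remainder at $O(\gamma^2\|w_k\|^2)$ and allows the $f$-descent and the $\|h\|$-decrease to combine under a single step-size condition; this is the key discrete manipulation beyond the continuous-time template and is what makes the rate match classical gradient descent.
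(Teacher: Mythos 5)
Your proposal is correct and follows essentially the same route as the paper: the same Lyapunov function $\Lambda_M = f + M\|h\|$, the same decomposition $\nabla f^\top v_k = -\|v_k\|^2 + (\nabla f + v_k)^\top v_k$ controlled by \eqref{eq:err_f}, the same operator-norm contraction $\|(I-\gamma B_k)h(x_k)\| \le (1-\gamma\alpha_m)\|h(x_k)\|$ using $\eta_{k+1}\in V(x_k)$, and the same telescoping and step-size bookkeeping (the paper's Proposition~\ref{prop:lyap_stoch} and Corollary~\ref{cor:lyap_telesc}). Your closing remark correctly identifies why one must work with $\|h\|$ rather than $\|h\|^2$, which is exactly the choice the paper makes.
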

\begin{proof}
Using a Taylor expansion of $\Lambda_M$ and using the upper-bound on $\gamma_k$, we obtain
    \begin{equation}\label{eq:rem_decr}
    2\left(\bbE_k[\Lambda_M(x_{k+1})] - \Lambda_M(x_k)\right) \leq - \gamma\norm{v_k}^2 + L_f + M L_h \sigma^2 \gamma^2 \, .
  \end{equation}
  Our claims then follow by telescoping this inequality and applying a standard proof technique (see e.g. \citet[Chapter~6]{lan2020first}) both in the deterministic and stochastic framework. Further details are given in \Cref{proof:gen_rates}.
\end{proof}

The preceding theorem shows that the rate of convergence of our algorithm, measured through $\OF$, is identical to the one obtained by gradient descent in a non-convex framework: $\cO(\varepsilon^{-2})$ in the deterministic setting and $\cO(\varepsilon^{-4})$ in the stochastic setting. As recently shown in \cite{CarmonLowerBF, CarmonLowerBF_sto}, these rates are tight, which makes our algorithm near-optimal in both cases.
  
   The term $(L_f + M L_h)$ in the definition of $\gamma_{\max}$ is  the Lipschitz constant of $\nabla f + M \nabla h$, hence our bound on the step sizes is reminiscent of the $L_f^{-1}$ bound required for convergence of standard gradient descent.
Note also that only an upper bound on $\overline{M}$ is required to achieve such rates. Indeed, in the deterministic setting, we can combine our method with line search; see \Cref{rm:safe_step}.
In the stochastic framework, performing line search is not an option, but we note that the discussion of \citet[Corollary~2.2.]{gha_lan13} applies here as well. In particular, we can make an error of the order of $\sqrt{N}$ in estimating $(L_f + M L_h)$ while maintaining our rate of convergence of $\cO(\varepsilon^{-4})$. If all constants are known, then the optimal $\overline{D}$ in equation~\eqref{eq:sto_rate} is $\sqrt{2D_M/(L_f + M L_h)}$. Finally, a nonconstant choice of step sizes is possible without affecting the final results; see \cite[Chapter~6]{lan2020first}. The choice of step size is further discussed in \Cref{proof:safe_step}.

\section{Reducing the computational costs: reduced \algo}\label{sec:reduced}
While \algo\ provides optimal theoretical guarantees, it does so by computing, at every iteration, a projection onto a $n-n_h$-dimensional vector space. For $n -n_h \gg 1$, such a projection might be computational expensive. In this section, we therefore propose a modification of \algo\ that only projects onto a hyperplane, which comes essentially for free. The main idea is to reparametrize our problem by noting that $\cM =\{ x \in \bbR^n: H(x) := \norm{h(x)}^2 / 2 = 0\}$.
Introducing the vector spaces $\tilde{V}(x) := \{v\in \bbR^n : \nabla H(x)^{\top} v = 0 \}$, the iterates of \redalgo\ are defined as follows:
\begin{equation}
  x_{k+1} = x_k - \alpha(x_k) \gamma_k \nabla H(x_k) - \gamma_k \nabla_{\tilde V}f(x_k) + \gamma_k \eta_{k+1}\, ,
\end{equation}
where, as previously, $\nabla_{\tilde V} f(x)$ denotes the projection of $\nabla f(x)$ onto $\tilde V(x)$, $(\eta_{k+1})$ is a perturbation sequence, and $\alpha(x)$ corresponds to the choice $A(x) = \alpha(x)\cI$ and is specified in \Cref{th:reduced_rates} below.

Note that, as soon as $\nabla H(x) \neq 0$, $\tilde{V}(x)$ is a hyperplane. Therefore, the computation of $\nabla_{\tilde V} f(x)$ is straightforward, preserving, at the same time, its orthogonality to $\nabla H(x)$. Thus, \redalgo\ follows the same idea as \algo\, while significantly reducing the computational costs. Unfortunately, this construction damages the continuity of $\tilde{V}(x)$ near $\cM$. Indeed, since $\nabla H(x) = \nabla h(x) \cdot h(x)$, we obtain $\nabla H(x) = 0$ and $\tilde{V}(x)= \bbR^n$ on $\cM$. This observation shows that the field associated with \redalgo\ is non-smooth. The inherent non-smoothness of \redalgo\ deteriorates its convergence properties, but we can still derive a $\cO(\epsilon^{-3})$ rate of convergence in deterministic environments and a $\cO(\varepsilon^{-4})$ rate of convergence in stochastic environments. The latter is reminiscent of the convergence rate of subgradient methods in non-smooth environments (see \citet{dav_dru_weakconv_rate19}). 

To properly analyze \redalgo, and due to a non-smooth choice of $\alpha(x)$, we consider assumptions that are slightly different from \Cref{hyp:cont_model}. More precisely, we assume \Cref{hyp:cont_model} for $A(x) = \cI$. We will call this set of assumptions \Cref{hyp:cont_model}', and we denote the smallest eigenvalue of $\nabla h(x)^\top \nabla h(x)$ as $\mu_h^2$. 

We note that the compactness of $K$ and Lipchitz-continuity of $\nabla f$ and $\nabla h$ (\Cref{hyp:disc_model}-\ref{hyp:fh_Lipgrad}) implies that $f$, $h$, and $\nabla H = \nabla h \cdot h$ are Lipschitz-continuous with Lipchitz constants $C_f$, $C_h$, and $L_H$ respectively. Moreover, since $\nabla h$ is continuous and $K$ is compact, we have $\sup_{x \in K}\norm{\nabla h(x)}_2 \leq M_h$. 



\begin{theorem}\label{th:reduced_rates}
    Assume \Cref{hyp:cont_model}'-\ref{hyp:disc_model}. Let $\bar D, \alpha >0$ and $\alpha(x) = \alpha \cdot H(x) / \norm{\nabla H(x)}^2$. Denote $D_0 = f(x_0) - \inf_{x \in K} f(x)$, $\gamma_{\max} = \min(\alpha^{-1}, (L_f + \alpha \mu_h^{-2} L_H)^{-1})$ and $\tilde C = B_f M_h \mu_h^{-2}$. Finally, assume that $x_0 \in \cM$ and fix $N>0$ the number of iterations. The following holds:
     \begin{enumerate}
        \item If $\sigma^2=0$  and for all $k \in \bbN: \gamma_k \equiv \gamma$ for $\gamma = \min(\gamma_{\max}, \bar D \cdot N^{-1/3})$, then choosing $\alpha = \gamma$, we obtain
        \[
            \inf_{k=0,\ldots,N-1} \left\{\norm{\nabla_{\tV} f(x_k)}^2 + \frac{1}{2} \norm{h(x_k)}^2\right\} \leq \frac{8D_0 (L_f + L_h \mu_h^{-2})}{N} + \frac{\left(\frac{8 D_0}{\bar D} + 8 \widetilde{C} L_H \cdot \bar D \right)}{N^{2/3}}\,.
        \]
        \item Otherwise, if for all $k \in \bbN: \gamma_k \equiv \gamma$, with $\gamma = \min(\gamma_{\max}, \bar D \cdot N^{-1/2})$, we obtain by choosing $\alpha = \gamma$ and $\hat k$ uniformly sampled in $\{0,\ldots, N-1\}$
        \begin{align*}
            \bbE\left[\norm{\nabla_{\tV} f(x_{\hat k})}^2 + \frac{1}{2}\norm{h(x_{\hat k})}^2\right] &\leq \frac{4D_0 (L_f + L_h \mu_h^{-2})}{N} + \frac{4 D_0}{\bar D \cdot \sqrt{N}}  +  \frac{4 \widetilde{C}^2 \bar D^2 \cdot L_H}{N}\\
            &+ \frac{\bar D}{\sqrt{N}}\left( 2\left(L_f + \gamma L_H \mu_h^{-2}  \right) \cdot \sigma^2 + 2 \widetilde{C} \cdot \sqrt{ \frac{L_H \sigma^2}{2}} \right)\, .
        \end{align*}
    \end{enumerate}
\end{theorem}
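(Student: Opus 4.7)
The plan is to handle the descents in $f$ and $H$ separately and then couple them via a Cauchy--Schwarz step. The main obstruction -- which is what forces the suboptimal $\cO(N^{-2/3})$ rate in the deterministic case -- is a cross-term in the $f$-descent that is only linear in $\|h\|$, whereas the $H$-descent naturally controls only $\|h\|^2$. Preliminary geometric bounds tame the non-smoothness of $\alpha(x) = \alpha H(x)/\|\nabla H(x)\|^2$ near $\cM$: since $\nabla H = \nabla h \cdot h$ and the smallest eigenvalue of $\nabla h^\top \nabla h$ on $K$ is $\mu_h^2$, we have $\|\nabla H(x)\|^2 \geq 2\mu_h^2 H(x)$, whence $\alpha(x) \leq \alpha/(2\mu_h^2)$ and
\[
  \alpha(x)^2 \|\nabla H(x)\|^2 \leq \tfrac{\alpha^2}{2\mu_h^2} H(x),
  \qquad
  \bigl|\alpha(x)\, \nabla f(x)^\top \nabla H(x)\bigr| \leq \tfrac{\alpha \widetilde{C}}{2}\|h(x)\|,
\]
using $\|\nabla f\| \leq B_f$ and $\|\nabla h\|_2 \leq M_h$.

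Next I would Taylor-expand $f$ and $H$ separately along the iterates. Using $\nabla_{\tV} f(x_k) \perp \nabla H(x_k)$ and $\eta_{k+1} \in V(x_k) \subset \tV(x_k)$, so that $\eta_{k+1} \perp \nabla H(x_k)$, the relevant cross terms cancel. Setting $\alpha = \gamma$ and using the step-size constraint $\gamma(L_f + \gamma\mu_h^{-2}L_H) \leq 1$ inherited from $\gamma_{\max}$, the $L_f$-smoothness of $f$ yields, after taking conditional expectation,
\[
  \bbE_k[f(x_{k+1})] - f(x_k) \leq -\tfrac{\gamma}{2}\|\nabla_{\tV} f(x_k)\|^2 + \tfrac{\gamma^2 \widetilde{C}}{2}\|h(x_k)\| + \tfrac{L_f \gamma^2 \sigma^2}{2} + \text{(terms of order $\gamma^4 H(x_k)$)},
\]
while the identity $\nabla H(x_k)^\top(x_{k+1}-x_k) = -\gamma^2 H(x_k)$ combined with the $L_H$-smoothness of $H$ gives, for $\gamma^2 \leq 2\mu_h^2/L_H$,
\[
  \bbE_k[H(x_{k+1})] \leq \bigl(1 - \tfrac{\gamma^2}{2}\bigr) H(x_k) + \tfrac{L_H \gamma^2}{2}\bigl(\|\nabla_{\tV} f(x_k)\|^2 + \sigma^2\bigr).
\]

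Finally I would couple the two inequalities. Telescoping the $H$-descent with $H(x_0) = 0$ gives $\sum_k \bbE[H(x_k)] \leq L_H \sum_k \bbE\|\nabla_{\tV} f(x_k)\|^2 + N L_H \sigma^2$, and Cauchy--Schwarz produces $\sum_k \bbE\|h(x_k)\| \leq \sqrt{2 N \sum_k \bbE[H(x_k)]}$. Plugging this into the telescoped $f$-descent and using $x_0 \in \cM$ so that $f(x_0) - \inf_{K} f = D_0$, I obtain a quadratic inequality in $u = \sqrt{\bar A}$ with $\bar A = \frac{1}{N}\sum_k \bbE\|\nabla_{\tV} f(x_k)\|^2$,
\[
  u^2 \leq \frac{2 D_0}{N\gamma} + \gamma \widetilde{C}\sqrt{2 L_H}\, u + (\text{noise terms}),
\]
whose solution, combined with $(a+b)^2 \leq 2a^2 + 2b^2$, yields $\bar A \leq \frac{4 D_0}{N\gamma} + 4\gamma^2 L_H \widetilde{C}^2 + (\text{noise})$. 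With the separate bound $\bar H \leq L_H \bar A + L_H \sigma^2$, the crude inequality $\min_k\{\|\nabla_{\tV} f(x_k)\|^2 + H(x_k)\} \leq \bar A + \bar H$ produces the theorem's rate, up to the stated constants. Optimising $\gamma = \min(\gamma_{\max}, \bar D N^{-1/3})$ balances $D_0/(N\gamma)$ against $\gamma^2$ and yields the deterministic $\cO(N^{-2/3})$ rate, while $\gamma = \bar D N^{-1/2}$ balances against the $\sigma^2$ contributions in the stochastic case. The main obstacle is exactly the $\gamma^2 \widetilde{C}\|h(x_k)\|/2$ term: because it is linear in $\|h\|$ while the $H$-descent only controls $\sum \|h\|^2$, the Cauchy--Schwarz step costs a factor $\sqrt{N}$, and this is precisely what degrades the usual $\cO(N^{-1/2})$ rate into $\cO(N^{-2/3})$.
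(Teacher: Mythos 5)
Your proposal follows essentially the same route as the paper's proof: Taylor-expand $f$ and $H$ separately, use the orthogonality $\nabla_{\tV} f \perp \nabla H$ and the identity $\nabla H(x_k)^\top v_k = -\alpha H(x_k)$ to isolate the Lagrange-multiplier cross term linear in $\norm{h}$, bound $\sum_k H(x_k)$ by $\sum_k \norm{v_k}^2$ via the telescoped $H$-recursion (the paper's \Cref{lm:reduced_constrain_rate}), apply Cauchy--Schwarz at the cost of a $\sqrt{N}$ factor, and solve the resulting quadratic inequality in $\sqrt{\bar A}$ before tuning $\gamma \sim N^{-1/3}$ (resp.\ $N^{-1/2}$). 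You also correctly identify the linear-in-$\norm{h}$ cross term as the precise source of the degraded $\cO(N^{-2/3})$ rate, which is exactly the paper's own diagnosis.
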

The main difficulty in establishing this result relies in the lack of a suitable Lyapunov function for \redalgo. The latter comes from its inherent non-smoothness and the fact that the Lagrange multipliers that arise in the problem of projection on $\tV$ are unbounded. A complete proof of this theorem is provided in \Cref{app:reduced_proofs}.

In the deterministic setting, \redalgo\ outputs $\hat x$ such that $\norm{h(\hat x)} \leq \varepsilon$ and $\norm{\nabla_{\tV} f(\hat x)} \leq \varepsilon$ in $\cO(\varepsilon^{-3})$ iterations. In the stochastic setting, \redalgo\ outputs a point $\hat x = x_{\hat k}$ such that $\bbE[\norm{h(\hat x)}] \leq \varepsilon$ and $\bbE[\norm{\nabla_{\tV} f(\hat x)}] \leq \varepsilon$ in $\cO(\varepsilon^{-4})$ iterations. One drawback of such a method is that we are no longer guaranteed to converge towards the feasible set. Nevertheless, the condition $\norm{\nabla_{\tV} f(\hat x)} \leq \varepsilon$, could be rewritten as $\varepsilon$-1o point with appropriate Lagrange multipliers proportional to $h(\hat x)$ (see \cite{xie_wright_PAL19} for the definition of an $\varepsilon$-1o point). 
\section{A geometry aware version of \algo}\label{sec:stiefel}
As mentioned earlier, a drawback of \algo\ lies in the fact that at each iteration the method evaluates a projection on $V(x)$.  \redalgo\ requires only one projection onto a hyperplane but does not exhibit optimal convergence guarantees. In fact, since the main feature of our analysis was to exploit the orthogonality of $\nabla h(x)$ and $V(x)$, one might think that the projection onto $V(x)$ (and thus $\nabla_V f(x)$) is not necessarily defined through the canonical metric. This observation is the main idea behind our \textit{Orthogonal Directions Riemannian Gradient Method} (\geomalgo), where the type of projection might depend on $x$. This implicitly provides the ambient space with a Riemannian metric and turns out to be particularly interesting for optimization over the Stiefel manifold. In fact, by a specific choice of metric, the projection has a closed form which recovers the \texttt{landing} algorithm of \citet{ablin2022fast}.  In particular, our results imply near-optimal rates of \texttt{landing}, significantly improving the ones presented in \cite{ablin2022fast}.

Before proceeding, let us introduce some notations. Let $Q : \bbR^n \rightarrow \bbR^{n \times n}$ be such that for all $x \in \bbR^n$, $Q(x)$ is a positive definite matrix. Given $v, u \in \bbR^n$, we set $q_x(u,v) = u^{\top} Q(x)v$. As a result, we change the geometry of $\bbR^n$ and transform it into a Riemannian manifold with $q_x$ as the Riemannian inner product. For $v \in \bbR^n$, we will denote $\norm{v}_{q_x} =\sqrt{ q_x(v,v)}$.
We are now ready to present the \emph{geometry-aware orthogonal directions field}
\begin{equation}\label{eq:geometric_algorithm}
  \GOF(x) = - \nabla h(x) A(x) h(x) + \argmin_{v \in V(x)} \frac{1}{2} \norm{ v + Q^{-1}(x)\nabla f(x)}^2_{q_x} \, .
\end{equation}
By replacing $\OF$ with $\GOF$ in the algorithms of \Cref{sec:Main}, we obtain geometry-aware deterministic and stochastic algorithms. 

To describe a more geometric viewpoint on this algorithm, let us define a family of manifolds $\cM_{h(x)} = \{ y \in \bbR^n : h(y) = h(x) \}$ with Riemannian metric $g^{h(x)}$ such that $g^{h(x)}_x = q_x$ parameterized by $x \in \bbR^n$. In this case, we can prove that the projection in \eqref{eq:geometric_algorithm} exactly corresponds to the negative Riemannian gradient (see Lemma~\ref{lem:riemannian_gradient_by_projection} in Appendix~\ref{app:geometric}):
\begin{equation}\label{eq:riemannian_gradient_by_projections}
    -\Grad_{\cM_{h(x)}} f(x) = \argmin_{v \in V(x)} \frac{1}{2} \norm{ v + Q^{-1}(x)\nabla f(x)}^2_{q_x} \, .
\end{equation}
In particular, if the problem at hand has a geometrical structure, one might hope that a particular choice of $Q$ might reduce the computational costs (or even exhibit a closed form solution) of the right-hand side of \eqref{eq:riemannian_gradient_by_projections}. This idea explains the ``geometry-aware" nature of the algorithm. 

The main motivation for \geomalgo\ is the example of the orthogonal, or, more generally, Stiefel manifold. In this case, for $X \in \bbR^{n \times p}$, following the recent work of \citet{gao2022optimization}, the constraints are defined by $h(X) = X^\top X - \cI$ and the manifolds $\cM_{h(X)}$ correspond to $\mathrm{St}_{X^\top X}(p,n)$. For any of such $\cM_{h(X)}$, we obtain a natural Riemannian metric inherited from the Stiefel manifold $\mathrm{St}(p,n)$ through a family of diffeomorphisms. This provides us with a natural way of defining $Q$ and we obtain (see \cite{gao2022optimization} for a detailed discussion):
\[
    \Grad_{\cM_{h(X)}} f(X)  = \psi(X) X, \quad \text{where } \psi(X) = \left(\nabla f(X) X^\top - X (\nabla f(X))^\top\right).
\]
In particular, by setting $A(x) = \lambda \cI$, our algorithm exactly recovers the \texttt{landing} algorithm \citep{ablin2022fast,gao2022optimization}
\[
    X_{k+1} = X_k - \lambda \gamma_k \nabla H(X_k) - \gamma_k \psi(X_k) X_k.
\]
In other words, our approach is a generalization of the \texttt{landing} algorithm beyond the orthogonal and Stiefel manifolds.

Next we analyze \geomalgo\ under the following assumption. 
\begin{assumption}\label{hyp:riem_proj}
  There is a constant $C_q >0$ such that
  \begin{equation*}
    \sup_{x \in K}\max(\norm{Q^{-1}(x)}, \norm{Q(x)}) \leq C_q \, .
  \end{equation*}
\end{assumption}
The following theorem shows that \geomalgo\ exhibits the same type of rates than \algo. We emphasize that all our analysis automatically holds for the \texttt{landing} algorithms as a special case. In particular, we obtain new and better rates for \texttt{landing}, where only an $\cO(\epsilon^{-6})$ rate was previously proven for the deterministic (and with decreasing step-sizes) version of the algorithm. Furthermore, we establish the convergence of the deterministic version of \texttt{landing} to the Stiefel manifold, which was only conjectured in \cite{ablin2022fast}. A full proof is provided in \Cref{app:geometric}.
\begin{theorem}\label{th:stief_rates}
  Let \Cref{hyp:cont_model}--\ref{hyp:riem_proj} hold. Then there exists $\overline{M}_q$ (detailed in the proof) such that for all $M \geq \overline{M}_q$, denoting $\gamma_{\max}>0$ as $\gamma_{\max} = \min(\alpha_m^{-1}, C_q^{-1}(L_f + M L_h)^{-1})$, the following holds:
  \begin{enumerate}
      \item If $\sigma = 0$ and $\gamma_k \equiv \gamma$, with $\gamma \leq \gamma_{\max}$, then:
        \begin{equation*}
\inf_{0 \leq k \leq N-1} \norm{\GOF(x_k)}^2 \leq    \inf_{0 \leq k \leq N-1}{\norm{v_k}^2} \leq 2 C_q\frac{\Lambda_M(x_0) - \Lambda_{M}(x_n)}{N \gamma} \, .
  \end{equation*}
  Furthermore, $\norm{\GOF(x_k)} \rightarrow 0$ and any accumulation point of $(x_k)$ is a critical point of Problem~\ref{eq:main_opt_prob}.
  \item Otherwise, fix some constant $\bar D$, $N >0$ and $\gamma = \min(\gamma_{\max}, \bar D (\sigma \sqrt{N})^{-1})$. If $\gamma_k \equiv \gamma $ and $\hat k$ is uniformly sampled in $\{0, \dots, N-1\}$, then 
  \begin{equation*}
      \bbE_k [\norm{\GOF(x_{\hat k})}] \leq \frac{2C_qD_M(L_f + M L_h+ \alpha_m)}{N} + \frac{C_q\sigma}{\sqrt{N}}\left( \bar D (L_f + M L_h) + \frac{2 D_M}{\bar D}\right) \, .
  \end{equation*}
  \end{enumerate}

\end{theorem}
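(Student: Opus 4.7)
The plan is to follow the same structure as the proofs of \Cref{th:cont_time} and \Cref{th:gen_rates}, simply tracking how the change of metric $Q(x)$ affects each step, with the constant $C_q$ appearing as an overall factor. The first step is to obtain an explicit form of $\GOF(x)$. Writing the Lagrangian of the minimization in \eqref{eq:geometric_algorithm} with multiplier $\lambda$ for the constraint $\nabla h(x)^\top v = 0$ and zeroing the derivative in $v$, I find
\[
\GOF(x) = -\nabla h(x) A(x) h(x) - Q^{-1}\nabla f + Q^{-1}\nabla h (\nabla h^\top Q^{-1}\nabla h)^{-1}\nabla h^\top Q^{-1}\nabla f\,,
\]
and by \eqref{eq:riemannian_gradient_by_projections} the tangential piece equals $-\Grad_{\cM_{h(x)}}f(x)$. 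The crucial identity to extract is the same as in the Euclidean case: $\nabla h(x)^\top \GOF(x) = -\nabla h(x)^\top \nabla h(x) A(x) h(x)$, so Grönwall together with \Cref{hyp:cont_model}-\ref{hyp:hA_eigenv} yields the exponential decrease $\norm{h(\sx(t))} \leq \rme^{-\alpha_m t}\norm{h(x_0)}$ along the continuous flow exactly as in \eqref{eq:h_decrease_intm}.

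For the descent on $f$, I exploit the defining property of the Riemannian gradient: $\nabla f(x)^\top \Grad_{\cM_{h(x)}}f(x) = q_x(\Grad_{\cM_{h(x)}}f,\Grad_{\cM_{h(x)}}f) = \norm{\Grad_{\cM_{h(x)}}f}_{q_x}^2$. By \Cref{hyp:riem_proj} this is at least $C_q^{-1}\norm{\Grad_{\cM_{h(x)}}f}^2$. Bounding $|\nabla f^\top \nabla h A h| \leq \tilde M_1\norm{h}$ on $K$, I get $\tfrac{d}{dt}f(\sx) \leq -C_q^{-1}\norm{\Grad_{\cM_{h(x)}}f(\sx)}^2 + \tilde M_1\norm{h(\sx)}$. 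Combined with the exponential decrease of $\norm{h}$ and the inequality $\norm{\GOF}^2 \leq 2\norm{\nabla h A h}^2 + 2 C_q\norm{\Grad_{\cM_{h(x)}}f}_{q_x}^2$, choosing $M$ larger than a suitable $\overline{M}_q$ (absorbing $\tilde M_1$ and $\sup_K\norm{\nabla h A h}^2/\norm{h}$ into the $-M\alpha_m\norm{h}$ budget) produces the strict Lyapunov inequality $\tfrac{d}{dt}\Lambda_M(\sx) \leq -(2C_q)^{-1}\norm{\dot\sx}^2$, which by LaSalle yields the limit-set claim in the deterministic continuous case.

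For the discrete statements, I apply a Taylor expansion of $\Lambda_M$ along the iterates as in \eqref{eq:rem_decr}, handling the noise via \Cref{hyp:disc_model}-\ref{hyp:zer_mean}--\ref{hyp:var_bound} and the fact that $\eta_{k+1} \in V(x_k)$ is zero-mean. With $\gamma \leq \gamma_{\max} = \min(\alpha_m^{-1}, C_q^{-1}(L_f+ML_h)^{-1})$ the second-order terms are controlled, producing $2(\bbE_k[\Lambda_M(x_{k+1})]-\Lambda_M(x_k)) \leq -C_q^{-1}\gamma\norm{v_k}^2 + (L_f+ML_h)\sigma^2\gamma^2$. Telescoping and applying the standard tuning of $\gamma$ from \citet[Chapter~6]{lan2020first} then produces both claimed rates; the extra factor $C_q$ simply multiplies the gradient-descent bound. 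The step I expect to be the main obstacle is choosing $\overline{M}_q$ in Step 2: under the non-Euclidean geometry the normal component $-\nabla h A h$ and the tangential component $-\Grad_{\cM_{h(x)}}f$ of $\GOF$ are $q_x$-orthogonal but \emph{not} Euclidean-orthogonal, so expanding $\norm{\GOF}^2$ in the ambient norm produces cross terms that must be absorbed into the $-M\alpha_m\norm{h}$ contribution; checking that the resulting $\overline{M}_q$ depends only on the compact set $K$, on $C_q$, and on the bounds already present in \Cref{hyp:cont_model} is the core technical verification required.
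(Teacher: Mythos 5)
Your argument is correct and lands on the paper's bounds, but the key descent estimate is organized differently from the paper's. The paper (Proposition~\ref{prop:gof_expression}, Lemma~\ref{lm:f+v_riem}, Proposition~\ref{prop:lyap_sto_geom}) writes $\nabla f^\top v_k = (Q^{-1}\nabla f)^\top Q v_k = -v_k^\top Q v_k + (Q^{-1}\nabla f + v_k)^\top Q v_k$, so the full negative term $-C_q^{-1}\norm{v_k}^2$ appears at once and only the cross term needs bounding, uniformly by $M_q\norm{h}$, whence $\overline{M}_q = M_q/\alpha_m$. You instead split $v_k$ into its normal and tangential parts, use the Riemannian-gradient identity $\nabla f^\top \Grad_{\cM_{h(x)}}f = \norm{\Grad_{\cM_{h(x)}}f}_{q_x}^2 \geq C_q^{-1}\norm{\Grad_{\cM_{h(x)}}f}^2$ on the tangential part, and then must convert $-\norm{\Grad_{\cM_{h(x)}}f}^2$ back into $-\norm{v_k}^2 + \norm{\nabla h(x) A(x) h(x)}^2$ and absorb $\norm{\nabla h A h}^2 \leq \bigl(\sup_K \norm{\nabla h A}^2\, r_1\bigr)\norm{h}$ into the $-M\alpha_m\norm{h}$ budget; this produces a different but equally legitimate $\overline{M}_q$ and the same final constants. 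One correction: the ``main obstacle'' you flag does not exist, and you have the orthogonality backwards. The tangential component $-\Grad_{\cM_{h(x)}}f$ lies in $V(x) = \ker(\nabla h(x)^\top)$, which is the \emph{Euclidean} orthogonal complement of $\operatorname{span}(\{\nabla h_i(x)\})$, so $\norm{\GOF(x)}^2 = \norm{\nabla h A h}^2 + \norm{\Grad_{\cM_{h(x)}}f}^2$ holds exactly with no cross term; it is the $q_x$-orthogonality of the two pieces that generally fails. Your factor-of-two bound is therefore valid but unnecessary. Finally, note that the continuous-time portion of your write-up is motivation only: the theorem's claims concern the discrete iterates and are carried entirely by your last paragraph, which matches the paper's Proposition~\ref{prop:lyap_sto_geom} followed by the telescoping argument of Theorem~\ref{th:gen_rates}.
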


\section{Numerical experiments}\label{sec:numerics}
We showcase the efficiency of the proposed algorithms on different optimization problems.
\paragraph{Procrustes problem}
Let $A,B$ be matrices with $A \in \bbR^{q \times q}$ and $B \in \bbR^{p \times q}$, where $p \geq q$. We consider the orthogonal Procrustes problem of finding a matrix $X \in \bbR^{p \times q}$ with orthonormal columns solving the minimization problem
$\min _{X^{\top} X= \cI_q}  \left\|A X-B\right\|^2_{\mathrm{F}}$,
where $\|\cdot\|_{\mathrm{F}}$ is the Frobenius norm. This is referred to as the Procrustes problem on the Stiefel manifold; see \citep{elden1999procrustes}.
We compare \algo, \redalgo, \geomalgo\ with Riemannian gradient descent with two different choices of Riemannian metric: Euclidean and Canonical. The results are shown in \Cref{fig:procrustes_small_scale} in log-log scale for $p=60, q=40$. The results are averaged over $n=100$ draws for the matrices $A$ and $B$ [the entries of the matrices are sampled from a standard normal distributions]. For this experiment, we choose $A(x) = 5 \cI$; we use a constant step size $\gamma_k = 10^{-2}$ for \algo\ and \geomalgo, and decreasing step size for \redalgo\ $\gamma_k = 10^{-2} \cdot k^{-1/3}$.
\begin{figure}
    \centering
    \includegraphics[width=0.98\linewidth]{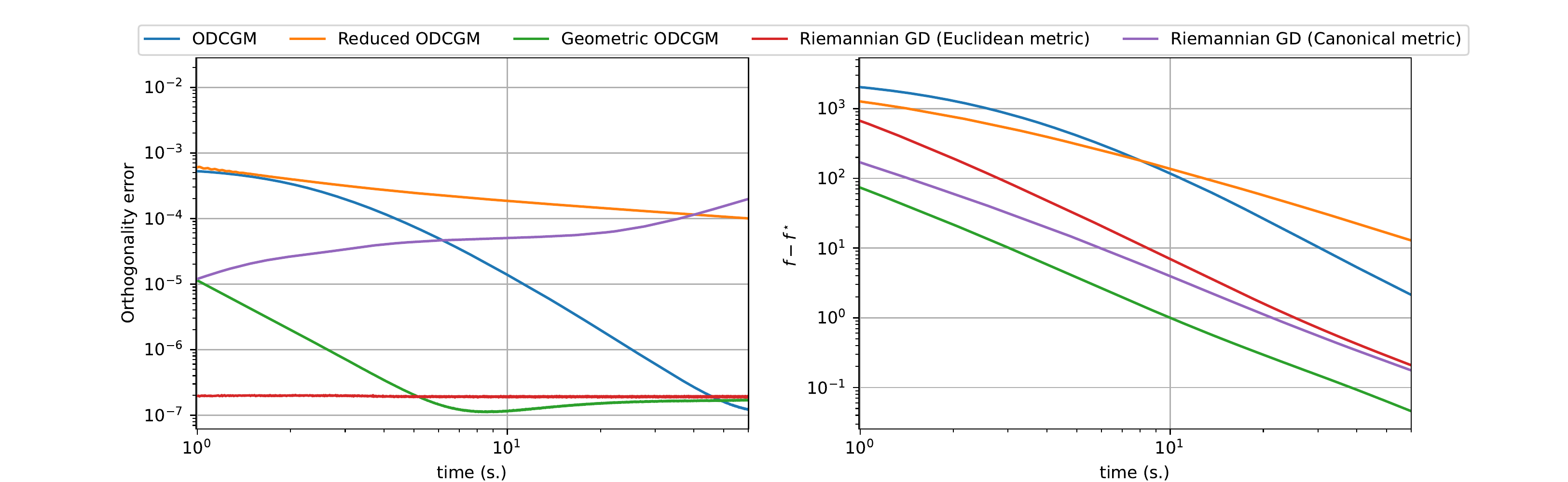}
    \caption{Comparison of \algo\ (blue), \redalgo\ (orange), and \geomalgo\ (green) with Riemannian gradient descent with two different Riemannian metrics (red and purple). The upper plot shows the orthogonality error for $X$, the lower plot shows the convergence of the objective function (averages over 100 seeds).}
    \label{fig:procrustes_small_scale}
\end{figure}
In particular, we find that \geomalgo\ outperforms the Riemannian gradient descent methods for both the Euclidean and canonical Riemannian metrics, and achieves the orthogonality error at the level of machine accuracy. We also see numerical confirmation of the $\cO(\varepsilon^{-2})$ convergence of \algo\ and \geomalgo\ and the slower convergence of \redalgo. Additional experiments on a large instance of the problem are presented in \Cref{app:numerics}.
\paragraph{Hanging chain}
As a second non-convex and nonlinear example, we compute the shape of a hanging chain. The problem can be formulated as follows:
\begin{multline}
\min_{(\xi_1,\dots,\xi_N)\in\mathbb{R}^{2N}} \frac{1}{N^3} \sum_{i=1}^N \left( \frac{k_\text{s}}{r^4}(\xi_{i-1}-\xi_i)\T (\xi_{i+1}-\xi_i) + y_i \right) \\ \text{s.t.} \quad \sqrt{(\xi_{k-1}-\xi_k)\T (\xi_{k-1}-\xi_k)}\leq r,~ k=1,2,\dots,N+1, \label{eq:numEx}
\end{multline}
where $\xi_k=(x_k,y_k)$ denotes the $xy$-position of the $k$-th element, $k=1,\dots N$, and $\xi_{0}=(0,0)$ and $\xi_{N+1}=(9,0)$ are the two endpoints. Further details are given in \Cref{app:numerics}. We compare the results of \algo~ with $A(x) = \alpha (\nabla h(x)^{\top} \nabla h(x))^{-1}$ (hereafter abbreviated as \algo),  \redalgo~, and an augmented Lagrangian method. The results are summarized in \Cref{fig:objfunconst2} for the case $N=10,000$, which leads to $20,000$ decision variables and $10,001$ nonlinear constraints.
We note that  \redalgo~ and augmented Lagrangian converge much more slowly than \algo. We also find that fine-tuning the augmented Lagrangian method is quite difficult, while the time steps of \algo~ and  \redalgo~ are easy to set (see \Cref{app:numerics} for details). The execution time per iteration of \algo~ is about five times that of \redalgo~ and the augmented Lagrangian. To demonstrate the potential of \redalgo~, we run the same example for $N=2 \times 10^5$, resulting in a large optimization problem with $4 \times 10^5$ decision variables and $2\times10^5$ nonlinear constraints. Under these conditions, solving the Karush-Kuhn-Tucker system becomes challenging at each iteration, which is required for \algo~. However, the  \redalgo~ still performs well, requiring only about 0.85 seconds to execute a single iteration.
\begin{figure}
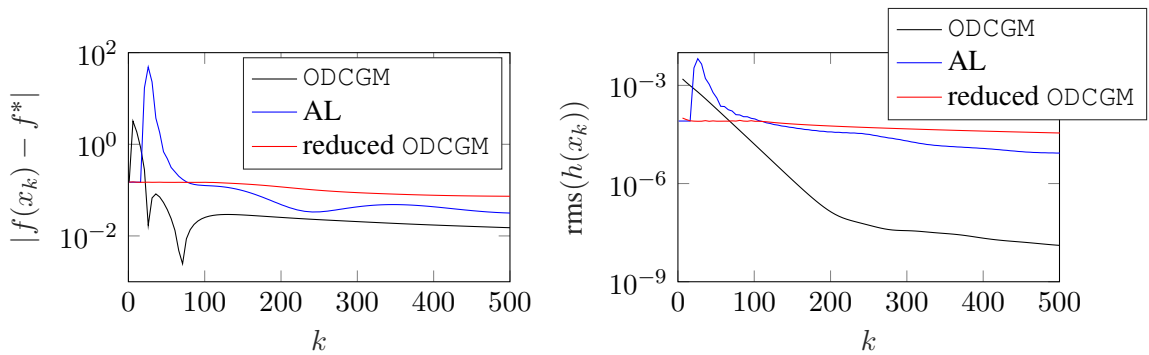

\newlength{\figurewidth}
\newlength{\figureheight}
\setlength{\figurewidth}{.35\columnwidth}
\setlength{\figureheight}{.2\columnwidth}
\input{media/fvalueUpdated.tikz} %
\input{media/gnormFinal.tikz}
\vspace{-10pt}
\caption{The figure compares the results of \algo~ (black) with \redalgo~ (red) and an augmented Lagrangian method (blue). \redalgo~ is performed with a decreasing step size of $\cO(k^{-1/2})$. The left plot shows the convergence in objective function, while the right plot gives the mean square error (denoted by rms) of the constraint violations. We note that \algo~ converges much faster than \redalgo~ and the augmented Lagrangian.}
\label{fig:objfunconst2}
\end{figure}

\vspace{-5pt}
\section{Conclusion}
In this paper, we propose \algo\ a novel infeasible method for optimization on an immersed manifold $\cM$.  An attractive property of \algo\ is that it avoids retractions and only projections on a vector space need to be computed. \algo\ achieve near-optimal  oracle complexities $\cO(1/\varepsilon^2)$ and $\cO(1/\varepsilon^4)$ in the deterministic and stochastic cases, respectively. Various extensions of \algo\ are presented. First, we introduce \redalgo, a computationally friendly version of \algo. Here we only need to compute one projection onto a hyperplane, but at the price of a slightly worse complexity bound. Second, we introduce \geomalgo\, a geometry-based version of \algo, where the projections account for the local Riemannian metric. When specialized to the Stieffel manifold, \geomalgo\ generalizes the \texttt{landing} algorithm \citep{ablin2022fast}. We show that \geomalgo\ enjoys the same oracle complexity as \algo\:  as a result, for Stieffel manifold, we establish  oracle complexity bounds for \texttt{landing}. Numerical experiments illustrate the performance of \algo\ and its variants. 

\clearpage
\newpage

\bibliography{math}

\clearpage
\newpage
\appendix
\section{Numerical experiments}\label{app:numerics}

\subsection{Procrustes problem}

We provide additional numerical experiment on matrices $A \in \bbR^{q \times q},B \in \bbR^{p\times q}$ for $p=1000$ and $q=500$. Plots are presented in Figure~\ref{fig:procrustes_large_scale}. The large scale of the problem add a lot of challenges for all algorithms and notably it affects \redalgo\, convergence for constraints. However, we see that \geomalgo\ again outperforms all the baselines. All experiments for the Procrustes problem are performed in PyTorch on CPU with Intel Core-i7 processor. 

\begin{figure}[h]
    \centering
    \includegraphics[width=0.98\linewidth]{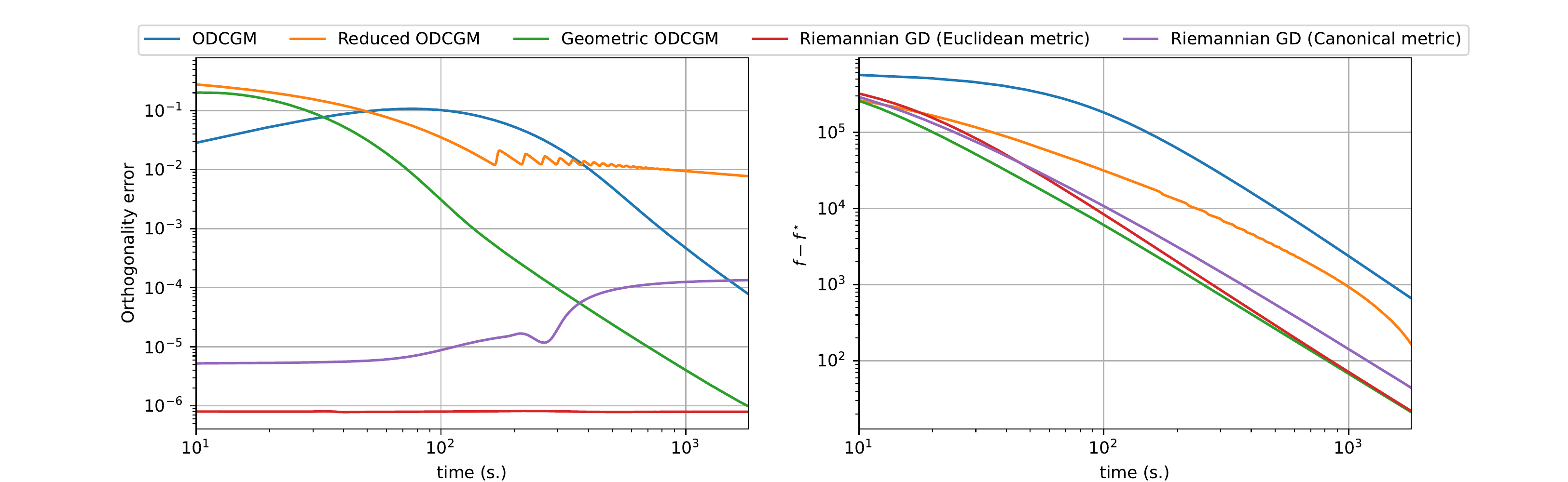}
    \caption{Comparison of \algo\ (blue), \redalgo\ (orange), and \geomalgo\ (green) with Riemannian gradient descent with two different Riemannian metrics (red and purple). The upper plot shows the orthogonality error for $X$, the lower plot shows the convergence of the objective function (averages over 128 seeds).}
    \label{fig:procrustes_large_scale}
\end{figure}

\paragraph{Euclidean projection on tangent space}

Additionally, in the case of optimization over Stiefel manifold $\cM = \{ X \in \bbR^{p \times q}: X^\top X = \cI \}$, we discuss a way of projecting onto $V(x)$ (necessary for \algo) whihc is more efficient than solving a linear system of size $pq$. 

First, we notice that the tangent space can be described as follows (see \cite{gao2022optimization}):
\[
    V(X) = \{ Y \in \bbR^{p \times q} : Y^\top X + X^\top Y = 0 \}\,.
\]
To optimize over this set, we apply the Lagrange multipliers method
\begin{equation}\label{eq:stiefel_valpha_proj}
    \min_{Y \in \bbR^{p \times q}} \frac{1}{2} \norm{Y - U}_2^2, \quad \text{s.t. } Y^\top X + X^\top Y = 0.
\end{equation}
To solve this problem, we start from reparametrization $Y = (X^+)^\top Z$, with $Z$ a skew-symmetric matrix, and where $X^+$ denotes the pseudoinverse. In this way, we obtain:
\begin{equation}\label{eq:stiefel_valpha_proj_skew_sym}
    \min_{Z \in \bbR^{n \times p}} \frac{1}{2} \norm{(X^+)^\top Z - U}_2^2, \quad \text{s.t. } Z^\top + Z = 0\,.
\end{equation}
First order optimality condition implies
$
    X^+ ( (X^+)^\top Z - U) - (\Lambda + \Lambda^\top)= 0 ,
$
where $\Lambda$ are Lagrange multipliers. By properties of pseudoinverse for full column rank matrices we have $X^+ ( (X^+)^\top = (X^\top X)^{-1}$ and thus
\[
    Z = (X^\top X) (\Lambda + \Lambda^\top) + X^\top U.
\]
Next, we have to choose $M$ to satisfy $Z^\top + Z = 0$:
\[
    (X^\top X) (\Lambda + \Lambda^\top) + (\Lambda + \Lambda^\top) (X^\top X) = - (X^\top U + U^\top X)\, .
\]
Since the right-hand side is symmetric, we only need to compute, over $P$, any solution to the following \textit{Sylvester's equation}:
\[
    (X^\top X) P + P(X^\top X) = - 2(X^\top U + U^\top X)
\]
and symmetrize it: $\Lambda = (P + P^\top)/2$. The solution to this system could be easily found using SVD of a matrix $X$. Notice that since $X^\top X \to I$ we may expect that all operations will be numerically stable. The total complexity of is equal to $\cO(pq^2)$.

\subsection{Hanging chain}
We compute the shape of a hanging chain as a numerical example. The chain has length $l=10$ and is divided into $N+1$ segments of equal length $r=10/(N+1)$. Each two segments are connected by a joint and a torsion spring, which models the stiffness of the chain. The torsion spring has a spring constant of $k_\text{s}=100$. The chain is suspended at positions $(0,0)$ and $(9,0)$, and an example with three nodes ($N=3$) is shown in \Cref{fig:chain}. The optimization variables are given by the coordinates $\xi_i=(x_i,y_i)$ of the nodes, $i=1,\dots,N$, and a non-convex distance constraint restricts the length of each segment to $r$. We compute the shape of the chain by minimizing its potential energy, i.e.,
\begin{equation}
\begin{split}
    \min_{(\xi_1,\dots,\xi_N)\in\mathbb{R}^{2N}} &\frac{1}{N^3} \sum_{i=1}^N  \left( \frac{k_\text{s}}{r^4}(\xi_{i-1}-\xi_i)\T (\xi_{i+1}-\xi_i) + y_i \right) \\ \text{s.t.}& \quad \sqrt{(\xi_{k-1}-\xi_k)\T (\xi_{k-1}-\xi_k)}\leq r,~ k=1,2,\dots,N+1,
\end{split}
 \label{eq:numEx_app}
\end{equation}
where $\xi_{0}=(0,0)$ and $\xi_{N+1}=(9,0)$ are the two endpoints. We note that the first term of the objective function contains a discrete approximation to the curvature of the chain that models the spring potential, while the second term corresponds to the gravitational potential. This example is motivated by the fact that it leads to a simple problem formulation that includes non-convex distance constraints, but also allows us to scale $N$ to values of $10^5$ or more. Finally, Euler-Bernoulli beam theory gives us a reasonable initial estimate for the start of the optimization. All calculations are performed in MATLAB on a standard laptop (Dell XPS 15 with an Intel Core-i7 processor, 32 gigabytes of RAM, and a Windows operating system).

We start with a chain of $10,001$ segments, leading to an optimization problem with $20,000$ decision variables and $10,001$ nonlinear constraints. We compare the three algorithms: \algo~ with $A(x) = \alpha (\nabla h(x)^{\top} \nabla h(x))^{-1}$ (denoted simply by \algo~), \redalgo~, and an extended Lagrangian approach. \Cref{fig:chain} (right) shows the initial estimate and the final result as computed by \algo~ (the result of the other algorithms is similar). 
The step size for \algo~ is set to $\gamma_k=T$, where $T=0.1/k_\text{s}$ and $\alpha=0.05/T$; the step size for \redalgo\ is set to  $\gamma_k=T$ for $k\leq 100$ and $\gamma_k=T/\sqrt{k-100}$ for $k > 100$ (the scaling with $1/k_\text{s}$ results from the Hessian of \eqref{eq:numEx_app}). \Cref{fig:objfunconst2} (main text) shows the value of the objective function and the root mean square error of the constraint violation over the course of the optimization.
We find that \algo~ leads to fast convergence in terms of constraint violations and function value, while the convergence of the augmented Lagrangian approach and \redalgo~ is much slower. Moreover, the performance of the augmented Lagrangian is quite sensitive to the initial value of the dual variable, which may even lead to divergence. In contrast, setting the step size of \algo~ and \redalgo~ is very simple. 
\Cref{fig:objfunconst} illustrates that \redalgo~ must be executed with decreasing step size; if a constant step size is used, the constraint violations remain as shown in the left panel, which is also consistent with our theoretical analysis. 
The right panel shows the execution time per iteration of the different algorithms by computing a moving average over past iterations. We conclude that  \redalgo~ and the augmented Lagrangian require only about one-fifth of the time of \algo\ for a single iteration. 
This can be explained by the fact that \algo\ requires the solution of a linear system of size $30,0001 \times 30,0001$ at each iteration (we have exploited parsimony but have not taken into account the special structure of the equality constraints in \eqref{eq:numEx_app}). 
Although  \redalgo~ (and the augmented Lagrangian) have lower execution time per iteration, it also converges much more slowly.

In order to highlight the potential of \redalgo~, we run the same example for $N=200,000$, which results in a large-scale optimization problem with $400,000$ decision variables and $200,001$ non-convex equality constraints. In this case, solving the resulting Karush-Kuhn-Tucker system at every iteration, which is required for \algo~, becomes challenging. However, \redalgo~ can still be applied and requires only about 0.085 seconds for executing a single iteration. The resulting function values and the evolution of the constraint violations are shown in \Cref{fig:large}.

\begin{figure}
\setlength{\figurewidth}{.45\columnwidth}
\setlength{\figureheight}{.125\columnwidth}
\resizebox{75mm}{!}{
\begingroup%
  \makeatletter%
  \providecommand\color[2][]{%
    \errmessage{(Inkscape) Color is used for the text in Inkscape, but the package 'color.sty' is not loaded}%
    \renewcommand\color[2][]{}%
  }%
  \providecommand\transparent[1]{%
    \errmessage{(Inkscape) Transparency is used (non-zero) for the text in Inkscape, but the package 'transparent.sty' is not loaded}%
    \renewcommand\transparent[1]{}%
  }%
  \providecommand\rotatebox[2]{#2}%
  \newcommand*\fsize{\dimexpr\f@size pt\relax}%
  \newcommand*\lineheight[1]{\fontsize{\fsize}{#1\fsize}\selectfont}%
  \ifx\svgwidth\undefined%
    \setlength{\unitlength}{249.31066487bp}%
    \ifx\svgscale\undefined%
      \relax%
    \else%
      \setlength{\unitlength}{\unitlength * \real{\svgscale}}%
    \fi%
  \else%
    \setlength{\unitlength}{\svgwidth}%
  \fi%
  \global\let\svgwidth\undefined%
  \global\let\svgscale\undefined%
  \makeatother%
  \begin{picture}(1,0.34381835)%
    \lineheight{1}%
    \setlength\tabcolsep{0pt}%
    \put(0,0){\includegraphics[width=\unitlength,page=1]{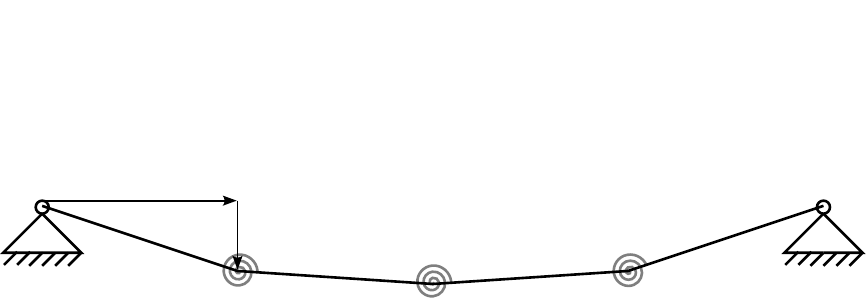}}%
    \put(0.10892166,0.12707191){\color[rgb]{0,0,0}\makebox(0,0)[lt]{\lineheight{1.25}\smash{\begin{tabular}[t]{l}$x_1$\end{tabular}}}}%
    \put(0.28299618,0.07432215){\color[rgb]{0,0,0}\makebox(0,0)[lt]{\lineheight{1.25}\smash{\begin{tabular}[t]{l}$y_1$\end{tabular}}}}%
    \put(0,0){\includegraphics[width=\unitlength,page=2]{drawingChain.pdf}}%
    \put(0.46991705,0.12707191){\color[rgb]{0,0,0}\makebox(0,0)[lt]{\lineheight{1.25}\smash{\begin{tabular}[t]{l}$e_x$\end{tabular}}}}%
    \put(0.05881946,0.31498573){\color[rgb]{0,0,0}\makebox(0,0)[lt]{\lineheight{1.25}\smash{\begin{tabular}[t]{l}$e_y$\end{tabular}}}}%
  \end{picture}%
\endgroup%
} \hspace{-20pt}%
\input{media/hangingChain.tikz}
\caption{The figure shows a sketch of the hanging chain (left), the results arising from optimizing \eqref{eq:numEx_app} (black, right) and the results predicted by the Euler-Bernoulli beam theory (red, right). The predictions from the Euler-Bernoulli beam theory are used as initial guess.}
\label{fig:chain}
\end{figure}

\begin{figure}
\setlength{\figurewidth}{.35\columnwidth}
\setlength{\figureheight}{.2\columnwidth}
%
%
\begin{tikzpicture}

\begin{axis}[%
width=0.951\figurewidth,
height=\figureheight,
at={(0\figurewidth,0\figureheight)},
scale only axis,
xmin=0,
xmax=1000,
xlabel style={font=\color{white!15!black}},
xlabel={$k$},
ymode=log,
ymin=6.04944475842867e-05,
ymax=0.00107789902082252,
yminorticks=true,
ylabel style={font=\color{white!15!black}},
ylabel={$\text{rms}(h(x_k))$},
axis background/.style={fill=white}
]
\addplot [color=black, forget plot]
  table[row sep=crcr]{%
1	0\\
6	0.000100527894253087\\
11	9.02201687063572e-05\\
16	8.28610073113188e-05\\
21	8.07897512672054e-05\\
26	8.00656693428086e-05\\
31	8.03219526202546e-05\\
36	8.36907794066898e-05\\
41	7.9924663485585e-05\\
46	8.22316113981195e-05\\
51	8.04544140899932e-05\\
56	8.050525266801e-05\\
61	8.07447760475883e-05\\
66	8.05728410271541e-05\\
71	8.23981738365558e-05\\
76	8.10792774976084e-05\\
81	8.52007723858638e-05\\
86	8.10915861240303e-05\\
91	8.007781617471e-05\\
96	8.33687048494412e-05\\
101	7.9836897667944e-05\\
106	8.27617752787801e-05\\
111	7.90212297396653e-05\\
116	7.44493985016894e-05\\
121	6.69274119149754e-05\\
126	6.04944475842867e-05\\
131	6.3252284907722e-05\\
136	0.000106054308717524\\
141	0.000146349140211535\\
146	0.000179193894426132\\
151	0.000203876188306075\\
156	0.000219243823381209\\
161	0.000232924175726245\\
166	0.000245821839528317\\
171	0.000255276790835232\\
176	0.0002618904526388\\
181	0.000269094427072533\\
186	0.000275925591726176\\
191	0.000281720049422391\\
196	0.000288422310523213\\
201	0.000295489208091768\\
206	0.000298518635012227\\
211	0.00029815226682191\\
216	0.000298030184352456\\
221	0.000299248037914872\\
226	0.000301625730493635\\
231	0.000304613243450714\\
236	0.000308021354743331\\
241	0.000312435788806174\\
246	0.000319449307311435\\
251	0.000329064869275586\\
256	0.000336010952121934\\
261	0.00033821413560188\\
266	0.000340281975639003\\
271	0.000345664347112523\\
276	0.00034979866552728\\
281	0.00035439532454455\\
286	0.000360044641807162\\
291	0.00036804073454825\\
296	0.000378580526716203\\
301	0.000390842498512269\\
306	0.000404540309266575\\
311	0.000420243863261184\\
316	0.000438113528423406\\
321	0.00045748789672899\\
326	0.000477123990056439\\
331	0.00049552387484862\\
336	0.000511883637668507\\
341	0.000527333291436143\\
346	0.000542439118430806\\
351	0.000556861711761479\\
356	0.000569815400964101\\
361	0.000581145223083535\\
366	0.000591714421809439\\
371	0.000602032454501006\\
376	0.000612033358024262\\
381	0.0006207648296437\\
386	0.000628505134639075\\
391	0.000636159295862795\\
396	0.000644208252663063\\
401	0.000652954208973046\\
406	0.000661986544556824\\
411	0.000670285640045249\\
416	0.000677543513710237\\
421	0.000684040688707937\\
426	0.000690173044468266\\
431	0.000696126879174943\\
436	0.000701882007957443\\
441	0.000707574095637582\\
446	0.000713736047646231\\
451	0.000720618596051307\\
456	0.000727868133793671\\
461	0.000735237751627243\\
466	0.000742509690417571\\
471	0.000749501317094884\\
476	0.000756683407015708\\
481	0.000764108755678512\\
486	0.000771930097751803\\
491	0.00077981764401125\\
496	0.000787537511428728\\
501	0.000794844169168479\\
506	0.00080164012492616\\
511	0.000807845833168892\\
516	0.000813861123253922\\
521	0.000819816400635059\\
526	0.000825350890432398\\
531	0.000830494198804137\\
536	0.000835448290292005\\
541	0.000840667962692266\\
546	0.000845776121688769\\
551	0.000850960352386457\\
556	0.000856327656662895\\
561	0.000861842816408452\\
566	0.00086730323012416\\
571	0.000872558961358554\\
576	0.000877362955580607\\
581	0.000881945515638828\\
586	0.000886298677264657\\
591	0.000890518298337324\\
596	0.000894456723976024\\
601	0.000898193987179577\\
606	0.000901814910696339\\
611	0.000905454917826598\\
616	0.000909085041272326\\
621	0.000912546147061999\\
626	0.000915931500680244\\
631	0.000919423472160521\\
636	0.00092302426742553\\
641	0.000926692899049023\\
646	0.0009303153204646\\
651	0.000933850997069408\\
656	0.000937266298502236\\
661	0.00094060881059894\\
666	0.000943861793151595\\
671	0.000947056764947298\\
676	0.000950173085176018\\
681	0.000953239880525431\\
686	0.000956235933967676\\
691	0.000959191123120714\\
696	0.00096207893559174\\
701	0.00096492408157113\\
706	0.000967693889485267\\
711	0.000970405246023574\\
716	0.000973025849607905\\
721	0.000975567210413372\\
726	0.000978005498598652\\
731	0.000980349222139779\\
736	0.000982586970380645\\
741	0.00098472477070706\\
746	0.000986762623158078\\
751	0.000988700987121399\\
756	0.000990546578667843\\
761	0.000992288962764788\\
766	0.000993943883040909\\
771	0.000995499081618748\\
776	0.000996993506932941\\
781	0.000998423085182317\\
786	0.000999835807923102\\
791	0.00100122580135037\\
796	0.00100263496883972\\
801	0.00100405508446291\\
806	0.00100552148258085\\
811	0.00100702310008556\\
816	0.00100858742801131\\
821	0.00101019604255511\\
826	0.00101186468880859\\
831	0.00101356302499791\\
836	0.00101529327694052\\
841	0.00101701663883118\\
846	0.00101873114252798\\
851	0.00102040865392136\\
856	0.00102206338480073\\
861	0.00102369959308566\\
866	0.00102535290844927\\
871	0.00102705244848214\\
876	0.0010288318385271\\
881	0.0010307146924231\\
886	0.00103270775560501\\
891	0.00103480218156959\\
896	0.00103696404115812\\
901	0.00103912670830278\\
906	0.00104118050817298\\
911	0.00104291518731388\\
916	0.00104426677755078\\
921	0.00104527849215748\\
926	0.00104632774796561\\
931	0.00104740507577423\\
936	0.00104868931621372\\
941	0.00105016182354285\\
946	0.00105194505744318\\
951	0.0010540403206828\\
956	0.00105651742749819\\
961	0.00105932887846144\\
966	0.00106243480962815\\
971	0.00106567911065026\\
976	0.00106888474830274\\
981	0.00107160586640787\\
986	0.00107380482268684\\
991	0.00107579847797852\\
996	0.00107789902082252\\
};
\end{axis}
\end{tikzpicture}
\input{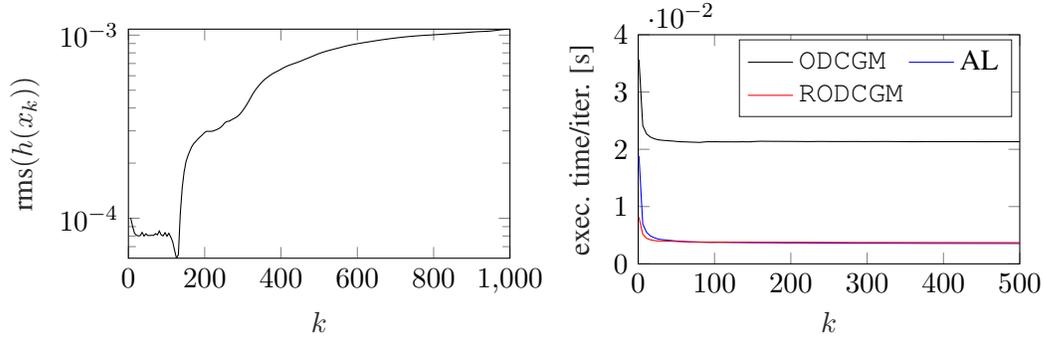}
\vspace{-10pt}
\caption{The figure on the left shows that \redalgo~ with a constant step size $\gamma_k=T$ does not converge and leads to significant constraint violations. The figure on the right shows the execution time per iteration of the different algorithms (moving average over past iterations). We note that the curve of the augmented Lagrangian and \redalgo~ are essentially superimposed.}
\label{fig:objfunconst}
\end{figure}


\begin{figure}
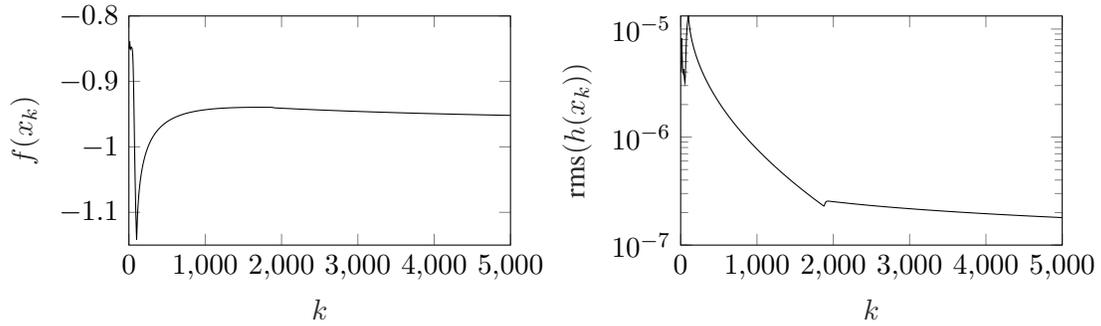

\setlength{\figurewidth}{.35\columnwidth}
\setlength{\figureheight}{.2\columnwidth}
\input{media/fvalueLarge.tikz}%
\input{media/gnormLarge.tikz}
\caption{This figure shows the results from applying \redalgo~ with $\gamma_k \sim 1/\sqrt{k}$ for large $k$ to \eqref{eq:numEx_app} with $N=200,000$. This leads to an optimization problem with $400,000$ decision variables and $200,001$ non-convex constraints. The evolution of the function value is shown on the left, whereas the evolution of the constraint violations is shown on the right.}
\label{fig:large}
\end{figure}

\section{Supplementary proofs}
\subsection{Proof of \Cref{th:gen_rates}}\label{proof:gen_rates}
We preface the proof with two elementary results.
\begin{lemma}\label{lm:aff_proj}
  Consider $n_h \leq n$, $y \in \bbR^n$, $b \in \bbR^{n_h}$ and $W \in \bbR^{n \times n_h}$, with $W$ being of full rank. It holds that:
  \begin{equation*}
    \argmin_{v \in \bbR^n : W^{\top} v = b} \frac{1}{2} \norm{ v + y}^2 = - y + W(W^{\top} W)^{-1} (W^{\top} y + b)
  \end{equation*}
\end{lemma}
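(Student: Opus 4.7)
The plan is to solve this as a standard equality-constrained strictly convex quadratic program via Lagrange multipliers. The objective $v \mapsto \tfrac{1}{2}\|v+y\|^2$ is strictly convex and the feasible set $\{v : W^\top v = b\}$ is a non-empty affine subspace (non-empty because $W$ has full rank, so $W^\top$ is surjective), hence the minimizer exists and is unique. It therefore suffices to exhibit a primal-dual pair $(v^*, \lambda^*)$ satisfying the KKT conditions.

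Forming the Lagrangian $L(v,\lambda) = \tfrac{1}{2}\|v+y\|^2 + \lambda^\top(W^\top v - b)$, stationarity in $v$ gives $(v+y) + W\lambda = 0$, i.e.\ $v = -y - W\lambda$. Plugging this into the primal feasibility condition $W^\top v = b$ yields $-W^\top y - W^\top W \lambda = b$. Since $W$ is of full (column) rank, $W^\top W \in \bbR^{n_h \times n_h}$ is invertible, so I can solve
\[
  \lambda = -(W^\top W)^{-1}(W^\top y + b).
\]
Substituting back,
\[
  v^* = -y - W\lambda = -y + W(W^\top W)^{-1}(W^\top y + b),
\]
which is exactly the claimed expression.

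There is no real obstacle here: the only things to check are that $W^\top W$ is invertible (follows from $W$ being full rank) and that the KKT point is the unique global minimizer (follows from strict convexity of the quadratic objective together with the affine, non-empty feasible set). A sanity check on the formula: applying $W^\top$ to $v^*$ gives $-W^\top y + W^\top W (W^\top W)^{-1}(W^\top y + b) = b$, confirming feasibility.
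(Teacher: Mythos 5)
Your proof is correct and follows essentially the same route as the paper's: form the Lagrangian, use stationarity to get $v=-y-W\lambda$, solve for $\lambda$ via the invertibility of $W^\top W$ (which holds since $W$ has full column rank), and substitute back. Your additional remarks on existence/uniqueness via strict convexity and the feasibility sanity check are fine but not needed beyond what the paper does.
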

\begin{corollary}\label{cor:proj_Vx}
If $x \in \bbR^n$ is such that $\nabla h(x)$ is of full rank, then:
  \begin{equation}\label{eq:proj_auxtan}
    -\nabla_V f(x) = \argmin_{v \in V(x)} \frac{1}{2} \norm{v + \nabla f(x)}^2 = - \nabla f + \nabla h(x) D_h(x) \nabla h(x)^{\top} \nabla f(x)\, ,
  \end{equation}
  where $D_h(x) := (\nabla h(x)^{\top} \nabla h(x))^{-1}$.
\end{corollary}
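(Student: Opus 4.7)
The plan is to observe that this corollary is a direct specialization of \Cref{lm:aff_proj}: the constraint set $V(x) = \{v \in \bbR^n : \nabla h(x)^{\top} v = 0\}$ is an affine subspace with linear defining matrix $W = \nabla h(x)$ and right-hand side $b = 0$. Since $\nabla h(x)$ is assumed of full rank, all hypotheses of the lemma are satisfied.

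First, I would apply \Cref{lm:aff_proj} with $W = \nabla h(x)$, $y = \nabla f(x)$, and $b = 0$. The lemma immediately yields
\begin{equation*}
    \argmin_{v \in V(x)} \frac{1}{2} \norm{v + \nabla f(x)}^2 = -\nabla f(x) + \nabla h(x) (\nabla h(x)^{\top} \nabla h(x))^{-1} \nabla h(x)^{\top} \nabla f(x),
\end{equation*}
which, upon substituting $D_h(x) = (\nabla h(x)^{\top} \nabla h(x))^{-1}$, gives the second equality in the statement of the corollary.

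For the first equality, I would invoke the definition of $\nabla_V f(x)$ as the orthogonal projection of $\nabla f(x)$ onto $V(x)$. By the standard characterization of orthogonal projections onto linear subspaces, $\nabla_V f(x) = \argmin_{u \in V(x)} \tfrac{1}{2}\norm{u - \nabla f(x)}^2$. Setting $v = -u$ (which is again an element of the linear subspace $V(x)$) shows that $-\nabla_V f(x) = \argmin_{v \in V(x)} \tfrac{1}{2}\norm{v + \nabla f(x)}^2$, matching the left-hand side.

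There is no substantive obstacle here: the only things to check are the elementary facts that $V(x)$ is a linear subspace (so that it is closed under negation), and that $\nabla h(x)^{\top}\nabla h(x)$ is invertible, which is immediate from the full-rank assumption. As a sanity check, I would verify that the formula $-\nabla f(x) + \nabla h(x) D_h(x) \nabla h(x)^{\top} \nabla f(x)$ indeed lies in $V(x)$ by multiplying on the left by $\nabla h(x)^{\top}$, which yields $-\nabla h(x)^{\top}\nabla f(x) + \nabla h(x)^{\top}\nabla f(x) = 0$, confirming feasibility.
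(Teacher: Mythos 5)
Your proposal is correct and matches the paper's approach exactly: the paper also treats this corollary as an immediate application of \Cref{lm:aff_proj} with $W = \nabla h(x)$, $y = \nabla f(x)$, and $b = 0$, with the identification of $-\nabla_V f(x)$ as the minimizer following from the definition of orthogonal projection on the linear subspace $V(x)$. Your additional feasibility check ($\nabla h(x)^{\top}$ annihilates the right-hand side) is a sound sanity verification but not needed beyond what the lemma provides.
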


The following proposition is the key element in our proof. It mainly follows from a Taylor expansion of $\Lambda_M$.

\begin{proposition}[Discrete Lyapunov function]\label{prop:lyap_stoch}
    Let \Cref{hyp:cont_model}--\ref{hyp:disc_model} hold and let $\overline{M}$ be the one of Theorem~\ref{th:cont_time}. If for all $k \in \bbN$, $\gamma_k \leq \alpha_m^{-1}$, then for all $M \geq \overline{M}$, it holds:
\begin{equation}\label{eq:lyap_sto_dec}
    \bbE_k [\Lambda_M(x_{k+1})]- \Lambda_M(x_k) \leq - \gamma_k \norm{ v_k}^2 \left( 1 - \frac{L_f + M L_h}{2} \gamma_k\right) + \frac{L_f + ML_h}{2}\sigma^2 \gamma_k^2 \, .
\end{equation}
\end{proposition}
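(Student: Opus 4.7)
The plan is to expand $\Lambda_M(x_{k+1}) = f(x_{k+1}) + M\|h(x_{k+1})\|$ by a Taylor argument for $f$ and a vector-valued Taylor argument for $h$ separately, take the conditional expectation, and then combine the two bounds using the choice $M \geq \overline{M} = M_1/\alpha_m$.

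For the $f$-term I would invoke $L_f$-Lipschitzness of $\nabla f$ on $K$ (\Cref{hyp:disc_model}-\ref{hyp:fh_Lipgrad}) and the standard descent lemma to obtain
\[
f(x_{k+1}) \leq f(x_k) + \gamma_k \langle \nabla f(x_k), v_k + \eta_{k+1}\rangle + \tfrac{L_f \gamma_k^2}{2}\|v_k + \eta_{k+1}\|^2.
\]
Taking $\bbE_k$, the cross terms involving $\eta_{k+1}$ vanish (since $\bbE_k[\eta_{k+1}] = 0$ and $v_k$, $\nabla f(x_k)$ are $\mcF_k$-measurable), and $\bbE_k[\|\eta_{k+1}\|^2] \leq \sigma^2$. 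I would then rewrite $\langle \nabla f, v_k\rangle = -\|v_k\|^2 + \langle \nabla f + v_k, v_k\rangle$ and control the last piece using the identity $|(\OF + \nabla f)^\top \OF| \leq M_1 \|h\|$ from \eqref{eq:err_f}, yielding
\[
\bbE_k[f(x_{k+1})] - f(x_k) \leq -\gamma_k \|v_k\|^2 + \gamma_k M_1 \|h(x_k)\| + \tfrac{L_f \gamma_k^2}{2}(\|v_k\|^2 + \sigma^2).
\]

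For the $\|h\|$-term I would use a componentwise Taylor expansion of $h$ combined with the triangle inequality:
\[
\|h(x_{k+1})\| \leq \|h(x_k) + \gamma_k \nabla h(x_k)^\top (v_k + \eta_{k+1})\| + \tfrac{L_h \gamma_k^2}{2}\|v_k + \eta_{k+1}\|^2.
\]
By construction of $\OF$ one has $\nabla h(x_k)^\top v_k = -\nabla h(x_k)^\top \nabla h(x_k) A(x_k) h(x_k)$ (the projected part $\nabla_V f(x_k)$ lies in $\ker \nabla h(x_k)^\top$), and $\nabla h(x_k)^\top \eta_{k+1} = 0$ since $\eta_{k+1} \in V(x_k)$. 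The first term on the right is therefore $\|(I - \gamma_k B(x_k)) h(x_k)\|$ with $B := \nabla h^\top \nabla h A$ symmetric positive definite and of smallest eigenvalue at least $\alpha_m$ by \Cref{hyp:cont_model}-\ref{hyp:hA_eigenv}. The restriction $\gamma_k \leq \alpha_m^{-1}$, combined with the uniform spectral boundedness of $B$ on the compact set $K$, gives the contraction $\|(I - \gamma_k B)h\| \leq (1 - \gamma_k \alpha_m)\|h\|$. Taking $\bbE_k$ yields
\[
\bbE_k[\|h(x_{k+1})\|] - \|h(x_k)\| \leq -\gamma_k \alpha_m \|h(x_k)\| + \tfrac{L_h \gamma_k^2}{2}(\|v_k\|^2 + \sigma^2).
\]

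Combining the two bounds with weight $M \geq M_1/\alpha_m$, the term $\gamma_k M_1 \|h(x_k)\|$ is absorbed by $-\gamma_k M \alpha_m \|h(x_k)\|$, which is nonpositive and can be dropped; the remaining quadratic terms collect into $\tfrac{(L_f + M L_h)\gamma_k^2}{2}(\|v_k\|^2 + \sigma^2)$, producing exactly the inequality \eqref{eq:lyap_sto_dec}. The main obstacle is the $\|h\|$-bound: because $\|h\|$ fails to be differentiable at $h = 0$, no direct descent lemma is available, so one must work with the vector Taylor expansion and the triangle inequality; some care is also required to ensure that the remainder produces an $O(\gamma_k^2)$ term independent of $\|h(x_k)\|$, so that the linear decay in $\|h\|$ survives near the feasible set.
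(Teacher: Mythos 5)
Your proposal is correct and follows essentially the same route as the paper's proof: a descent lemma for $f$ with the cross term controlled via the identity $\lvert(\OF+\nabla f)^{\top}\OF\rvert\leq M_1\norm{h}$ from \eqref{eq:err_f}, a vector Taylor expansion plus triangle inequality for $\norm{h}$ exploiting $\nabla h^{\top}\eta_{k+1}=0$ and $\nabla h^{\top}\nabla_V f=0$ to get the $(1-\alpha_m\gamma_k)$ contraction, and the combination with $M\geq M_1/\alpha_m$. No substantive differences.
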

\begin{proof}
Since $f$ is gradient Lipschitz continuous on $K$ and $\bbE_k[\eta_{k+1}] = 0$, it holds that
\begin{equation}\label{eq:f_disc_decr}
    \begin{split}
        \bbE_k[f(x_{k+1})] - f(x_k) &\leq \gamma_k \nabla f(x_k)^{\top}  v_k + \frac{L_f}{2}\gamma_k^2 \bbE_k[\norm{v_k + \eta_{k+1}}^2] \\
        &\leq \gamma_k \nabla f(x_k)^{\top}  v_k  + \frac{L_f}{2}\gamma_k^2 (\norm{v_k}^2 + \sigma^2) \\
        &\leq - \gamma_k \norm{v_k}^2\left( 1 - \frac{L_f}{2}\gamma_k\right) + \gamma_k(\nabla f(x_k) + v_k)^{\top} v_k + \frac{L_f}{2}\gamma_k^2\sigma^2\\
        &\leq - \gamma_k \norm{v_k}^2\left( 1 - \frac{L_f}{2}\gamma_k\right) + \gamma_k M_1 \norm{h(x_k)} + \frac{L_f}{2}\gamma_k^2\sigma^2 \, ,
    \end{split}
\end{equation}
where the second inequality follows from \Cref{hyp:disc_model}-\ref{hyp:var_bound} and the last inequality follows from Equation~\eqref{eq:err_f}.

Similarly, since $h$ is gradient Lipschitz on $K$, we obtain:
\begin{equation}\label{eq:h_disc_decr}
\begin{split}
        \bbE_k[\norm{h(x_{k+1})}]  &\leq \bbE_k[\norm{h(x_{k}) + \gamma_k \nabla h(x_k)^{\top}( v_k + \eta_{k+1})}] + \frac{L_h}{2} \gamma^2_k \bbE_k[\norm{v_k + \eta_{k+1}}^2] \\
            &\leq \norm{h(x_{k}) + \gamma_k \nabla h(x_k)^{\top}v_k} +  \frac{L_h}{2} \gamma^2_k \norm{v_k}^2 + \frac{L_h}{2} \gamma_k^2 \sigma^2 \\
            &\leq \norm{ h(x_k) - \gamma_k \nabla h(x_k)^{\top} \nabla h(x_k) A(x_k) h(x_k)} + \frac{L_h}{2} \gamma^2_k (\norm{v_k}^2 + \sigma^2) \\
            &\leq (1 - \alpha_m \gamma_k) \norm{h(x_k)} + \frac{L_h}{2} \gamma^2_k (\norm{v_k}^2 + \sigma^2) \, ,
\end{split}    
\end{equation}
where in the second inequality we have used that $\eta_{k+1} \in V(x_k)$ and in the last inequality our choice of $(\gamma_k)$ with \Cref{hyp:cont_model}-\ref{hyp:hA_eigenv}.

Combining Equations~\eqref{eq:f_disc_decr} and \eqref{eq:h_disc_decr} with the fact that $M \geq \overline{M} = M_1/\alpha_m$ completes the proof.
\end{proof}

The following corollary is obtained by telescoping Equation~\eqref{eq:lyap_sto_dec}.
\begin{corollary}\label{cor:lyap_telesc}
    Under the assumptions of Theorem~\ref{th:gen_rates}, for $N >0$, it holds that: 
    \begin{equation}
        \sum_{i=0}^{N-1} \bbE[\norm{v_i}^2] \leq 2 \frac{\bbE[\Lambda_M(x_0)] - \bbE[\Lambda_M(x_{N-1})]}{\gamma} + N \gamma (L_f + M L_h) \sigma^2 \,.
    \end{equation}
\end{corollary}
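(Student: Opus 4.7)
The plan is to obtain the bound by a direct telescoping of the one-step Lyapunov decrease from \Cref{prop:lyap_stoch}. The only non-routine observation needed is that the step-size restriction inherited from \Cref{th:gen_rates}, namely $\gamma \le \gamma_{\max} = \min(\alpha_m^{-1}, (L_f + M L_h)^{-1})$, forces the coefficient of $\norm{v_k}^2$ in the descent inequality to be at least $\gamma/2$. Indeed, since $\gamma (L_f + M L_h) \le 1$, one has $1 - \tfrac{L_f + M L_h}{2}\gamma \ge \tfrac{1}{2}$, which allows the recursion to be decoupled from the delicate step-size balancing.

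Substituting this lower bound into \Cref{prop:lyap_stoch} and recalling that $\gamma_k \equiv \gamma$ under the hypotheses of \Cref{th:gen_rates}, I would obtain for every $k$:
\begin{equation*}
\bbE_k[\Lambda_M(x_{k+1})] - \Lambda_M(x_k) \leq - \frac{\gamma}{2}\norm{v_k}^2 + \frac{L_f + M L_h}{2}\sigma^2\gamma^2.
\end{equation*}
Taking full expectation (using the tower property $\bbE[\bbE_k[\cdot]] = \bbE[\cdot]$), summing from $k=0$ to $N-1$, and using the telescoping on the left-hand side yields
\begin{equation*}
\bbE[\Lambda_M(x_N)] - \bbE[\Lambda_M(x_0)] \leq - \frac{\gamma}{2}\sum_{i=0}^{N-1}\bbE[\norm{v_i}^2] + N\cdot \frac{L_f + M L_h}{2}\sigma^2\gamma^2.
\end{equation*}
Rearranging and multiplying through by $2/\gamma$ produces exactly the claimed inequality.

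There is no real obstacle here: the measurability of $v_k$ with respect to $\mcF_k$ is built into the construction \eqref{eq:orth_alg}, so the conditional inequality of \Cref{prop:lyap_stoch} integrates without issue, and the step-size condition is precisely calibrated to make the coefficient $1 - \tfrac{L_f + M L_h}{2}\gamma$ bounded below by $1/2$, which is what enables the clean telescoping. (I note that the right-hand side index $x_{N-1}$ in the statement appears to be a minor typo for $x_N$, which is what the telescoping actually produces; either way the monotonicity argument used downstream in \Cref{th:gen_rates} is unaffected, since one can always replace $\bbE[\Lambda_M(x_N)]$ by $\inf_{x \in K}\Lambda_M(x)$ when deriving complexity bounds.)
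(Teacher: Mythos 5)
Your proof is correct and follows exactly the paper's route: the corollary is obtained by telescoping the one-step Lyapunov inequality of \Cref{prop:lyap_stoch}, using $\gamma(L_f + M L_h) \le 1$ (from $\gamma \le \gamma_{\max}$) to lower-bound the coefficient $1 - \tfrac{L_f + ML_h}{2}\gamma$ by $\tfrac12$. Your side remark about the index $x_{N-1}$ versus $x_N$ is also apt: the telescoped sum naturally produces $\bbE[\Lambda_M(x_N)]$, and the discrepancy is harmless since the downstream proof of \Cref{th:gen_rates} immediately replaces this term by $\inf_{x \in K}\Lambda_M(x)$ via $D_M$.
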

\subsubsection{Deterministic case: $\sigma = 0$}\label{proof:det_rates}
Fix $\sigma = 0$, from Corollary~\ref{cor:lyap_telesc} we obtain:
 \begin{equation*}
    \gamma \sum_{i=0}^{N-1} \norm{v_i}^2 \leq 2(\Lambda_{M}(x_0) - \Lambda_{M}(x_{N-1}))\, .
  \end{equation*}
  This implies Equation~\eqref{eq:det_rates} and shows that $ \norm{v_k} \rightarrow 0$. Now notice that
  \begin{equation*}
    \norm{v_k}^2 = \norm{\nabla_V f(x_k)}^2 + \norm{\nabla h(x_k) A(x_k) h(x_k)}^2 \, .
  \end{equation*}
  Since by ~\Cref{hyp:cont_model} both $A$ and $\nabla h$ are of full rank on $K$, this implies that $\norm{h(x_k)} \rightarrow 0$. Thus, if $x^*$ is an accumulation point of $(x_k)$, then it must satisfy $h(x^*) = 0$, or, in other words, $x^* \in \cM$. Finally, by continuity of $\OF$, we also have $0 = \lim_{k \rightarrow \infty}\norm{\OF(x_k)} = \norm{\OF(x^*)} = \norm{\Grad f(x^*)}$, which completes the proof.

\subsubsection{The general case}

Using Corollary~\ref{cor:lyap_telesc}, we obtain:
\begin{equation*}
\begin{split}
        \bbE\left[\norm{v_{\hat k}}^2\right] = \frac{1}{N}\sum_{i=0}^{N-1} \bbE[\norm{v_i}^2] &\leq 2\frac{\bbE[\Lambda_M(x_0)] - \bbE[\Lambda_M(x_{N-1})]}{N \gamma} + \gamma(L_f + M L_h) \sigma^2 \\
        &\leq  \frac{2 D_M}{N  \gamma} + \gamma(L_f + M L_h) \sigma^2
\end{split}
\end{equation*}
Therefore, 
\begin{equation*}
    \begin{split}
        \bbE\left[\norm{v_{\hat k}}^2\right] &\leq \frac{2 D_M}{N  \gamma} + \bar D (L_f + M L_h) \frac{\sigma}{\sqrt{N}} \\
        &\leq \frac{2 D_M}{N} \max(\alpha_m, L_f + M L_h, \bar D^{-1} \sigma \sqrt{N}) + \bar D (L_f + M L_h) \frac{\sigma}{\sqrt{N}} \\
        &\leq \frac{2 D_M}{N}\left( \alpha_m + L_f + M L_h\right) + \frac{\sigma}{\sqrt{N}} \left( \bar D (L_f + M L_h) + \frac{2 D_M}{\bar D}\right) \, ,
    \end{split}
\end{equation*}
which completes the proof.

\subsection{Safe step size}\label{proof:safe_step}
In this section, we discuss \Cref{hyp:disc_model}-\ref{hyp:iter_bound}. We establish that if the sequence $(\eta_{k+1})$ is bounded (which is the case in both the deterministic and finite-sum settings), a sufficiently small step size forces the algorithm to stay in $K$. To formulate this theorem, we denote $C_h, C_f$ as the Lipschitz constants of $h,f$ on $K$ and define $ C_A:= \sup_{x \in K} \norm{\nabla h A}$.

\begin{proposition}[safe step-size]\label{prop:safe_step}
  Assume \Cref{hyp:cont_model} and \Cref{hyp:disc_model}-\ref{hyp:fh_Lipgrad} and that there is a constant $b>0$ such that $\sup_{k} \norm{\eta_{k+1}} \leq b$. Consider $\delta >0$ and let $\gamma$ be defined as
  \begin{equation}\label{eq:safe_step}
    \gamma =  \min\left(\frac{1}{\alpha_m}, \frac{\delta}{C_h \sqrt{2 \left(C^2_A r_1^2 + C_f^2 + b^2\right)}}, \frac{\alpha_m}{2 L_h C_A^2 r_1}, \frac{2\alpha_m (r_1-\delta)}{3 L_h (C_f^2+ b^2)}\right)\, .
  \end{equation}
  If $(\gamma_k)$ is bounded by $\gamma$ and $\norm{h(x_0)} \leq r_1- \delta$, then the sequence $(x_k)$ produced by \algo\ remains in $K$.
\end{proposition}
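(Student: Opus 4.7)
The plan is to prove by induction on $k \ge 0$ that $x_k \in K$, i.e.\ $\|h(x_k)\| \le r_1$. The base case follows from $\|h(x_0)\| \le r_1 - \delta$. For the inductive step, assume $x_k \in K$; I will establish $x_{k+1} \in K$ by splitting the analysis into two regimes according to the position of $x_k$ relative to the boundary of $K$. In both regimes the key preliminary bound is
\begin{equation*}
\|v_k\|^2 = \|\nabla h(x_k) A(x_k) h(x_k)\|^2 + \|\nabla_V f(x_k)\|^2 \le C_A^2 \|h(x_k)\|^2 + C_f^2 \le C_A^2 r_1^2 + C_f^2,
\end{equation*}
where the first equality uses the orthogonality of the two components of $v_k$ (by construction of $\OF$); combined with $\|\eta_{k+1}\| \le b$ this gives $\|v_k + \eta_{k+1}\|^2 \le 2(C_A^2 r_1^2 + C_f^2 + b^2)$.

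\textit{Safe zone} $\|h(x_k)\| \le r_1 - \delta$: Here I would only use the Lipschitz continuity of $h$ on $K$,
\begin{equation*}
\|h(x_{k+1})\| \le \|h(x_k)\| + \gamma C_h \|v_k + \eta_{k+1}\| \le (r_1 - \delta) + \gamma C_h \sqrt{2(C_A^2 r_1^2 + C_f^2 + b^2)} \le r_1,
\end{equation*}
the last inequality being precisely the second term in the minimum defining $\gamma$.

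\textit{Buffer zone} $r_1 - \delta < \|h(x_k)\| \le r_1$: The crude displacement bound is insufficient here and the feedback structure of $\OF$ has to be exploited. A second-order Taylor expansion of $h$, together with $\nabla h(x_k)^\top \eta_{k+1} = 0$ (since $\eta_{k+1} \in V(x_k)$), the identity $\nabla h(x_k)^\top v_k = -\nabla h(x_k)^\top \nabla h(x_k) A(x_k) h(x_k)$, the symmetric positive-definiteness of $\nabla h^\top \nabla h A$ with smallest eigenvalue $\ge \alpha_m$, and the first condition $\gamma \le \alpha_m^{-1}$, yields, along the same lines as in~\eqref{eq:h_disc_decr},
\begin{equation*}
\|h(x_{k+1})\| \le (1 - \alpha_m \gamma)\|h(x_k)\| + \tfrac{L_h}{2}\gamma^2 \|v_k + \eta_{k+1}\|^2.
\end{equation*}
Substituting $\|v_k + \eta_{k+1}\|^2 \le 2(C_A^2 \|h(x_k)\|^2 + C_f^2 + b^2)$ and bounding $\|h(x_k)\|^2 \le r_1 \|h(x_k)\|$, the third step-size condition $\gamma \le \alpha_m/(2 L_h C_A^2 r_1)$ absorbs the term $L_h \gamma^2 C_A^2 r_1 \|h(x_k)\|$ into the contraction, leading to
\begin{equation*}
\|h(x_{k+1})\| \le \bigl(1 - \tfrac{\alpha_m \gamma}{2}\bigr)\|h(x_k)\| + L_h \gamma^2 (C_f^2 + b^2).
\end{equation*}
Using $\|h(x_k)\| > r_1 - \delta$ and combining with the fourth step-size condition $\gamma \le 2\alpha_m(r_1-\delta)/(3 L_h (C_f^2 + b^2))$ then forces $\|h(x_{k+1})\| \le r_1$, completing the inductive step.

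The main obstacle is the tight bookkeeping of constants in the buffer-zone estimate: the quadratic noise floor $L_h \gamma^2 (C_f^2 + b^2)$ must be traded against the linear contraction $\alpha_m \gamma \|h(x_k)\|/2$ in such a way that the sum never exceeds $r_1$, and this is precisely what dictates the fourth term in the minimum defining $\gamma$. A secondary subtlety is making sure the sharper bound $\|v_k\|^2 \le C_A^2 \|h(x_k)\|^2 + C_f^2$ (needed in the buffer zone to absorb into the contraction) is compatible with the uniform bound $\|v_k\|^2 \le C_A^2 r_1^2 + C_f^2$ (needed in the safe zone to obtain a $\delta$-sized step), which is automatic since the induction hypothesis provides $\|h(x_k)\| \le r_1$ throughout.
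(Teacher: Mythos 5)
Your induction invariant is too weak, and this opens a genuine gap in the buffer-zone case. The paper's proof maintains $\norm{h(x_k)} \le r_1 - \delta$ for \emph{every} $k$, and the $\delta$-margin is not cosmetic: it is exactly what makes the preliminary step ``$\norm{x_{k+1}-x_k}\le \delta/C_h$ implies $x_{k+1}\in K$'' work (consider the first exit time $u$ of the segment from $K$; then $\delta \le \norm{h(x_u)}-\norm{h(x_k)} \le C_h\norm{x_u-x_k}\le u\delta$ forces $u=1$, a contradiction). Only once $x_{k+1}\in K$ is secured can one invoke $L_h$ and $C_h$, which are Lipschitz constants \emph{on $K$ only}. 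In your buffer zone you allow $\norm{h(x_k)}$ to be arbitrarily close to (or equal to) $r_1$, i.e.\ $x_k$ may sit on the boundary of $K$; there the displacement bound no longer certifies $x_{k+1}\in K$, so the second-order Taylor expansion you apply to $h$ along the step is not justified. (A milder version of the same objection applies to your safe-zone inequality $\norm{h(x_{k+1})}\le\norm{h(x_k)}+\gamma C_h\norm{v_k+\eta_{k+1}}$: it silently uses $C_h$-Lipschitzness at the not-yet-located point $x_{k+1}$, which is precisely what the exit-time argument is there to supply.)

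A secondary problem is that, even granting the buffer-zone Taylor expansion, your constants do not close. From $\norm{h(x_{k+1})}\le\bigl(1-\tfrac{\alpha_m\gamma}{2}\bigr)\norm{h(x_k)}+L_h\gamma^2(C_f^2+b^2)$ together with $r_1-\delta<\norm{h(x_k)}\le r_1$, concluding $\norm{h(x_{k+1})}\le r_1$ requires $L_h\gamma(C_f^2+b^2)\le \tfrac{\alpha_m(r_1-\delta)}{2}$, whereas the fourth term of \eqref{eq:safe_step} only yields $L_h\gamma(C_f^2+b^2)\le\tfrac{2\alpha_m(r_1-\delta)}{3}$ --- a factor $4/3$ short. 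The remedy is to drop the two-zone split and prove the stronger invariant $\norm{h(x_{k+1})}\le r_1-\delta$ directly, as the paper does: after establishing $x_{k+1}\in K$ via the exit-time argument, the contraction estimate gives $\norm{h(x_{k+1})}\le\norm{h(x_k)}\le r_1-\delta$ whenever $\alpha_m\norm{h(x_k)}$ dominates $2\gamma_k L_h(C_f^2+b^2)$, and in the complementary case $\norm{h(x_k)}$ is already so small that the fourth step-size condition bounds $\norm{h(x_{k+1})}$ by $r_1-\delta$ outright.
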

\begin{proof}
Let $k \in \bbN$ be such that $\norm{h(x_k)} \leq r_1 - \delta$, we will show that $\norm{h(x_{k+1})} \leq r_1 - \delta$, which will complete the proof by an immediate induction. 

Denote $C_h$ the Lipschitz constant of $h$ on $K$ and notice that if $\norm{x_{k+1} - x_k} \leq \delta/C_h$, then $x_{k+1} \in K$. Indeed, assume that $x_{k+1} \notin K$ and denote for $t \in [0, 1]$, $x_t = x_k + t(x_{k+1}- x_k)$. Let $u = \inf \{t \in [0,1]: x_t \notin K \}$ and note that by continuity of $h$, $\norm{h(x_u)} = r_1$. This implies that
\begin{equation*}
    \delta \leq \norm{h(x_u)} - \norm{h(x_k)} \leq \norm{h(x_u) -h(x_k)} \leq C_h\norm{x_u - x_k} \leq u \delta \, .
\end{equation*}
Thus, $u$ must be equal to $1$, which is a contradiction. 

Now, 
\begin{equation*}
    \norm{x_{k+1} - x_k}^2 \leq 2\gamma_k^2 (\norm{v_k}^2 + b^2) \, ,
\end{equation*}
and since $\nabla_V f$ is orthogonal to $\nabla h A h$, we obtain:
\begin{equation}\label{eq:safe_vkbound}
    \norm{v_k}^2 \leq \norm{\nabla h(x_k) A(x_k) h(x_k)}^2 + \norm{\nabla_V f}^2 \leq C^2_A \norm{h(x_k)}^2 + C_f^2  \leq C^2_A r_1^2+ C_f^2\, ,
\end{equation}
Hence, we get,
\begin{equation*}
    \norm{x_{k+1} - x_k}^2 \leq 2 \gamma^2 (C^2_A r_1^2+ C_f^2 + b^2) \leq \frac{\delta^2}{C_h^2} \, ,
\end{equation*}
which shows that $x_{k+1}$ remain in $K$. Now, since $x_k, x_{k+1} \in K$ and $\nabla h$ is $L_h$-Lipschitz on $K$, it holds that:
\begin{equation*}
  \norm{h(x_{k+1})- h(x_k) - \gamma_k \nabla h(x_k)^{\top} v_k} \leq \frac{L_h}{2} \norm{x_{k+1} - x_k}^2 \, ,
\end{equation*}
where we have used the fact that $\eta_{k+1} \in V(x_k)$.
Thus,
\begin{equation*}
  \norm{h(x_{k+1})} \leq \norm{h(x_k) + \gamma_k \nabla h(x_k)^{\top} v_k} +\frac{L_h}{2} \norm{x_{k+1} - x_k}^2  \, .
\end{equation*}
Recall that $\cI \in \bbR^{n_h\times n_h}$ denotes the identity matrix, it holds that:
\begin{equation*}
  \begin{split}
        \norm{h(x_{k+1})} &\leq \norm{( \cI - \gamma_k \nabla h(x_k)^{\top} \nabla h(x_k) A(x_k)) h(x_k)}  +\frac{L_h}{2} \norm{x_{k+1} - x_k}^2  \\
         &\leq (1 - \alpha_m \gamma_k) \norm{h(x_k)}  +\frac{L_h}{2} \norm{x_{k+1} - x_k}^2 \,.
  \end{split}
\end{equation*}
Examining Equation~\eqref{eq:safe_vkbound} we can actually obtain a tighter upper bound on $\norm{x_{k+1} - x_k}^2$
\begin{equation*}
    \norm{x_{k+1} - x_k}^2 \leq 2 b^2 + 2 C^2_A r_1 \norm{h(x_k)} + 2 C_f^2\, .
\end{equation*}
And finally: 
\begin{equation*}
    \begin{split}
           \norm{h(x_{k+1})} \leq \norm{h(x_k)} + \gamma_k \norm{h(x_k)}( \gamma_k L_h C_A^2 r_1 -  \alpha_m ) + \gamma_k^2 L_h(C_f^2 + b^2) \, ,
    \end{split}
\end{equation*}
Since $\gamma \leq \alpha_m / (2L_h C_A^2 r_1)$, it holds that:
\begin{equation*}
  \norm{h(x_{k+1})} \leq \norm{h(x_k)} - \alpha_m \gamma_k \norm{h(x_k)}/2 + \gamma_k^2 L_h(C_f^2 + b^2) \, .
\end{equation*}
Therefore, if $\alpha_m \norm{h(x_{k}) } \geq 2 \gamma_k(L_h C_f^2 + b^2)$, then $\norm{h(x_{k+1})} \leq \norm{h(x_k)}$. Otherwise,
\begin{equation*}
  \norm{h(x_{k+1})} \leq \norm{h(x_{k})} +\gamma_k^2 L_h(C_f^2 + b^2)  \leq (L_h C_f^2 +b^2)\gamma_k \left(\frac{2}{\alpha_m} + \gamma_k \right) \leq \frac{3(L_h C_f^2 +b^2) \gamma}{2\alpha_m} \leq r_1 -\delta \, ,
\end{equation*}
where the last inequality comes from our choice of $\gamma$.
\end{proof}
\begin{remark}\label{rm:safe_step}
Although equation ~\eqref{eq:safe_step} may be intractable, it shows that the iterates remain in $K$ for a sufficiently small $\gamma$. Therefore, we can combine our algorithm with standard line search techniques (see \cite{NoceWrig06}). For example, if we set a threshold $\overline{\gamma}$, we check whether iterates with step sizes smaller than the threshold remain in $K$. If this is not the case, the threshold is divided by $2$. Such a change of the threshold value can only occur finitely often, so that the convergence rates of \Cref{th:gen_rates} remain true.
\end{remark}

\subsection{Proof of \Cref{th:reduced_rates}}\label{app:reduced_proofs}

We start from the observation that explains how looks the solution to the projection on $\tilde{V}(x) = \{ v \in \bbR^n : \nabla H(x)^\top v = 0 \}$, where $H(x) = \norm{h(x)}^2 / 2$. 
\begin{corollary}\label{cor:tildeV_projections}
    Let $f\colon \bbR^n \to \bbR$ be a differentiable function and let $\cM = \{ x \in \bbR^n : H(x) = 0\}$. Then for any $x$ such that $\nabla h(x)$ is of full rank it holds
    \[
        \nabla_{\tV} f(x) = \nabla f(x) + \lambda(x) \cdot \nabla H(x),
    \]
    where 
    \[
        \lambda(x) = \begin{cases}
            0, & x \in \cM \\
            -\frac{\nabla H(x)^\top \nabla f(x)}{\norm{\nabla H(x)}^2}, & x \not \in \cM
        \end{cases}
    \]
\end{corollary}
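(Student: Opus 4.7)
The plan is to treat the two cases of the piecewise definition separately and in each case reduce the claim to a standard orthogonal-projection formula.

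First, for $x \in \cM$: since $h(x) = 0$ and $\nabla H(x) = \nabla h(x) \cdot h(x)$, we get $\nabla H(x) = 0$, so by definition $\tilde V(x) = \bbR^n$ and the orthogonal projector $P_{\tV(x)}$ is the identity. Thus $\nabla_{\tV}f(x) = \nabla f(x)$, which matches the claimed formula with $\lambda(x) = 0$.

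Second, for $x \notin \cM$: by assumption $\nabla h(x)$ has full rank, and $h(x) \neq 0$, so $\nabla H(x) = \nabla h(x)\cdot h(x) \neq 0$ and $\tV(x)$ is a genuine hyperplane with normal $\nabla H(x)$. The plan is to apply \Cref{lm:aff_proj} with $W = \nabla H(x) \in \bbR^{n\times 1}$, $y = \nabla f(x)$, and $b = 0$, which yields
\begin{equation*}
-\nabla_{\tV} f(x) \;=\; \argmin_{v:\, \nabla H(x)^\top v = 0} \tfrac{1}{2}\|v + \nabla f(x)\|^2 \;=\; -\nabla f(x) + \nabla H(x)\bigl(\nabla H(x)^\top \nabla H(x)\bigr)^{-1}\nabla H(x)^\top \nabla f(x).
\end{equation*}
Since $W^\top W = \|\nabla H(x)\|^2$ is a positive scalar, rearranging gives exactly $\nabla_{\tV} f(x) = \nabla f(x) + \lambda(x)\nabla H(x)$ with $\lambda(x) = -\nabla H(x)^\top\nabla f(x)/\|\nabla H(x)\|^2$, matching the stated formula.

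There is essentially no obstacle: both cases are immediate consequences of the already-stated projection lemma once one observes that $\nabla H$ vanishes on $\cM$. The only point worth flagging is to justify that $-\nabla_{\tV}f(x)$ is indeed the minimizer in \Cref{lm:aff_proj}; this follows from the definition $\nabla_{\tV} f(x) = P_{\tV(x)} \nabla f(x)$ and the fact that the orthogonal projection of $\nabla f$ onto the linear subspace $\tV(x)$ is characterized by minimizing $\tfrac{1}{2}\|v - \nabla f(x)\|^2 = \tfrac{1}{2}\|(-v) + \nabla f(x)\|^2$ over $v \in \tV(x)$, which is exactly the optimization problem solved by \Cref{lm:aff_proj} after the sign flip $v \mapsto -v$.
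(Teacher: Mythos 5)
Your proof is correct and follows exactly the paper's route: the paper's own proof consists of applying \Cref{lm:aff_proj} with $n_h=1$, $W=\nabla H(x)$, and $b=0$ for $x\notin\cM$. You additionally spell out the case $x\in\cM$ (where $\nabla H(x)=\nabla h(x)h(x)=0$ forces $\tV(x)=\bbR^n$) and the sign-flip reduction to the lemma's minimization problem, both of which the paper leaves implicit.
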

\begin{proof}
    Apply \Cref{lm:aff_proj} with $n_h = 1, W = \nabla H(x)$ and $b = 0$ for $x \not \in \cM$.
\end{proof}
\begin{remark}
    Even through $\lambda(x) \to -\infty$ as $x \to \cM$, we have that projected gradients are always bounded:
    \[
        \norm{\nabla_{\tV} f(x)} \leq \norm{\nabla f(x)} + \frac{\vert \nabla H(x)^\top \nabla f(x) \vert }{\norm{\nabla H(x)}^2} \cdot \norm{\nabla H(x)} \leq 2 \norm{\nabla f(x)}.
    \]
\end{remark}

Next we provide a lemma that guarantees for $H(x_k)$ that under the specific choice of $A(x) = \alpha(x)\cI$ where $\alpha(x) =  \alpha \cdot \frac{H(x)}{\norm{\nabla H(x)}^2}$ for a constant $\alpha > 0$ if $x \not \in \cM$ and $\alpha(x) = 0$ if $x \in \cM$. 

\begin{lemma}\label{lm:reduced_constrain_rate}
    Assume \Cref{hyp:cont_model}'-\ref{hyp:disc_model}. Assume that for any $k > 0$, $\alpha \gamma_k \leq 1$. Define $v_k = -\alpha(x_k) \nabla H(x_k) - \nabla_{\tV} f(x_k)$. It holds
    \begin{align*}
        \bbE[H(x_{k+1})] &\leq H(x_0)  \cdot \prod_{j=0}^k (1 - \alpha \gamma_j)  + \bbE\left[\frac{L_H}{2} \sum_{j=0}^k \gamma_j^2 (\norm{v_j}^2 +\sigma^2)\prod_{\ell=j+1}^k (1 - \alpha \gamma_\ell)\right]\, .
    \end{align*}
    Furthermore, if $(\gamma_k)$ is a non-increasing sequence, then for all $N \in \bbN$,
    \begin{equation*}
        \bbE\left[\sum_{k=0}^{N-1} \gamma_k H(x_{k})\right] \leq \frac{1}{\alpha} H(x_0) + \frac{L_H}{2 \alpha} \bbE\left[ \sum_{k=0}^{N-1} \gamma_k^2 \norm{v_k}^2\right]  + \frac{L_H}{2\alpha} \sum_{k=0}^{N-1} \gamma_k^2 \sigma^2 \, .
    \end{equation*}
\end{lemma}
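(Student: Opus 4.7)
My plan is to exploit the descent lemma for $H$, which has an $L_H$-Lipschitz gradient (as stated in the text), combined with the orthogonality structure of $\tilde V(x_k)$.

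\textbf{Step 1: descent inequality for $H$.} Since $\nabla H$ is $L_H$-Lipschitz on $K$ and $x_{k+1} - x_k = \gamma_k v_k + \gamma_k \eta_{k+1}$, I would write
\[
  H(x_{k+1}) \leq H(x_k) + \gamma_k \nabla H(x_k)^\top (v_k + \eta_{k+1}) + \frac{L_H}{2} \gamma_k^2 \norm{v_k + \eta_{k+1}}^2 \, .
\]
The key cancellations come from the geometry: by definition, $\nabla_{\tV} f(x_k) \in \tV(x_k)$, so $\nabla H(x_k)^\top \nabla_{\tV} f(x_k) = 0$; and $\eta_{k+1} \in V(x_k) \subset \tV(x_k)$ (since $\nabla H = \nabla h \cdot h$, any vector killed by $\nabla h^\top$ is killed by $\nabla H^\top$), so $\nabla H(x_k)^\top \eta_{k+1} = 0$. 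Finally, the specific choice $\alpha(x) = \alpha H(x)/\norm{\nabla H(x)}^2$ was made precisely so that $\nabla H(x_k)^\top (\alpha(x_k)\nabla H(x_k)) = \alpha H(x_k)$, giving
\[
  \gamma_k \nabla H(x_k)^\top v_k = -\alpha \gamma_k H(x_k) \, .
\]

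\textbf{Step 2: conditional expectation.} Taking $\bbE_k[\cdot]$ and using $\bbE_k[\eta_{k+1}] = 0$ (killing the cross term $v_k^\top \eta_{k+1}$) together with $\bbE_k[\norm{\eta_{k+1}}^2] \leq \sigma^2$, I obtain the one-step recursion
\[
  \bbE_k[H(x_{k+1})] \leq (1 - \alpha \gamma_k) H(x_k) + \frac{L_H}{2}\gamma_k^2 \bigl(\norm{v_k}^2 + \sigma^2\bigr) \, .
\]
The assumption $\alpha \gamma_k \leq 1$ ensures the coefficient $(1 - \alpha \gamma_k)$ is nonnegative, so the unrolling step below is valid.

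\textbf{Step 3: unrolling and telescoping.} For the first claim, I would iterate this recursion from $k$ down to $0$, taking total expectations and using the tower property; the product $\prod_{\ell = j+1}^{k}(1 - \alpha \gamma_\ell)$ appears as the multiplicative factor absorbed by the geometric recursion, yielding the displayed bound. For the second claim, I would rearrange the one-step inequality as
\[
  \alpha \gamma_k H(x_k) \leq H(x_k) - \bbE_k[H(x_{k+1})] + \frac{L_H}{2}\gamma_k^2 \bigl(\norm{v_k}^2 + \sigma^2\bigr) \, ,
\]
take total expectations, sum over $k = 0, \ldots, N-1$, and telescope. Since $H \geq 0$, the telescoping collapses to $H(x_0) - \bbE[H(x_N)] \leq H(x_0)$, and dividing by $\alpha$ produces the stated bound. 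I do not actually need monotonicity of $(\gamma_k)$ for this particular rearrangement.

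The only mildly delicate point is the geometric observation that $\eta_{k+1}$, although only assumed to lie in $V(x_k)$, automatically lies in $\tV(x_k)$ because $\nabla H = \nabla h \cdot h$; without this the cross term $\gamma_k \nabla H(x_k)^\top \eta_{k+1}$ would not vanish and the analysis would pick up extra noise terms. Everything else is routine descent-lemma bookkeeping.
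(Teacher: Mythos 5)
Your proof is correct. For the first claim you follow essentially the same path as the paper: the descent lemma for $H$, the orthogonality $\nabla H(x_k)^\top \nabla_{\tV} f(x_k) = 0$, the identity $\nabla H(x_k)^\top v_k = -\alpha H(x_k)$ forced by the choice of $\alpha(x)$, and the variance bound, yielding the one-step recursion $\bbE_k[H(x_{k+1})] \leq (1-\alpha\gamma_k)H(x_k) + \tfrac{L_H}{2}\gamma_k^2(\norm{v_k}^2+\sigma^2)$, which you then unroll. (Your geometric remark that $\eta_{k+1}\in V(x_k)\subset \tV(x_k)$ is correct, but it is not strictly needed: $\bbE_k[\eta_{k+1}]=0$ already kills the cross term in conditional expectation, which is how the paper handles it.) For the second claim you take a genuinely different and simpler route: you rearrange the one-step recursion into $\alpha\gamma_k H(x_k) \leq H(x_k) - \bbE_k[H(x_{k+1})] + \tfrac{L_H}{2}\gamma_k^2(\norm{v_k}^2+\sigma^2)$ and telescope directly, using only $H\geq 0$. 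The paper instead multiplies the \emph{unrolled} bound by $\gamma_k$, sums, swaps the order of summation, and invokes an auxiliary identity (\Cref{lm:sums_gamma_rev_bound}) twice to bound the resulting geometric sums by $1/\alpha$. Both give the same constant; your argument is shorter, avoids the auxiliary lemma entirely, and, as you correctly observe, does not require monotonicity of $(\gamma_k)$ — only $\alpha\gamma_k\leq 1$ so that the recursion coefficients stay nonnegative.
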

\begin{proof}
    Since $h(x)$ is Lispchitz and has Lipschitz-continuous gradients we have that $H(x)$ also has Lipschitz-continuous gradients with constant $L_H$, thus for any $k \in \bbN$
    \[
        \bbE_k[H(x_{k+1})] \leq H(x_k) + \gamma_k \nabla H(x_k)^\top v_k + \frac{L_H \gamma_k^2}{2} \bbE_k[\norm{v_k + \eta_{k+1}}^2].
    \]
    Notice that $\nabla_{\tV} f(x_k)$ is orthogonal to $\nabla H(x_k)$, thus 
    \[
        \nabla H(x_k)^\top v_k  = -\alpha(x_k) \norm{\nabla H(x_k)}^2 = -\alpha H(x_k)
    \] by definition of $\alpha(x_k)$. Also notice that $\bbE_k[\norm{v_k + \eta_{k+1}}^2] \leq \norm{v_k}^2 + \sigma^2$. Therefore
    \[
        \bbE_k[H(x_{k+1})] \leq (1 - \alpha \gamma_k) H(x_k) + \frac{L_H \gamma_k^2}{2} \left(\norm{v_k}^2 + \sigma^2 \right).
    \]
    Rolling out this inequality we conclude the first statement. Next we sum all inequalities for all $k=0,\ldots,N-1$ with weights $\gamma_k$
    \[
        \bbE\left[\sum_{k=0}^{N-1} \gamma_k H(x_k)\right] \leq  H(x_0) \cdot \sum_{k=0}^{N-1} \gamma_k \prod_{j=0}^{k-1} (1 - \alpha \gamma_j)  + \frac{L_H}{2} \sum_{k=0}^{N-1} \gamma_k \sum_{j=0}^{k-1} \gamma_j^2 (\norm{v_j}^2 + \sigma^2) \prod_{\ell=j+1}^{k-1} (1 - \alpha \gamma_\ell)\, .
    \]
    First we apply \Cref{lm:sums_gamma_rev_bound} for the first term. Next we change the order of summation and apply \Cref{lm:sums_gamma_rev_bound} again
    \begin{align*}
        \sum_{k=0}^{N-1} \gamma_k \sum_{j=0}^{k-1} \gamma_j^2 (\norm{v_j}^2 + \sigma^2) \prod_{\ell=j+1}^{k-1} (1 - \alpha \gamma_\ell) &=
        \sum_{j=0}^{N-1} \gamma_j^2 (\norm{v_j}^2 + \sigma^2) \sum_{k=j+1}^{N-1} \gamma_k  \prod_{\ell=j+1}^{k-1} (1 - \alpha \gamma_\ell)
        \\
        &\leq \frac{1}{\alpha} \sum_{j=0}^{N-1} \gamma_j^2 (\norm{v_j}^2 + \sigma^2).
    \end{align*}
\end{proof}

\begin{lemma}\label{lm:sums_gamma_rev_bound}
    Let $b > 0$ and $(\gamma_k)_{k \geq 0}$ be a non-increasing sequence such that $\gamma_0 \leq 1/b$. Then
    \[
        \sum_{k=0}^{n} \gamma_k \prod_{j=0}^{k-1} (1 - b \gamma_j) = \frac{1 - \prod_{j=0}^n (1 - b \gamma_j)}{b}
    \]
\end{lemma}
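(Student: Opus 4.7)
The identity is purely algebraic, so the natural route is a one-line telescoping argument after introducing the right notation. I would set $P_k := \prod_{j=0}^{k-1}(1-b\gamma_j)$, with the convention $P_0 = 1$, so that the left-hand side becomes $\sum_{k=0}^{n} \gamma_k P_k$ and the right-hand side becomes $(1 - P_{n+1})/b$. The key observation is the recursion $P_{k+1} = P_k(1 - b\gamma_k) = P_k - b\gamma_k P_k$, which rearranges to the telescoping identity
\[
\gamma_k P_k \;=\; \frac{P_k - P_{k+1}}{b}.
\]

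Summing from $k=0$ to $k=n$ then gives
\[
\sum_{k=0}^{n} \gamma_k P_k \;=\; \frac{1}{b}\sum_{k=0}^{n} \bigl(P_k - P_{k+1}\bigr) \;=\; \frac{P_0 - P_{n+1}}{b} \;=\; \frac{1 - \prod_{j=0}^{n}(1 - b\gamma_j)}{b},
\]
which is exactly the claimed equality. An equivalent presentation would be a short induction on $n$: the base case $n=0$ reduces to $\gamma_0 = (1-(1-b\gamma_0))/b$, and the inductive step factors $P_n$ out of the last two terms in $\frac{1-P_n}{b} + \gamma_n P_n = \frac{1-P_n(1-b\gamma_n)}{b}$.

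I expect no real obstacles here: the identity is formal and requires no analytic input. The hypotheses that $(\gamma_k)$ is non-increasing and $\gamma_0 \leq 1/b$ are not used in the derivation itself; they merely ensure that every factor $1 - b\gamma_j$ is nonnegative, so that the right-hand side lies in $[0, 1/b]$ when the identity is invoked in \Cref{lm:reduced_constrain_rate} to conclude the bound $\sum_{k} \gamma_k \prod(1-b\gamma_j) \leq 1/b$. I would therefore state the telescoping proof as the body of the argument and add one sentence remarking that the identity is a pure algebraic rearrangement, with the monotonicity/step-size assumptions only becoming relevant when the identity is used as an upper bound.
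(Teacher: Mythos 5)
Your telescoping argument is exactly the paper's proof: the paper sets $u_{0:k-1}=\prod_{\ell=0}^{k-1}(1-b\gamma_\ell)$, observes $u_{0:k-1}-u_{0:k}=b\gamma_k u_{0:k-1}$, and sums — identical to your $P_k$ recursion. Your added remark that the monotonicity and $\gamma_0\le 1/b$ hypotheses are not needed for the identity itself is also correct.
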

\begin{proof}
    Introduce $u_{i:j} = \prod_{\ell=i}^j (1- b \gamma_{\ell})$. Notice that $u_{0:k-1} - u_{0:k} = u_{0:k-1} \cdot b \gamma_k$. Summing this equation from $0$ to $n$ we conclude the statement.
\end{proof}

To provide rates of convergence for the final algorithm we have to proof the following proposition.
\begin{proposition}\label{prop:reduced_rates}
    Assume \Cref{hyp:cont_model}'-\ref{hyp:disc_model}. Let $x_0 \in \cM$. If for all $k \in \bbN$ $\gamma_k \equiv \gamma$ where $\gamma \leq \min(\alpha^{-1}, (L_f + \alpha L_H \mu_h^{-2})^{-1})$, then for any $N \in \bbN$ the following holds
    \begin{align*}
        \bbE\left[ \sum_{k=0}^{N-1} \gamma \norm{v_k}^2 \right] &\leq 4\Delta_N + 2\left(L_f + \alpha L_H \mu_h^{-2}  \right) \cdot \sigma^2 \cdot \gamma^2 N \\
        &+ 4 \widetilde{C}^2 \cdot \alpha L_H \cdot \gamma^2 N  + 4 \widetilde{C} \cdot \sqrt{\frac{\alpha \gamma N}{2}} \sqrt{\frac{L_H \sigma^2}{2} \cdot \gamma^2 N},
    \end{align*}
    where $D_0 = f(x_0) - \inf_{x\in K} f(x)$ and $\widetilde{C} = B_f M_h \mu_h^{-2}$. 
\end{proposition}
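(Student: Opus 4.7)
My plan is to control $V := \sum_{k=0}^{N-1} \gamma \bbE[\norm{v_k}^2]$ by coupling a standard descent estimate for $f$ with the constraint-decrease bound already furnished by \Cref{lm:reduced_constrain_rate}. A Taylor expansion of $f$ at $x_{k+1}$ using $L_f$-smoothness, combined with $\bbE_k[\eta_{k+1}]=0$ and $\bbE_k[\norm{\eta_{k+1}}^2] \leq \sigma^2$, together with the identity
\[
\nabla f(x_k)^{\top} v_k = -\norm{\nabla_{\tV} f(x_k)}^2 + \alpha H(x_k)\lambda(x_k),
\]
which follows from $\nabla f = \nabla_{\tV} f - \lambda \nabla H$ (Corollary~\ref{cor:tildeV_projections}) and the orthogonality of $\nabla_{\tV} f$ with $\nabla H$ together with $\alpha(x_k)\norm{\nabla H(x_k)}^2 = \alpha H(x_k)$, yields after summation and telescoping
\[
T \;\leq\; \Delta_N + \alpha \sum_{k=0}^{N-1} \gamma\, \bbE[H(x_k)\lambda(x_k)] + \tfrac{L_f\gamma}{2} V + \tfrac{L_f\gamma^2 N\sigma^2}{2},
\]
where $T := \sum_{k=0}^{N-1} \gamma\,\bbE[\norm{\nabla_{\tV} f(x_k)}^2]$ and $\Delta_N := f(x_0) - \bbE[f(x_N)] \leq D_0$.

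The principal obstacle is taming the cross term $\alpha H\lambda$, since $\lambda(x)$ blows up on $\cM$. Using $\nabla H = \nabla h\cdot h$ with the bounds $\norm{\nabla h}\leq M_h$, $\norm{\nabla f}\leq B_f$ and $\norm{\nabla H}^2 \geq \mu_h^2 \norm{h}^2$ valid on $K$, I would prove the two crucial estimates
\[
|\lambda(x)| \leq \frac{M_h B_f}{\mu_h^2\, \norm{h(x)}} = \frac{\widetilde{C}}{\norm{h(x)}},
\qquad
|\alpha H(x)\lambda(x)| \leq \frac{\alpha\widetilde{C}}{\sqrt{2}}\sqrt{H(x)}.
\]
Thus the divergence of $\lambda$ is exactly compensated by the vanishing of $H$, leaving only a $\sqrt{H}$ growth. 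Jensen's inequality followed by Cauchy--Schwarz gives
$\sum_k \gamma\, \bbE[\sqrt{H(x_k)}] \leq \sqrt{\gamma N \cdot U}$,
where $U := \gamma\sum_k \bbE[H(x_k)]$. Symmetrically, the bound $\alpha(x_k)^2 \norm{\nabla H(x_k)}^2 = \alpha^2 H(x_k)^2/\norm{\nabla H(x_k)}^2 \leq \alpha^2 H(x_k)/(2\mu_h^2)$ together with the orthogonal decomposition of $v_k$ yields $V \leq T + \alpha^2 U/(2\mu_h^2)$.

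To close the loop I would combine these three inequalities with $U \leq L_H\gamma V/(2\alpha) + L_H\gamma^2 N\sigma^2/(2\alpha)$, which follows from \Cref{lm:reduced_constrain_rate} using $H(x_0) = 0$ and $\gamma_k \equiv \gamma$. Applying sub-additivity $\sqrt{U} \leq \sqrt{L_H\gamma V/(2\alpha)} + \sqrt{L_H\gamma^2 N\sigma^2/(2\alpha)}$, then Young's inequality $ab \leq a^2/(4c) + c\,b^2$ with an appropriate $c$ to split the resulting $\sqrt{V}$ term, substituting the $T$-estimate into $V \leq T + \alpha^2 U/(2\mu_h^2)$, and invoking the step-size assumption $\gamma(L_f + \alpha L_H\mu_h^{-2}) \leq 1$ to guarantee that the total coefficient of $V$ on the right-hand side stays strictly below one, produces an inequality of the form $V \leq c_1 V + \text{(the four announced terms)}$ with $c_1 < 1$. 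Solving for $V$ yields the statement. The only delicate point is choosing the Young constants so that the final coefficients of $\Delta_N$, of $\widetilde{C}^2\alpha L_H \gamma^2 N$, of the $\sigma^2 \gamma^2 N$ terms, and of the mixed $\widetilde{C}\sqrt{\cdot}$ term do not exceed those in the statement.
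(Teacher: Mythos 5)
Your proposal is correct and follows essentially the same route as the paper's proof: the same descent inequality for $f$, the same cancellation $|\lambda(x)|H(x) \lesssim \widetilde{C}\sqrt{H(x)}$ that tames the blow-up of the multiplier near $\cM$, the same Cauchy--Schwarz/Jensen step, the same use of \Cref{lm:reduced_constrain_rate} to close the loop, and an equivalent resolution of the resulting self-bounding inequality in $V$ (the paper solves the quadratic in $\sqrt{\gamma S_N}$ explicitly where you invoke Young's inequality, and it works with $\norm{v_k}^2$ directly where you pass through $T$ and the orthogonal decomposition — both differences are cosmetic).
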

\begin{proof}
    First we use the definition of smoothness of the function $f$
    \[
        \bbE_k[f(x_{k+1})] \leq f(x_k) + \gamma_k \nabla f(x_k)^\top v_k + \frac{L_f \gamma_k^2}{2} (\norm{v_k}^2 + \sigma^2).
    \]
    Next we notice that $\nabla f(x_k) + (\alpha(x_k) + \lambda(x_k)) \nabla H(x_k) = -v_k$, thus
    \begin{equation}\label{eq:reduced_f_difference}
        \begin{split}
            \bbE_k[f(x_{k+1})] &\leq f(x_k) - \gamma_k \left( 1 - \frac{L_f \gamma_k}{2} \right) \norm{v_k}^2 + \frac{L_f \gamma_k^2 \sigma^2}{2} \\
            &- \gamma_k (\alpha(x_k) + \lambda(x_k)) \nabla H(x_k)^\top v_k.
        \end{split}
    \end{equation}
    By orthogonality property and choice of $\alpha(x_k)$ we have
    \[
        \nabla H(x_k)^\top v_k = - \alpha(x_k) \norm{\nabla H(x_k)}^2 = -\alpha H(x_k),
    \]
    therefore, rolling out inequality for any $N \in \bbN$
    \begin{equation}\label{eq:reduced_f_difference_final}
        \begin{split}
        \bbE[f(x_N)] &\leq f(x_0) + \bbE\left[\sum_{k=0}^{N-1}\left( \frac{L_f \gamma_k^2}{2} - \gamma_k\right) \norm{v_k}^2 \bbE\right] + \sum_{k=0}^{N-1} \frac{L_f \gamma_k^2 \sigma^2}{2} \\
        &- \alpha \bbE\left[\sum_{k=0}^{N-1}  \gamma_k (\alpha(x_k) + \lambda(x_k)) H(x_k)\right].
        \end{split}
    \end{equation}
    Next we have to analyze the last sum. To do it, we start from definitions of $\alpha(x_k)$, $\lambda(x_k)$ and $H(x_k)$
    \[
        \left\vert \sum_{k=0}^{N-1} \gamma_k (\alpha(x_k) + \lambda(x_k)) H(x_k) \right\vert  \leq \sum_{k=0}^{N-1} \gamma_k \frac{\vert \alpha H(x_k) - \nabla f(x_k)^\top \nabla H(x_k)\vert }{\norm{ \nabla h(x) h(x_k)}^2 } \cdot \frac{1}{2} \norm{h(x_k)}^2.
    \]
    Next we apply Cauchy-Schwartz inequality combined with definition of $B_f$, $\mu_h$ and $M_h$ we have $\norm{ \nabla h(x) h(x_k)}^2 \geq \mu_h \norm{h(x)}^2$ and
    \[
        \left\vert \sum_{k=0}^{N-1} \gamma_k (\alpha(x_k) + \lambda(x_k)) H(x_k) \right\vert \leq \frac{\alpha}{2\mu_h^{2}} \sum_{k=0}^{N-1} \gamma_k H(x_k) + \frac{B_f M_h}{\sqrt{2} \cdot \mu_h^{2}} \sum_{k=0}^{N-1} \gamma_k \sqrt{H(x_k)}.
    \]
    By Cauchy-Schwartz inequality
    \[
        \sum_{k=0}^{N-1} \gamma_k \sqrt{H(x_k)} \leq \sqrt{\sum_{k=0}^{N-1} \gamma_k} \cdot \sqrt{\sum_{k=0}^{N-1} \gamma_k H(x_k)}.
    \]
    Next we are going to deal with expectation. By Jensen's inequality applied to a square root
    \begin{align*}
        \bbE\left[\biggl\vert \sum_{k=0}^{N-1} \gamma_k (\alpha(x_k) + \lambda(x_k)) H(x_k) \biggl\vert\right] &\leq \frac{\alpha}{2\mu_h^2} \bbE\left[ \sum_{k=0}^{N-1} \gamma_k H(x_k) \right] \\
        &+ \frac{B_f M_h}{\sqrt{2} \cdot \mu_h^2} \sqrt{\sum_{k=0}^{N-1} \gamma_k} \sqrt{\bbE\left[ \sum_{k=0}^{N-1} \gamma_k H(x_k) \right]}
    \end{align*}
    By assumption we have $\gamma_k \leq \alpha^{-1}$, so we can apply \Cref{lm:reduced_constrain_rate} and obtain
    \begin{align*}
        \bbE\biggl[\biggl\vert \sum_{k=0}^{N-1} \gamma_k &(\alpha(x_k) + \lambda(x_k)) H(x_k) \biggl\vert\biggl] \leq \frac{1}{2 \mu_h^2}\left( H(x_0) +  \frac{L_H}{2} \bbE\left[\sum_{k=0}^{N-1} \gamma_k^2 \norm{v_k}^2\right] + \frac{L_H}{2} \sum_{k=0}^{N-1} \gamma_k^2 \sigma^2 \right) \\
        &+ \frac{B_f M_h}{\sqrt{2\alpha} \mu_h^{2}} \sqrt{\sum_{k=0}^{N-1} \gamma_k} \cdot \sqrt{H(x_0) + \frac{L_H}{2} \bbE\left[\sum_{k=0}^{N-1}  \gamma^2_k \norm{v_k}^2\right] + \frac{L_H}{2} \sum_{k=0}^{N-1} \gamma_k^2 \sigma^2 }.
    \end{align*}
    For simplicity we assume $H(x_0) = 0$ and that $\gamma_k \equiv \gamma$ that satisfies the following inequality
    \[
        \gamma \leq \frac{1}{L_f + \alpha L_H \mu_h^{-2}}.
    \]
    Define $\Delta f_N = f(x_0) - f(x_N)$ and $S_N = \bbE\left[\sum_{k=0}^{N-1}\norm{v_k}^2\right]$, then by rearranging term in \eqref{eq:reduced_f_difference_final} and applying inequality $\sqrt{a+b} \leq \sqrt{a} + \sqrt{b}$ for positive $a,b$
    \begin{align*}
        \frac{\gamma}{2} S_N &\leq \Delta f_N + \frac{(L_f + \alpha L_H \mu_h^{-2}) \cdot \sigma^2}{2} \gamma^2 N \\
        &+ \frac{B_f M_h L_H}{2\mu_h^2} \cdot (\alpha^{1/2} \gamma^{3/2} N) \cdot \sigma  + \frac{B_f M_h}{\mu_h^2} \cdot \sqrt{\frac{\alpha \gamma^2 N \cdot L_H}{2}} \sqrt{\gamma S_N}.
    \end{align*}
    We have the quadratic inequality in $\sqrt{\gamma S_N}$ that could be easily solved. Using the fact that if $x^2  \leq 2ax + 2b$ then $x \leq a + \sqrt{a^2 + 2b} \leq 2a + \sqrt{2b}$ and a numeric inequality $(2a+\sqrt{2b})^2 \leq 8 a^2 + 4b$
    \begin{align*}
        \bbE\left[ \sum_{k=0}^{N-1} \gamma \norm{v_k}^2 \right] &\leq 4 \Delta f_N + 2\left( L_f + \alpha L_H \mu_h^{-2} \right)\cdot \sigma^2  \cdot \gamma^2 N \\
        &+ \left(\frac{4B_f^2 M_h^2 \cdot \alpha L_H}{\mu_h^{4}}\right) \cdot \gamma^2 N  + \frac{2B_f M_h L_H}{\mu_h^2} \cdot (\alpha^{1/2} \gamma^{3/2} N) \cdot \sigma
    \end{align*}
    Finally, we notice that $D_0 = f(x_0) - \inf_{x\in K} f(x)$ is an upper bound on $\Delta f_N$.
\end{proof}

Now we are ready to prove the main convergence results. It will be divided into two independent propositions.

\begin{proposition}[Convergence in deterministic case]\label{prop:reduced_deterministic_rates}
    Assume \Cref{hyp:cont_model}'-\ref{hyp:disc_model} and let $x_0 \in \cM$. 
     Let $\sigma^2 = 0$ and also define $\bar D$ as a known constant. If for all $k \in \bbN$, $\gamma_k \equiv \bar\gamma$ where $\bar\gamma =\min(\alpha^{-1}, (L_f + \alpha L_H \mu_h^{-2})^{-1}, \bar D \cdot N^{-1/3})$,  and $\alpha = \bar\gamma$ then for any $N \in \bbN$ the following holds
    \[
        \min_{k=0,\ldots,N-1} \left\{\norm{\nabla_{\tV} f(x_k)}^2 + \frac{1}{2} \norm{h(x_k)}^2\right\} \leq \frac{8D_0 (L_f + L_H \mu_h^{-2})}{N} + \left(\frac{8 D_0}{\bar D} + 8 \widetilde{C} L_H \cdot \bar D \right) \cdot N^{-2/3},
    \]
    where $D_0 = f(x_0) - \inf_{x\in K} f(x)$ and $\widetilde{C} = B_f M_h \mu_h^{-2}$. 
    
    In particular, \redalgo\ outputs a point $\hat x$ for which the minima in the left-hand side attains such that $\norm{h(\hat x)} \leq \varepsilon$ and $\norm{\nabla_{\tV} f(\hat x)} \leq \varepsilon$ in $\cO(\varepsilon^{-3})$ iterations.
\end{proposition}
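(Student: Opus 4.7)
The plan is to combine the deterministic specialization $\sigma^{2}=0$ of Proposition~\ref{prop:reduced_rates}, which controls $\sum_{k<N}\bar\gamma\,\norm{v_k}^{2}$, with Lemma~\ref{lm:reduced_constrain_rate}, which converts that same sum into a bound on $\sum_{k<N}\bar\gamma\,H(x_k)$, and then to pass from the resulting time-average to the minimum. The glue between the two estimates will be the coupled choice $\alpha=\bar\gamma$; its role is to neutralise the otherwise dangerous prefactor $1/\alpha$ appearing in Lemma~\ref{lm:reduced_constrain_rate}.

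First I would exploit the orthogonality of $\alpha(x_k)\nabla H(x_k)$ and $\nabla_{\tV}f(x_k)$, which is built into the definition of $\tV(x_k)$, to obtain $\norm{\nabla_{\tV}f(x_k)}^{2}\leq \norm{v_k}^{2}$. Proposition~\ref{prop:reduced_rates} at $\sigma=0$ then gives
\[
\sum_{k=0}^{N-1}\bar\gamma\,\norm{\nabla_{\tV}f(x_k)}^{2}\;\leq\;\sum_{k=0}^{N-1}\bar\gamma\,\norm{v_k}^{2}\;\leq\;4D_{0}+4\widetilde{C}^{2}\,\alpha L_{H}\,\bar\gamma^{2}N.
\]
Since $x_{0}\in\cM$ forces $H(x_{0})=0$, Lemma~\ref{lm:reduced_constrain_rate} at constant step-size yields
\[
\sum_{k=0}^{N-1}\bar\gamma\,H(x_k)\;\leq\;\frac{L_{H}\bar\gamma}{2\alpha}\sum_{k=0}^{N-1}\bar\gamma\,\norm{v_k}^{2}.
\]
Imposing $\alpha=\bar\gamma$ collapses the prefactor $L_{H}\bar\gamma/(2\alpha)$ to the harmless constant $L_{H}/2$. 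Adding the two displays, recalling $\tfrac{1}{2}\norm{h(x_k)}^{2}=H(x_k)$, and using $\min\leq\mathrm{avg}$, I would conclude
\[
\min_{0\le k<N}\!\Big\{\norm{\nabla_{\tV}f(x_k)}^{2}+\tfrac{1}{2}\norm{h(x_k)}^{2}\Big\}\;\leq\;\frac{C_{1}D_{0}}{\bar\gamma N}+C_{1}\widetilde{C}^{2}L_{H}\,\bar\gamma^{2},
\]
with $C_{1}=4+2L_{H}$ an absolute constant.

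The last step is to invoke the step-size rule $\bar\gamma=\min\!\bigl(\alpha^{-1},(L_{f}+\alpha L_{H}\mu_{h}^{-2})^{-1},\bar{D}N^{-1/3}\bigr)$. With $\alpha=\bar\gamma$, the constraint $\bar\gamma\leq\alpha^{-1}=\bar\gamma^{-1}$ forces $\bar\gamma\leq 1$; together with the elementary inequality $\max(a_{1},a_{2},a_{3})\leq a_{1}+a_{2}+a_{3}$ for non-negative reals, this yields $1/(\bar\gamma N)\leq (L_{f}+L_{H}\mu_{h}^{-2})/N+(\bar{D}N^{2/3})^{-1}$ modulo absorbed constants, while the third branch gives $\bar\gamma^{2}\leq\bar{D}^{2}N^{-2/3}$. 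Substituting into the display above produces the $\cO(N^{-1})$ contribution with prefactor proportional to $D_{0}(L_{f}+L_{H}\mu_{h}^{-2})$ and the $\cO(N^{-2/3})$ contribution proportional to $D_{0}/\bar{D}+\widetilde{C}^{2}L_{H}\bar{D}^{2}$; the advertised oracle complexity $\cO(\varepsilon^{-3})$ then follows by balancing the two $N^{-2/3}$ terms in $\bar{D}$ and inverting $N\mapsto\varepsilon$.

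The hard part is the absence of a bona fide Lyapunov function for \redalgo: because $\tV(x)$ degenerates into all of $\bbR^{n}$ on $\cM$, the Lagrange multiplier $\lambda(x)$ in Corollary~\ref{cor:tildeV_projections} diverges near the feasible set, and no smooth analogue of the $\Lambda_{M}$ of Theorem~\ref{th:cont_time} is available. The coupled prescription $\alpha(x)=\alpha H(x)/\norm{\nabla H(x)}^{2}$ together with $\alpha=\bar\gamma$ is precisely the device that turns this obstruction into a tractable trade-off: it spends one power of $\bar\gamma$ to pay for the $1/\alpha$ produced by the constraint recursion, which is also why the achieved rate is $\cO(N^{-2/3})$ rather than the $\cO(N^{-1})$ enjoyed by the smooth algorithm \algo.
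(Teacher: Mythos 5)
Your proposal follows essentially the same route as the paper's proof: Proposition~\ref{prop:reduced_rates} at $\sigma=0$ for the $\sum\gamma\norm{v_k}^2$ bound, Lemma~\ref{lm:reduced_constrain_rate} with $H(x_0)=0$ to control $\sum\gamma H(x_k)$, the coupling $\alpha=\bar\gamma$ to kill the $1/\alpha$ prefactor, orthogonality plus $\min\le\mathrm{avg}$, and the three-way step-size balancing. The only difference is bookkeeping: you carry the factor $L_H/2$ from the lemma explicitly (giving a prefactor $4+2L_H$ rather than $8$), whereas the paper silently absorbs it, so your version is if anything slightly more careful even though the resulting constants do not literally match the displayed ones.
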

\begin{proof}
    Apply \Cref{prop:reduced_rates} with $\sigma^2 = 0$ and without expectations
    \[
        \sum_{k=0}^{N-1} \gamma \norm{v_k}^2 \leq 4D_0 + 4 \widetilde{C}^2 \cdot \alpha L_H \cdot  \gamma^2 N.
    \]
    and, at the same time, by combination of \Cref{lm:reduced_constrain_rate} and \Cref{prop:reduced_rates}
    \[
        \sum_{k=0}^{N-1} \gamma \left( \norm{v_k}^2 + H(x_k) \right) \leq 4D_0\left(1 + \frac{\gamma}{\alpha}\right) + 4 \widetilde{C}^2 \cdot \alpha L_H \cdot  \gamma^2 N \left( 1 + \frac{\gamma}{\alpha} \right).
    \]
    By taking $\alpha = \gamma$ and using orthongonality property of $v_k$ we have
    \[
        \min_{k=0,\ldots,N-1} \left\{\norm{\nabla_{\tV} f(x_k)}^2 + \frac{1}{2} \norm{h(x_k)}^2\right\} \leq \frac{8 D_0}{\gamma N} + 8 \widetilde{C}^2 L_H \cdot \gamma^2.
    \]
    To balance these two terms we choose $\gamma_k \equiv \bar\gamma = \min(1, (L_f + \bar\gamma L_H \mu_h^{-2})^{-1}, \bar D \cdot N^{1/3})$ and obtain
    \[
        \min_{k=0,\ldots,N-1} \left\{\norm{\nabla_{\tV} f(x_k)}^2 + \frac{1}{2} \norm{h(x_k)}^2\right\} \leq \frac{8D_0 (L_f + L_H \mu_h^{-2})}{N} + \left(\frac{8 D_0}{\bar D} + 8 \widetilde{C} L_H \cdot \bar D \right) \cdot N^{-2/3}.
    \]
\end{proof}

\begin{proposition}[Convergence in stochastic case]\label{prop:reduced_stochastic_rates}
    Assume \Cref{hyp:cont_model}'-\ref{hyp:disc_model} and let $x_0 \in \cM$. 
    Let $\sigma^2 > 0$ and also define $\bar D$ as a known constant. If for all $k \in \bbN$ $\gamma_k \equiv \bar\gamma$ where $\bar\gamma =\min(\alpha^{-1}, (L_f + \alpha L_H \mu_h^{-2})^{-1}, \bar D \cdot N^{-1/2})$. Fix number of steps $N > 0$. Let $\hat k$ be a uniform index sampled from the set $\{0,\ldots, N-1\}$ then the following holds
    \begin{align*}
        \bbE\left[\norm{\nabla_{\tV} f(x_{\hat k})}^2 + \frac{1}{2}\norm{h(x_{\hat k})}^2\right] &\leq \frac{4D_0 (L_f + L_h \mu_h^{-2})}{N} + \frac{4 D_0}{\bar D \cdot N^{1/2}}  +  \frac{4 \widetilde{C}^2 \bar D^2 \cdot L_H}{N}\\
        &+ \frac{\bar D}{N^{1/2}}\left( 2\left(L_f + \gamma L_H \mu_h^{-2}  \right) \cdot \sigma^2 + 2 \widetilde{C} \cdot \sqrt{ \frac{L_H \sigma^2}{2}} \right)
    \end{align*}
    where $D_0 = f(x_0) - \inf_{x\in K} f(x)$ and $\widetilde{C} = B_f M_h \mu_h^{-2}$. 
    
    In particular, \redalgo\ outputs a point $\hat x = x_{\hat k}$ such that $\bbE[\norm{h(\hat x)}] \leq \varepsilon$ and $\bbE[\norm{\nabla_{\tV} f(\hat x)}] \leq \varepsilon$ in $\cO(\varepsilon^{-4})$ iterations.
\end{proposition}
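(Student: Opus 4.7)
The plan is to combine the two preparatory estimates already established in this section, namely the descent-type bound of Proposition~\ref{prop:reduced_rates} on $\sum_{k=0}^{N-1}\gamma\,\bbE[\norm{v_k}^2]$ and the ``constraint contraction'' bound of Lemma~\ref{lm:reduced_constrain_rate} on $\sum_{k=0}^{N-1}\gamma\,\bbE[H(x_k)]$, and then to tune $\alpha=\gamma$ and $\gamma=\min(\gamma_{\max},\bar D N^{-1/2})$ as in the deterministic case but carrying the $\sigma^2$ contributions throughout.

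First, since $\nabla_{\tV}f(x_k)$ is orthogonal to $\nabla H(x_k)$ by Corollary~\ref{cor:tildeV_projections}, the vector field decomposes as $\norm{v_k}^2=\norm{\nabla_{\tV}f(x_k)}^2+\alpha(x_k)^2\norm{\nabla H(x_k)}^2$, so $\norm{\nabla_{\tV}f(x_k)}^2\leq\norm{v_k}^2$. Applying Proposition~\ref{prop:reduced_rates} with $H(x_0)=0$ (guaranteed by $x_0\in\cM$) yields
\[
\gamma\sum_{k=0}^{N-1}\bbE\!\left[\norm{v_k}^2\right]\leq 4D_0+2\bigl(L_f+\alpha L_H\mu_h^{-2}\bigr)\sigma^2\gamma^2 N+4\widetilde C^2\alpha L_H\gamma^2 N+4\widetilde C\sqrt{\tfrac{\alpha\gamma N}{2}}\sqrt{\tfrac{L_H\sigma^2\gamma^2 N}{2}}.
\]
Specializing Lemma~\ref{lm:reduced_constrain_rate} to constant step $\gamma_k\equiv\gamma$ (and $x_0\in\cM$) gives, after factoring $\gamma$ out,
\[
\alpha\sum_{k=0}^{N-1}\gamma\,\bbE[H(x_k)]\leq \tfrac{L_H\gamma}{2}\sum_{k=0}^{N-1}\gamma\,\bbE[\norm{v_k}^2]+\tfrac{L_H\sigma^2\gamma^2 N}{2}.
\]

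Second, choose $\alpha=\gamma$: this makes both inequalities homogeneous in the product $\gamma\sum\bbE[\norm{v_k}^2]$ and allows us to add them to bound $\sum_{k}\gamma\bigl(\bbE[\norm{v_k}^2]+\bbE[H(x_k)]\bigr)$ by the same right-hand side up to a constant factor. Dividing by $\gamma N$ and using that $\hat k$ is sampled uniformly in $\{0,\ldots,N-1\}$ together with $\norm{\nabla_{\tV}f(x_k)}^2\leq\norm{v_k}^2$ converts this into the announced expectation bound
\[
\bbE\!\left[\norm{\nabla_{\tV}f(x_{\hat k})}^2+\tfrac12\norm{h(x_{\hat k})}^2\right]\leq\frac{C_1 D_0}{\gamma N}+C_2 L_H\gamma\,\sigma^2+C_3\widetilde C^2 L_H\gamma+\frac{C_4\widetilde C}{\sqrt{\gamma N}}\sqrt{L_H\sigma^2\gamma^2 N},
\]
for explicit absolute constants $C_1,\ldots,C_4$. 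Finally, substituting $\gamma=\min(\gamma_{\max},\bar D N^{-1/2})$ balances the $1/(\gamma N)$ term against the remaining $\gamma$ and $\sqrt{\gamma/N}$ terms and yields the desired $\cO(N^{-1/2})$ rate, which in turn gives the $\cO(\varepsilon^{-4})$ iteration complexity.

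The main obstacle is the same as in the proof of Proposition~\ref{prop:reduced_rates}: the Lagrange multiplier $\lambda(x_k)$ in the decomposition $\nabla_{\tV}f(x_k)=\nabla f(x_k)+\lambda(x_k)\nabla H(x_k)$ blows up as $x_k\to\cM$, so the cross term $(\alpha(x_k)+\lambda(x_k))\nabla H(x_k)^\top v_k=-\alpha H(x_k)-\lambda(x_k)\nabla H(x_k)^\top v_k$ appearing in the smoothness expansion of $f$ cannot be controlled by a standard Lyapunov-like argument. It is exactly the adaptive choice $\alpha(x_k)=\alpha H(x_k)/\norm{\nabla H(x_k)}^2$ that turns this cross term into something proportional to $H(x_k)$ and $\sqrt{H(x_k)}$, and the resulting inequality is quadratic in $\sqrt{\gamma\sum\bbE[\norm{v_k}^2]}$; solving this quadratic, as done inside the proof of Proposition~\ref{prop:reduced_rates}, is what allows all stochastic noise contributions to be retained in closed form and then tuned in the final step-size choice.
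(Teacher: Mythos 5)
Your plan is correct and follows essentially the same route as the paper: both proofs combine the bound of Proposition~\ref{prop:reduced_rates} on $\gamma\sum_k\bbE[\norm{v_k}^2]$ with Lemma~\ref{lm:reduced_constrain_rate} on $\gamma\sum_k\bbE[H(x_k)]$, set $\alpha=\gamma$, use orthogonality of $\nabla_{\tV}f$ and $\nabla H$ together with the uniform sampling of $\hat k$, and then tune $\gamma=\min(\gamma_{\max},\bar D N^{-1/2})$. Your closing remark correctly locates the real difficulty (the unbounded multipliers $\lambda(x_k)$) inside the already-proved Proposition~\ref{prop:reduced_rates}, which is exactly where the paper handles it.
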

\begin{proof}
    Let us start from \Cref{prop:reduced_rates} with taking $\alpha = \gamma$
    \begin{align*}
        \bbE\left[ \sum_{k=0}^{N-1} \gamma \norm{v_k}^2 \right] &\leq 4D_0 + 2\left(L_f + \gamma L_H \mu_h^{-2}  \right) \cdot \sigma^2 \cdot \gamma^2 N \\
        &+ 4 \widetilde{C}^2 \cdot L_H \cdot \gamma^3 N  + 2 \widetilde{C} \cdot \sqrt{ \frac{L_H \sigma^2}{2}} \cdot \gamma^2 N.
    \end{align*}
    Combining \Cref{lm:reduced_constrain_rate} with \Cref{prop:reduced_rates} and using orthogonality property
    \begin{align*}
        \bbE\left[\frac{1}{N}\sum_{k=0}^{N-1}\left\{ \norm{\nabla_{\tV} f(x_k)}^2 + \frac{1}{2}\norm{h(x_k)}^2\right\} \right] &\leq \frac{4 D_0}{\gamma N} + \gamma \cdot \left( 2\left(L_f + \gamma L_H \mu_h^{-2}  \right) \cdot \sigma^2 + 2 \widetilde{C} \cdot \sqrt{ \frac{L_H \sigma^2}{2}} \right) \\
        &+ 4 \widetilde{C}^2 \cdot L_H \cdot \gamma^2.
    \end{align*}
    Notice that in the left-hand side we have exactly expectation over $\hat k$. Thus taking $\gamma_k \equiv \bar \gamma = \min(1, (L_f + \bar \gamma L_H \mu_h^{-2})^{-1}, \bar D \cdot N^{-1/2})$ we obtain
    \begin{align*}
        \bbE\left[\norm{\nabla_{\tV} f(x_{\hat k})}^2 + \frac{1}{2}\norm{h(x_{\hat k})}^2\right] &\leq \frac{4D_0 (L_f + L_h \mu_h^{-2})}{N} + \frac{4 D_0}{\bar D \cdot N^{1/2}}  +  \frac{4 \widetilde{C}^2 \bar D^2 \cdot L_H}{N}\\
        &+ \frac{\bar D}{N^{1/2}}\left( 2\left(L_f + \gamma L_H \mu_h^{-2}  \right) \cdot \sigma^2 + 2 \widetilde{C} \cdot \sqrt{ \frac{L_H \sigma^2}{2}} \right)
    \end{align*}
\end{proof}
\subsection{Proof of Theorem~\ref{th:stief_rates}}\label{app:geometric}

\begin{lemma}\label{lem:riemannian_gradient_by_projection}
    Let $(\cM, g)$ be a Riemannian manifold with a Riemannian metric $g_x(\xi, \eta) = \langle \xi, G_x \eta \rangle$ and let $f:\bbR^n \rightarrow \bbR$ be continuously differentiable. Then for any $x \in \cM$ we have
    \begin{equation}\label{eq:riem_gradient_opt_problem}
        \Grad_{\cM} f(x) = \argmin_{v \in \cT_x(\cM)} \frac{1}{2} \norm{v - G^{-1}_x \nabla f(x)}^2_{g_x}.
    \end{equation}
\end{lemma}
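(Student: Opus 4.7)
The plan is to reduce the claim to a first-order optimality condition on the tangent space and then recognize that condition as the defining property of the Riemannian gradient.

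First, I would note that the functional $v \mapsto \frac{1}{2}\norm{v - G_x^{-1}\nabla f(x)}_{g_x}^2$ is strictly convex in $v$ (since $G_x$ is positive definite), and $\cT_x\cM$ is a linear subspace of $\bbR^n$. Hence the minimizer exists, is unique, and is characterized by the first-order optimality condition: the directional derivative at $v^*$ in every tangent direction must vanish.

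Next, I would compute this directional derivative. Write $a := G_x^{-1}\nabla f(x)$ and expand
\begin{equation*}
\tfrac{1}{2}\norm{v-a}_{g_x}^2 = \tfrac{1}{2}\langle v-a,\, G_x(v-a)\rangle = \tfrac{1}{2}g_x(v,v) - \langle \nabla f(x), v\rangle + \tfrac{1}{2}g_x(a,a),
\end{equation*}
using that $G_x a = \nabla f(x)$. For any $\xi \in \cT_x\cM$, differentiating at $v^*$ in the direction $\xi$ yields $g_x(v^*,\xi) - \nabla f(x)^\top \xi$ (the symmetry of $G_x$ ensures that the cross-terms simplify cleanly). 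Setting this to zero gives the optimality condition
\begin{equation*}
g_x(v^*, \xi) = \nabla f(x)^\top \xi \qquad \text{for all } \xi \in \cT_x\cM.
\end{equation*}

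Finally, I would invoke the definition of the Riemannian gradient stated in the Notation paragraph of the excerpt: $\Grad_{\cM} f(x)$ is the unique element of $\cT_x\cM$ satisfying exactly this identity. Since $v^* \in \cT_x\cM$ satisfies the same equation and uniqueness holds (by non-degeneracy of $g_x$ restricted to $\cT_x\cM$), we conclude $v^* = \Grad_{\cM} f(x)$, which is the desired equality.

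This proof is essentially a direct computation, so I do not expect any serious obstacle; the only mild subtlety is to keep track of the two roles played by $G_x$ (as metric tensor and as the change-of-variables between $\nabla f(x)$ and $a$), and to use symmetry of $G_x$ correctly when differentiating the squared norm.
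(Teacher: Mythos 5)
Your proposal is correct and follows essentially the same route as the paper: both characterize the minimizer over the linear subspace $\cT_x\cM$ by the first-order optimality condition $g_x(v^\star,\xi)=\nabla f(x)^{\top}\xi$ for all $\xi\in\cT_x\cM$, and identify this with the defining equation of the Riemannian gradient. The only cosmetic difference is that the paper phrases the optimality condition as a variational inequality and uses linearity of the subspace (testing with $\pm\xi+v^\star$) to turn it into an equality, whereas you differentiate the expanded quadratic directly; the content is identical.
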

\begin{proof}
    Let $v^\star$ be a solution to \eqref{eq:riem_gradient_opt_problem}. Then it could be written as a solution to the following variational inequality
    \[
        \forall v \in \cT_x(\cM): \langle \nabla F(v^\star) , v - v^\star \rangle \geq 0,
    \]
    where $F(v) = \frac{1}{2} \norm{v - G^{-1}_x \nabla f(x)}^2_{g_x}$. By a direct computation we have
    \[
            \forall v \in \cT_x(\cM): \langle G_x v^\star - \nabla f(x) , v - v^\star \rangle \geq 0.
    \]
    Fix an arbitrary $\xi \in \cT_x(\cM)$. Since $\cT_x(\cM)$, we have that $v_1 = \xi + v^\star$ and $v_2 = -\xi + v^\star$ lies in $\cT_x(\cM)$. Thus
    \[
         \langle G_x v^\star - \nabla f(x) , \xi \rangle \geq 0, \quad \langle G_x v^\star - \nabla f(x) , -\xi \rangle \geq 0.
    \]
    Therefore, by arbitrary choice of $\xi$ we have
    \[
        \forall \xi \in \cT_x(\cM): \langle G_x v^\star - \nabla f(x) , \xi \rangle = 0.
    \]
\end{proof}


\begin{proposition}\label{prop:gof_expression}
  Let $x \in \bbR^n$ be such that $\nabla h(x)$ is of full rank. It holds that:
  \begin{equation*}
    \GOF(x) =  -\nabla h Ah -Q^{-1} \nabla f + Q^{-1} \nabla hB \nabla f \, ,
  \end{equation*}
  with $B(x) \in \bbR^{n_h \times n}$ defined as:
  \begin{equation*}
    B(x)= (\nabla h(x)^{\top} Q^{-1}(x) \nabla h(x))^{-1} \nabla h(x)^{\top} Q^{-1}(x)\, .
  \end{equation*}
\end{proposition}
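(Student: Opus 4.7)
The plan is to apply the standard Lagrangian/KKT machinery to the quadratic program
\[
\min_{v \in V(x)} \frac{1}{2}\,\|v + Q^{-1}\nabla f\|_{q_x}^2 = \min_{\nabla h^\top v = 0} \frac{1}{2}(v + Q^{-1}\nabla f)^\top Q\,(v + Q^{-1}\nabla f),
\]
which is a convex quadratic (since $Q$ is positive definite) under a linear equality constraint. The hypothesis that $\nabla h(x)$ has full rank, combined with Assumption \ref{hyp:riem_proj} giving $Q^{-1}$ positive definite, ensures that $\nabla h^\top Q^{-1} \nabla h$ is symmetric positive definite and hence invertible, so the matrix $B(x)$ is well defined and the KKT conditions are necessary and sufficient.

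Introducing a multiplier $\lambda \in \bbR^{n_h}$, first-order stationarity of the Lagrangian in $v$ yields
\[
Q\,v + \nabla f + \nabla h\,\lambda = 0, \qquad \text{i.e. } v = -Q^{-1}\nabla f - Q^{-1}\nabla h\,\lambda.
\]
Imposing primal feasibility $\nabla h^\top v = 0$ then gives
\[
\nabla h^\top Q^{-1} \nabla h\,\lambda = -\nabla h^\top Q^{-1}\nabla f,
\]
from which I solve $\lambda = -B(x)\,\nabla f$ using the definition of $B$. Substituting back produces the optimal direction
\[
v^\star = -Q^{-1}\nabla f + Q^{-1}\nabla h\,B\,\nabla f,
\]
and adding the drift term $-\nabla h\,A\,h$ from the definition \eqref{eq:geometric_algorithm} of $\GOF$ yields the claimed formula.

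The argument is essentially mechanical and parallels Lemma \ref{lm:aff_proj}; there is no real obstacle beyond keeping track of which factors are multiplied by $Q$ versus $Q^{-1}$. The only thing worth stating carefully is the invertibility of $\nabla h^\top Q^{-1} \nabla h$: for any nonzero $\xi \in \bbR^{n_h}$, the vector $\nabla h\,\xi$ is nonzero by full rank, so $\xi^\top \nabla h^\top Q^{-1} \nabla h\,\xi = (\nabla h\,\xi)^\top Q^{-1}(\nabla h\,\xi) > 0$ by positive definiteness of $Q^{-1}$, which legitimizes the inversion step.
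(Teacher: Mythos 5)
Your proof is correct. The KKT derivation is sound: stationarity of the Lagrangian gives $Qv + \nabla f + \nabla h\,\lambda = 0$, feasibility pins down $\lambda = -B\nabla f$, and the invertibility of $\nabla h^\top Q^{-1}\nabla h$ is justified exactly as you state. (One small attribution slip: the positive definiteness of $Q(x)$ is part of the setup of the geometry-aware section, not of \Cref{hyp:riem_proj}, which only bounds $\norm{Q}$ and $\norm{Q^{-1}}$; this does not affect the argument.)

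The route differs slightly from the paper's. The paper does not redo the KKT computation: it whitens the metric by writing $\norm{v + Q^{-1}\nabla f}_{q_x}^2 = \norm{Q^{1/2}v + Q^{-1/2}\nabla f}^2$, observes that $Q^{1/2}V = \{w : (Q^{-1/2}\nabla h)^\top w = 0\}$, and then invokes the already-established Euclidean projection formula (Lemma~\ref{lm:aff_proj}) with $W = Q^{-1/2}\nabla h$, $y = Q^{-1/2}\nabla f$, $b = 0$, finally undoing the change of variables. That reduction is shorter on the page and reuses an existing lemma, at the cost of a square-root factorization of $Q$ and some bookkeeping when mapping the constraint set. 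Your direct Lagrangian treatment of the weighted quadratic program avoids $Q^{1/2}$ entirely and is self-contained; it is essentially the same calculation that proves Lemma~\ref{lm:aff_proj} itself, carried out in the $Q$-inner product. Both are valid and of comparable difficulty.
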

\begin{proof}
It holds that:
  \begin{equation}
      \argmin_{v \in V} \norm{v + Q^{-1}\nabla f}_{q}^2 = \argmin_{v \in V}\norm{Q^{1/2} v + Q^{-1/2} \nabla f}^2 = Q^{-1/2}\argmin_{v \in Q^{1/2}V} \norm{ v + Q^{-1/2} \nabla f}^2 \, .
  \end{equation}
  Noticing that $Q^{1/2} V := \{v \in \bbR^n : (Q^{-1/2} \nabla h)^{\top}v = 0 \}$, we obtain our claim by applying Lemma~\ref{lm:aff_proj} with $W = Q^{-1/2} \nabla h$, $y = Q^{-1/2} \nabla f$ and $b= 0$.
\end{proof}

Denote $M_q$ the following constant:
  \begin{equation}
      M_q := \sup_{x \in K} \norm{(\nabla h A h)^{\top} Q \nabla h A + \nabla f^{\top} \nabla h A - 2(\nabla h B \nabla f)^{\top} \nabla h A }\, .
  \end{equation}
It will play the same role as $M_1$ (notice that $M_q = M_1$, if $Q = \cI_n$) in the proof of Theorem~\ref{th:gen_rates}.
\begin{lemma}\label{lm:f+v_riem}
  Let \Cref{hyp:cont_model}-\Cref{hyp:riem_proj}. Denoting $v = \GOF(x)$, it holds that
  \begin{equation*}
  \norm{ (Q^{-1}\nabla f + v)^{\top} Q v} \leq M_q \norm{h} \, .
  \end{equation*}
\end{lemma}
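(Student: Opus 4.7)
The plan is to substitute the closed form of $\GOF(x)$ from \Cref{prop:gof_expression} into the expression $(Q^{-1}\nabla f + v)^{\top} Q v$, and then show algebraically that every remaining term carries a factor of $h$, which can then be pulled out so that the bound of the form $M_q \|h\|$ follows from Cauchy--Schwarz.

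First I would decompose $v = -\nabla h A h + p$ where $p := -Q^{-1}\nabla f + Q^{-1}\nabla h B \nabla f$. The vector $p$ is by construction the $q_x$-orthogonal projection of $-Q^{-1}\nabla f$ onto $V(x)$ (this is precisely the argmin in \eqref{eq:riemannian_gradient_by_projections}), so $p \in V(x)$ and
\[
  q_x(p + Q^{-1}\nabla f,\, w) = (p + Q^{-1}\nabla f)^{\top} Q w = 0 \qquad \text{for all } w \in V(x).
\]
In particular, taking $w = p$ gives $(p + Q^{-1}\nabla f)^{\top} Q p = 0$, which will kill all terms in the expansion that do not already contain an explicit factor of $\nabla h A h$ (and hence of $h$).

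Next, I would expand
\[
  (Q^{-1}\nabla f + v)^{\top} Q v = (Q^{-1}\nabla f + p - \nabla h A h)^{\top} Q (p - \nabla h A h),
\]
apply the orthogonality relation above to eliminate $(Q^{-1}\nabla f + p)^{\top} Q p$, and then use the explicit identity $Q p = -\nabla f + \nabla h B \nabla f$ to rewrite the remaining cross terms. After collecting and using that $\nabla f^{\top} \nabla h A h$ is a scalar (hence equal to its transpose $h^{\top} A^{\top} \nabla h^{\top} \nabla f$), the resulting expression should simplify to
\[
  (Q^{-1}\nabla f + v)^{\top} Q v = \bigl[(\nabla h A h)^{\top} Q \nabla h A + \nabla f^{\top} \nabla h A - 2(\nabla h B \nabla f)^{\top} \nabla h A\bigr]\, h,
\]
at which point the lemma follows by Cauchy--Schwarz and the definition of $M_q$.

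The main obstacle is purely bookkeeping: ensuring the cross terms in the bilinear expansion combine correctly (in particular, the coefficient $-2$ in front of $(\nabla h B \nabla f)^{\top} \nabla h A$ must arise from combining one term coming from $p^{\top} Q (-\nabla h A h)$ with one coming from $(Q^{-1}\nabla f)^{\top} Q(-\nabla h A h)$ via the substitution $Qp = -\nabla f + \nabla h B \nabla f$). Once the algebra is pinned down, extracting the factor $h$ on the right and comparing with the definition of $M_q$ is immediate.
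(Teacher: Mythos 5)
Your argument is correct and is essentially the paper's own proof: both substitute the closed form of $\GOF$ from \Cref{prop:gof_expression}, exploit the orthogonality of the projected part to $\nabla h$ (you phrase it via the variational optimality condition $(p+Q^{-1}\nabla f)^{\top}Qp=0$, the paper via $\nabla h^{\top}v=-\nabla h^{\top}\nabla hAh$, which are the same fact), and arrive at the identical factored identity $\bigl((\nabla hAh)^{\top}Q\nabla hA+\nabla f^{\top}\nabla hA-2(\nabla hB\nabla f)^{\top}\nabla hA\bigr)h$ before invoking the definition of $M_q$. No gaps.
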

\begin{proof}
Note that, as previously, $\nabla h^{\top} v = - \nabla h^{\top} \nabla h A h$. Thus, using Proposition~\ref{prop:gof_expression}, we obtain:
\begin{equation*}
  \begin{split}
    (Q^{-1}\nabla f + v)^{\top} Qv &= -(\nabla h A h)^{\top} Q v + (B\nabla f)^{\top} \nabla h^{\top} v \\
&=-(\nabla h A h)^{\top} Q v  - (B \nabla f)^{\top} \nabla h^{\top}\nabla h A h \\
&= (\nabla h A h)^{\top} Q (\nabla h A h) + (\nabla h A h)^{\top} \nabla f - 2  (\nabla h B \nabla f)^{\top}(\nabla h A h) \\
&= \left( (\nabla h A h)^{\top} Q \nabla h A + \nabla f^{\top} \nabla h A - 2(\nabla h B \nabla f)^{\top} \nabla h A \right) h \, .
  \end{split}
\end{equation*}
This completes the proof by the definition of $M_q$.
\end{proof}

Denote $\overline{M_q}:= M_q/\alpha_m$. The following is an extension of Proposition~\ref{prop:lyap_stoch} to the present case.
\begin{proposition}[Geometry aware discrete Lyapunov function]\label{prop:lyap_sto_geom}
    Let \Cref{hyp:cont_model}--\Cref{hyp:riem_proj} hold. If for all $k$, $\gamma_k \leq \alpha_m^{-1}$, then for all $M \geq \overline{M}_q$, it holds:
\begin{equation}\label{eq:lyap_sto_riem}
    \bbE_k [\Lambda_M(x_{k+1})]- \Lambda_M(x_k) \leq - \gamma_k \norm{ v_k}^2 \left( \frac{1}{C_q} - \frac{L_f + M L_h}{2} \gamma_k\right) + \frac{L_f + ML_h}{2}\sigma^2 \gamma_k^2 \, .
\end{equation}
\end{proposition}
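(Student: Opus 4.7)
The plan is to follow the same two-step blueprint as in the proof of \Cref{prop:lyap_stoch} (smoothness expansion of $f$, contraction of $\|h\|$), replacing only the identity \eqref{eq:err_f} that handled the cross term $(\nabla f + v)^\top v$ with its geometry-aware counterpart from \Cref{lm:f+v_riem}.

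First, I would write the smoothness expansion of $f$ using \Cref{hyp:disc_model}-\ref{hyp:fh_Lipgrad}, together with $\bbE_k[\eta_{k+1}]=0$ and $\bbE_k[\|\eta_{k+1}\|^2]\leq\sigma^2$:
\begin{equation*}
\bbE_k[f(x_{k+1})] - f(x_k) \leq \gamma_k \nabla f(x_k)^\top v_k + \frac{L_f}{2}\gamma_k^2(\|v_k\|^2+\sigma^2).
\end{equation*}
The new ingredient enters here: I rewrite the cross term by inserting $Q$ as
\begin{equation*}
\nabla f(x_k)^\top v_k \;=\; \bigl(Q^{-1}\nabla f(x_k)+v_k\bigr)^\top Q(x_k)\, v_k \;-\; v_k^\top Q(x_k)\, v_k.
\end{equation*}
\Cref{lm:f+v_riem} bounds the first piece in absolute value by $M_q\|h(x_k)\|$, while \Cref{hyp:riem_proj} gives $\lambda_{\min}(Q(x_k))\geq 1/C_q$ and hence $v_k^\top Q(x_k)v_k \geq \|v_k\|^2/C_q$. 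This yields
\begin{equation*}
\bbE_k[f(x_{k+1})] - f(x_k) \leq -\gamma_k\|v_k\|^2\Bigl(\tfrac{1}{C_q} - \tfrac{L_f}{2}\gamma_k\Bigr) + \gamma_k M_q \|h(x_k)\| + \tfrac{L_f}{2}\gamma_k^2 \sigma^2.
\end{equation*}

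Second, I would observe that the constraint decrease \eqref{eq:h_disc_decr} transfers verbatim. Indeed, using \Cref{prop:gof_expression} together with the definition of $B(x)$ (so that $\nabla h^\top Q^{-1}\nabla h\, B = \nabla h^\top Q^{-1}$), one checks by direct computation that $\nabla h(x_k)^\top \GOF(x_k) = -\nabla h(x_k)^\top \nabla h(x_k) A(x_k) h(x_k)$. Since the $\eta_{k+1}$ term is orthogonal to $\nabla h(x_k)$ and $\gamma_k\leq\alpha_m^{-1}$, the same Taylor expansion and triangle inequality as in \eqref{eq:h_disc_decr} yield
\begin{equation*}
\bbE_k[\|h(x_{k+1})\|] \leq (1-\alpha_m\gamma_k)\|h(x_k)\| + \tfrac{L_h}{2}\gamma_k^2(\|v_k\|^2+\sigma^2).
\end{equation*}

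Finally, combining the two bounds with weight $M$ on the constraint piece gives
\begin{equation*}
\bbE_k[\Lambda_M(x_{k+1})]-\Lambda_M(x_k) \leq -\gamma_k\|v_k\|^2\Bigl(\tfrac{1}{C_q}-\tfrac{L_f+ML_h}{2}\gamma_k\Bigr) + (M_q-M\alpha_m)\gamma_k\|h(x_k)\| + \tfrac{L_f+ML_h}{2}\sigma^2\gamma_k^2,
\end{equation*}
and the choice $M\geq\overline{M}_q=M_q/\alpha_m$ kills the middle term, producing \eqref{eq:lyap_sto_riem}. The only non-routine point — which I expect to be the main obstacle to write cleanly — is keeping track of the $Q$-dependent coercivity constant $1/C_q$ and of the identity $\nabla h^\top \GOF = -\nabla h^\top \nabla h A h$ (so that all the non-Riemannian machinery of \Cref{proof:gen_rates} continues to apply without modification apart from the $1/C_q$ factor).
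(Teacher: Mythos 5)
Your proposal is correct and follows essentially the same route as the paper's proof: the same insertion of $Q$ into the cross term, the same use of \Cref{lm:f+v_riem} to bound $(Q^{-1}\nabla f + v_k)^{\top} Q v_k$ by $M_q\norm{h(x_k)}$, the coercivity $v_k^{\top} Q v_k \geq \norm{v_k}^2/C_q$ from \Cref{hyp:riem_proj}, and the observation that the constraint-decrease bound \eqref{eq:h_disc_decr} carries over unchanged because $\nabla h^{\top}\GOF = -\nabla h^{\top}\nabla h A h$. No gaps.
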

\begin{proof}
    Following the same path as in the proof of Proposition~\ref{prop:lyap_stoch}, we obtain a generalization of Equation~\eqref{eq:f_disc_decr}:
      \begin{equation*}
    \begin{split}
          \bbE_k[f(x_{k+1})] - f(x_k) &\leq \gamma_k \nabla f(x_k)^{\top} v_k + \frac{L_f}{2} \gamma_k^2 \bbE_k[\norm{v_k + \eta_{k+1}}]^2  \\
          &\leq \gamma_k  (Q^{-1}(x_k) \nabla f(x_k))^{\top} Q(x_k) v_k  + \frac{L_f}{2} \gamma_k^2 (\norm{v_k}^2 +\sigma^2)\\
          &\leq - \gamma_k v_k^{\top} Q(x_k) v_k+ (Q^{-1} (x_k) \nabla f(x_k) + v_k)^{\top}Q(x_k) v_k + \frac{L_f}{2} \gamma_k^2 (\norm{v_k}^2 +\sigma^2) \\
          &\leq - \gamma_k \norm{v_k}^2 \left( \frac{1}{C_q} - \frac{L_f}{2} \gamma_k \right) + M_q \norm{h(x_k)}  + \frac{L_f}{2}\gamma_k^2 \sigma^2 \, ,
    \end{split}
  \end{equation*}
  where we have used Lemma~\ref{lm:f+v_riem} for the third and \Cref{hyp:riem_proj} for the fourth inequality.
Since Equation~\eqref{eq:h_disc_decr} remains unchanged, we obtain the claimed inequality.
\end{proof}

The end of the proof of Theorem~\ref{th:stief_rates} then follows, \emph{mutatis mutatis}, the one of Theorem~\ref{th:gen_rates}, upon replacing Proposition~\ref{prop:lyap_stoch} with Proposition~\ref{prop:lyap_sto_geom}.

\end{document}